\theoremstyle{plain}
\DeclareMathAlphabet{\mathpzc}{OT1}{pzc}{m}{it}
\newtheorem{theorem}{Theorem}
\newtheorem{prop}[theorem]{Proposition}
\newtheorem{cor}[theorem]{Corollary}
\newtheorem{lem}[theorem]{Lemma}
\newtheorem{que}[theorem]{Question}
\newtheorem{exa}[theorem]{Example}
\newcommand{\hocolim}{\operatornamewithlimits{\mathrm{hocolim}}}
\newcommand{\colim}{\operatornamewithlimits{\mathrm{colim}}}
\newcommand*{\email}[1]{%
    \normalsize\href{mailto:#1}{#1}\par
    }
\title{The Forest Filtration of a Graph}
\author{Andr\'es Carnero Bravo\;\orcidlink{0009-0001-0221-5908}}
\affil{Centro de Ciencias Matemáticas, UNAM\\ \email{carnero@matmor.unam.mx}}
\begin{document}
\maketitle
\begin{abstract}
    Given a graph $G$, we define a filtration of simplicial complexes associated to $G$, 
    $\mathcal{F}_0(G)\subseteq\mathcal{F}_1(G)\subseteq\cdots\subseteq\mathcal{F}_\infty(G)$ 
    where the first complex is the independence complex and the last the complex is formed by the 
    acyclic sets of vertices. We prove some properties of this filtration and we calculate 
    the homotopy type for various families of graphs. We give an upper
    bound for the decycling number and generalizations of this parameter using the dimensions of the rational 
    cohomology groups of these complexes. We also derive an upper bound for the Fibonacci numbers of ternary graphs.
\end{abstract}
\textbf{\textit{Keywords:}} Graph complexes, homotopy type, decycling number\\
\textbf{\textit{Mathematics Subject Classification:}} 05E45, 05C05, 55P15, 05C05
\tableofcontents
\textbf{Acknowledgments.} This work was supported by UNAM Posdoctoral Program (POSDOC).
The author wishes to thank Omar Antol\'in-Camarena for his comments and suggestions which improved this paper.
\section{Introduction}
Given a simple graph $G$, the sets of edges that induce acyclic graphs form a simplicial complex, 
the independence complex of the graphic matroid associated to the graph. This complex is pure of dimension $n-k(G)-1$ and 
has the homotopy type of the wedge of $T_G(0,1)$ spheres, where $T_G(x,y)$ is the Tutte polynomial of the 
graph (see \citep{bjornermatroidshella}) and $k(G)$ is the number of connected components of the graph. 
If instead of edges we take vertices, we get a complex $\mathcal{F}_\infty(G)$ which does 
not have to be pure and can, for example, have the homotopy type of a wedge of spheres of  different dimension, as it does for the complex associated to the $k$-complete 
multipartite graph for $k\geq3$. This complex, defined in \citep{tesiszuffi} but not studied, 
is part of a filtration of complexes associated to the graph:
\begin{equation}\tag{$i$}\label{filt}
\mathcal{F}_0(G)\subseteq\mathcal{F}_1(G)\subseteq\mathcal{F}_2(G)\subseteq\cdots\subseteq\mathcal{F}_\infty(G),
\end{equation}
where for each $d$ we take as simplices the set of vertices which induce acyclic graphs of maximum degree at most $d$. It is worth to say 
something about the first two complexes of the filtration which have been studied before. The case $d=0$ is 
the independence complex which has been studied extensively 
\citep{adamsplit,barmak,Ehrenborg_2006,engstrom09,haxell} and, for example, in \citep{Gonz_lez_Meneses_2017,Przytycki_2017} 
it was used to study the extreme Khovanov (co)homology of link diagrams. Because of the difficulty of calculating homotopy types of 
independence complexes most of the work has been focused on specific families of graphs  
\citep{indcomplcartprod,Bousquet_M_lou_2007,Braun_2017,homotopygoyal,Iriye_2012,Jonsson_2009}.
The case $d=1$ is also a member of a different family generalizing the independence complex, called the $k$-independence complexes. Our 
$\mathcal{F}_1(G)$ is isomorphic to the $2$-independence complex, which features in \citep{salvetti2015,salvetti2018}, where it was used to 
study the local homology of Artin groups. 
In \citep{taylan}, a simplicial complex $\mathcal{D}(G,H)$ is defined for any graph $G$ and any family of graphs 
$\mathcal{H}$, whose simplices are the subsets of vertices of $G$ that do not induce a graph in $\mathcal{H}$. From this point of view, if 
$F_i$ is the family of forests of maximum degree $i$, then $\mathcal{D}(G,F_{d+1})$  is precisely $\mathcal{F}_i(G)$, but these families 
were not studied in that article.
To the best of our knowledge, the cases $2\leq d<\infty$ have not been studied before.

We will determine some basic properties of the filtration (\ref{filt}) and study the homotopy type for some 
families (paths, cycles, double stars and cactus graphs) and some graph products, in particular we will study the homotopy type 
for graph joins which will allow us to calculate the homotopy type for some complete multipartite graphs and wheels. Using rational 
cohomology, we will give upper bounds for some combinatorial parameters of the graph;
two of these parameters have already been extensively studied: 
the vertex cover number \citep{bazzi,dinur,jinminvercov,matsudomvercov23,matsudomvercov25} and the decycling number 
\citep{bau,beineke,engstromupperbounds,erdosindcirc,ren}. The other parameters are generalizations of these two in the context of the 
corresponding complex of the filtration. The bound for the vertex cover number will allow us to get 
an upper bound for the Fibonacci numbers of ternary graphs (graphs without induced cycles of length divisible by $3$); this bound will be 
given in terms of combinatorial parameters.
The Fibonacci number of a graph is the number of its independent sets, and it has been extensively studied  
\citep{kirschfiboii,kirschfiboiii,linfibo,perdersenunpp,prodingerfibo,startekfibo}-- it is worth mentioning that this number is also called 
the Merrifield-Simmons index in mathematical chemistry \citep{deitamethcommerr,linotemerrsimind,wagnermaximinhoymerr,wagnerexttree}.

In Section $2$ we recall basic definitions and results from graph theory and algebraic topology. 
There we define and recall some properties of homotopy colimits, which are one of our main tools. In Section $3$ we define 
the filtration (\ref{filt}) and prove some of its main properties. Section $4$  is devoted to the computation of the homotopy type of the 
complexes of the filtration. In the last section, we provide upper bounds for the decycling number, the vertex cover number, and some 
generalizations of these two.
\section{Preliminaries}
We only consider simple graphs, so no loops or multiedges are allowed. Given a graph $G$, $V(G)$ will be the set of vertices 
and $E(G)$ the set of edges. For $S\subseteq V(G)$, $G[S]$ is the \textit{graph induced by $S$}, this is the graph 
with vertex set $S$, and two vertices are adjacent if and only if they are adjacent in $G$. For a vertex $v$, 
$N_G(v)=\{u\in V(G):\;uv\in E(G)\}$ is its \textit{open neighborhood}, we omit the subindex $G$ if there is no risk of confusion. 
The\textit{degree} of a vertex will be denoted $d_G(v)=|N_G(v)|$. The \textit{maximum} and \textit{minimum degrees} will be denoted 
$\Delta(G)$ and $\delta(G)$ respectively.  
Given a graph $G$, its \textit{girth} $g(G)$ is the length of the smallest cycle in $G$, if there is no cycle in $G$ we take $g(G)=\infty$. 
If $G$ has no cycles, we say it is a \textit{forest}. A \textit{tree} is a connected forest.

Given a graph $G$ and $d$ a non-negative integer, we define 
$$t_d(G)=\max_{S\subseteq V(G)}\{|S|:\;G[S] \mbox{ is a forest of maximum degree at most }d\}$$
and 
$$\nabla_d(G)=\min_{S\subseteq V(G)}\{|S|:\;G-S\mbox{ is a forest of maximum degree at most d}\}$$
By removing the bound on the maximum degree, we define numbers $t(G)$ and $\nabla(G)$ in a similar way. From the definition 
it is easy to see that if $G$ has order $n$, then $\nabla_d(G)=n-t_d(G)$. If $d=0$, then $t_0(G)$ and $\nabla_0(G)$ are the 
\textit{independence number} and the \textit{vertex covering number}, respectively. We will denote the independence number by $\alpha(G)$ 
and the vertex covering number by $\tau(G)$.
The number $\nabla(G)$ is known as the \textit{decycling number} or as the \textit{vertex feed-back number}, with a vertex set $S$ such 
that $G-S$ is a forest called a \textit{decycling set}.

Given two graphs $G$ and $H$, we consider the following two graphs whose vertex sets are $V(G)\times V(H)$:
\begin{enumerate}
    \item The \textit{cartesian product} $G\oblong H$, where $\{(u_1,v_1)(u_2,v_2)\}$ is an edge if $u_1=u_2$ and $v_1v_2\in E(H)$ or 
    $u_1u_2\in E(G)$ and $v_1=v_2$.
    \item The \textit{categorical product} $G\times H$, where $\{(u_1,v_1)(u_2,v_2)\}$ is an edge if $u_1u_2\in E(G)$ and $v_1v_2\in E(H)$.
\end{enumerate}

For all the graph definitions not stated here we follow \citep{graphsanddigraphs}.

A \textit{simplicial complex} $K$ on the vertex set $V$ is a family of subsets, called \textit{simplices}, 
such that if $\sigma$ is in $K$, any subset $\tau\subseteq\sigma$ is also in $K$. Notice that we consider the empty set as a 
simplex and we are allowing phantom vertices, \textit{i.e.} we allow to exist vertices $v$ in $V$ such that $\{v\}$ 
is not a simplex of $K$. 
We say that a simplex $\sigma$ has \textit{dimension} $|\sigma|-1$ and the maximum of the dimension is the dimension of the complex which 
we will denote as $\mathrm{dim}(K)$. 
The \textit{$q$-skeleton} of a complex $K$, denoted $\mathrm{sk}_qK$, is the subcomplex of all simplices with at most $q+1$ elements. 
We will write $f_r(K)$ for the number of simplices of dimension $r$ in $K$. Notice that $f_{-1}(K)=1$ if $K\neq\emptyset$.

Given a simplex $\sigma\in K$, its \textit{link} is the simplicial complex 
$\mathrm{lk}(\sigma)=\{\tau\subseteq V\colon\tau\cap\sigma=\emptyset \;\;\mbox{and}\;\;\tau\cup\sigma\in K\}$ and its \textit{star} is
$\mathrm{st}(\sigma)=\{\tau\in K:\tau\cup\sigma\in K\}$. For a vertex we will write $\mathrm{lk}(v)$ and $\mathrm{st}(v)$ 
instead of $\mathrm{lk}(\{v\})$ or $\mathrm{st}(\{v\})$.

Given two simplicial complexes $K$ and $L$ with disjoint vertex sets, their \textit{join} 
is the simplicial complex $K*L=\{\sigma\cup\tau:\;\sigma\in K\;\mbox{ and }\;\tau\in L\}$, this definition depends on the fact that the 
empty set is a simplex of non-empty simplicial complexes. The \textit{cone} of  
$K$ is the simplicial complex $C(K)$ obtained as the join $\{\{v_0\}\}*K$ with $v_0$ not a vertex of $K$, we call this vertex \textit{apex 
vertex}.

Given a set family $\mathcal{U}$, its \textit{nerve} is the simplicial complex $\mathcal{N}(\mathcal{U})$ with vertex set 
$\mathcal{U}$, and $U_1,\dots,U_l$ form a simplex if $U_1\cap\cdots\cap U_l\neq\emptyset$.  

\begin{theorem}[Nerve Theorem, see {\citep[Theorem 10.6]{bjornertopmeth}}]
Let $K$ be a simplicial complex and $\{U_1,\dots,U_r\}$ a family of subcomplexes such that $K=U_1\cup\cdots\cup U_r$. If every nonempty 
finite intersection $U_{i_1}\cap\cdots\cap U_{i_l}$ is contractible. Then $K\simeq\mathcal{N}(\mathcal{U})$
\end{theorem}

We will not distinguish between a complex and its geometric realization. All the homology and cohomology groups will be with 
integer coefficients until the last section, where the coefficients will be the rational numbers.

\begin{theorem}[Whitehead's theorem, see {\citep[Corollary 4.33]{hatcher}}]\label{whiteheadhomologia}
If $X$ and $Y$ are simply connected CW-complexes and there is a continuous map $f\colon X\longrightarrow Y$ such 
that $f_*:H_n(X)\longrightarrow H_n(Y)$ is an isomorphism for each $n$, then $f$ is an homotopy equivalence.
\end{theorem}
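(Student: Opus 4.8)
The plan is to reduce the statement to the case of an inclusion and then combine the long exact sequence of a pair, the relative Hurewicz theorem, and the homotopy-theoretic form of Whitehead's theorem.

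First I would replace $f$ by an inclusion. By cellular approximation we may assume $f$ is cellular, so its mapping cylinder $M_f$ is again a CW complex, the canonical projection $p\colon M_f\to Y$ is a homotopy equivalence, and $f$ is homotopic to the composite $M_f\xrightarrow{p}Y$ with the inclusion $j\colon X\hookrightarrow M_f$. Since $p$ is a homotopy equivalence it suffices to prove that $j$ is one. Note $M_f\simeq Y$ is simply connected, so $(M_f,X)$ is a CW pair with both spaces simply connected. From the long exact homology sequence of this pair and the hypothesis that $f_*$, hence $j_*$, is an isomorphism on $H_n$ for every $n$, I conclude $H_n(M_f,X)=0$ for all $n$.

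Next I would run an induction using the relative Hurewicz theorem. Because $\pi_1(M_f)=0$ and $X$ is path connected, the exact sequence of homotopy groups of the pair gives $\pi_1(M_f,X)=0$, so $(M_f,X)$ is $1$-connected. Assuming inductively that $\pi_i(M_f,X)=0$ for $i\le n$, the relative Hurewicz theorem (applicable since $X$ is simply connected) yields an isomorphism $\pi_{n+1}(M_f,X)\cong H_{n+1}(M_f,X)=0$. Hence $\pi_n(M_f,X)=0$ for every $n$, i.e.\ $j$ induces isomorphisms on all homotopy groups and is a weak homotopy equivalence.

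Finally, since $X$ and $M_f$ are CW complexes, the homotopy version of Whitehead's theorem — equivalently the compression lemma, which builds a deformation retraction of $M_f$ onto $X$ from the vanishing of all relative homotopy groups — shows that $j$ is a homotopy equivalence; composing with $p$ gives that $f$ is a homotopy equivalence. The main technical point, and the place where the simple-connectivity hypothesis is genuinely used, is the invocation of the relative Hurewicz theorem: without $1$-connectedness of the pair one only controls the $\pi_1$-coinvariants of the first nonvanishing relative homotopy group, which is insufficient to start and propagate the induction. In the text this theorem is quoted from \citep{hatcher}, so in practice one simply cites it; the sketch above indicates how it is assembled.
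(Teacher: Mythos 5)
Your argument is correct, and it is essentially the proof that the cited source gives: the paper itself does not prove this statement but only quotes it as \citep[Corollary 4.33]{hatcher}, and your reduction to the inclusion $X\hookrightarrow M_f$, the vanishing of $H_n(M_f,X)$, the induction via the relative Hurewicz theorem (where simple connectivity of $X$ is exactly what is needed), and the final appeal to the compression-lemma form of Whitehead's theorem is the standard argument found there. Nothing further is required.
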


Given a complex $X$ on $n$ vertices, its \textit{Alexander Dual} is the complex 
$$X^*=\{\sigma\subseteq V(X)\colon\;V(X)-\sigma\notin X\}.$$

\begin{theorem}\label{dualidadalexander}(see \citep{bjorneralexander})
Let $X$ be a simplicial complex with $n$ vertices, then
$$\tilde{H}_i(X)\cong\tilde{H}^{n-i-3}(X^*)$$
\end{theorem}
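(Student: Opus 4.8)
The plan is to deduce this combinatorial identity from classical Alexander duality on a sphere. Write $V=V(X)$ with $|V|=n$ and view $X$ as a subcomplex of $\partial\Delta$, the boundary of the full simplex on $V$, which is a triangulation of $S^{n-2}$. One should keep an eye on the degenerate cases (the void complex, $X=\{\emptyset\}$, $X=\Delta$), but with the standard reduced-(co)homology conventions — under which $\tilde H^{-1}$ of the empty complex is $\mathbb Z$ — these cause no trouble, so I would treat them by direct inspection and otherwise assume $X$ is a genuine proper subcomplex.

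The key step is to identify the homotopy type of the complement $\partial\Delta\setminus|X|$. I would use the standard deformation retraction of the complement of a subcomplex of $\partial\Delta$ onto a subcomplex of the barycentric subdivision $\mathrm{sd}(\partial\Delta)$: a point lying in the open simplex spanned by barycenters $b_{\sigma_0}<\cdots<b_{\sigma_k}$ (a chain of nonempty proper subsets of $V$) fails to lie in $|X|$ exactly when $\sigma_k\notin X$; since $X$ is downward closed, the set of indices $j$ with $\sigma_j\notin X$ is then an up-set $\{m,\dots,k\}$, and pushing linearly toward the face carried by those barycenters defines a deformation retraction of $\partial\Delta\setminus|X|$ onto $\hat X$, the order complex of the poset $\{\sigma:\emptyset\neq\sigma\subsetneq V,\ \sigma\notin X\}$. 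Now complementation $\sigma\mapsto V\setminus\sigma$ is an order-reversing bijection from this poset onto the poset of nonempty proper faces of $X^*$: by definition $V\setminus\sigma\in X^*$ iff $\sigma\notin X$, and the nonempty/proper conditions match. Hence $\hat X\cong\mathrm{sd}(X^*)$, so $\partial\Delta\setminus|X|\simeq X^*$.

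Finally I would invoke Alexander duality in $S^m$ for a subcomplex $A$ of a triangulated sphere: $\tilde H_i(A)\cong\tilde H^{\,m-i-1}(S^m\setminus A)$. Taking $m=n-2$ and $A=|X|$ yields $\tilde H_i(X)\cong\tilde H^{\,n-i-3}(S^{n-2}\setminus X)\cong\tilde H^{\,n-i-3}(X^*)$, which is the assertion.

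The step I expect to be the main obstacle is the middle one: checking cleanly that $\partial\Delta\setminus|X|$ deformation retracts onto $\hat X$, and — just as important — bookkeeping the empty and top faces so that the degree shift comes out exactly $n-i-3$ with no off-by-one error and so that all degenerate complexes are consistent. There is a purely algebraic alternative that bypasses point-set topology: the long exact sequence of the pair $(\Delta,X)$ together with contractibility of $\Delta$ gives $\tilde H_i(X)\cong H_{i+1}(\Delta,X)$; the relative chain group $C_{i+1}(\Delta,X)$ is free on the $(i+2)$-subsets of $V$ that are \emph{not} faces of $X$, and $\sigma\mapsto V\setminus\sigma$ identifies it, up to a single global sign on the differential, with the cochain group $C^{\,n-i-3}(X^*)$, since the faces of $X^*$ are precisely the complements of the non-faces of $X$. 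Verifying that the simplicial boundary corresponds to the simplicial coboundary under this map then produces the isomorphism directly on (co)homology. I would present the topological argument for intuition, but note that it is this chain-level duality that is really doing the work.
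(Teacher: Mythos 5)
Your proof is correct. The paper does not prove this statement at all --- it is quoted with a citation to Bj\"orner --- and your two arguments (the deformation retraction of $\partial\Delta\setminus|X|$ onto the order complex of non-faces, identified with $\mathrm{sd}(X^*)$ via complementation, and the chain-level identification $C_{i+1}(\Delta,X)\cong C^{\,n-i-3}(X^*)$) are exactly the standard proofs of combinatorial Alexander duality found in that literature, so there is nothing in the paper to diverge from. The only point worth polishing is the appeal to classical duality in the form $\tilde H_i(A)\cong\tilde H^{\,m-i-1}(S^m\setminus A)$: textbook statements (e.g.\ Hatcher 3.44) are usually phrased with homology of the complement and cohomology of the compact set, so either justify the swapped form for compact polyhedra or apply the textbook form to the complementary subcomplex $\hat X$, whose complement deformation retracts onto $|X|$.
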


We will use the following folklore result (see \citep{MR4715353} for a proof).
\begin{theorem}\label{cwsimplcon}
If $X$ is a simply connected CW-complex such that $\tilde{H}_q(X)\cong\mathbb{Z}^a$ for some $q\geq2$ and the rest of the homology groups 
are trivial, then $X\simeq\displaystyle\bigvee_a\mathbb{S}^q$.
\end{theorem}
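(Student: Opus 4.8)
The plan is to exhibit a map from a wedge of $q$-spheres into $X$ that induces an isomorphism on all homology groups, and then to invoke Whitehead's theorem (Theorem~\ref{whiteheadhomologia}). Since $X$ is simply connected and $\tilde H_i(X)=0$ for every $i<q$ (these groups being among the ones assumed trivial), the Hurewicz theorem (see \citep{hatcher}) shows that $X$ is $(q-1)$-connected and that the Hurewicz homomorphism $\pi_q(X)\to H_q(X)\cong\mathbb{Z}^a$ is an isomorphism; this is what makes it possible to represent the degree-$q$ homology classes of $X$ by maps out of $\mathbb{S}^q$.

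First I would fix a basis $\{e_i\}_{i\in I}$ of the free abelian group $H_q(X)\cong\mathbb{Z}^a$, with $|I|=a$, and use the Hurewicz isomorphism to choose, for each $i\in I$, a based map $\varphi_i\colon\mathbb{S}^q\to X$ with $\varphi_{i*}[\mathbb{S}^q]=e_i$. These glue to a single based map
$$f\colon W:=\bigvee_{i\in I}\mathbb{S}^q\longrightarrow X,\qquad f\big|_{\mathbb{S}^q_i}=\varphi_i,$$
where $W$ carries its standard CW structure (one $0$-cell and one $q$-cell for each $i\in I$). In particular $W$ is a CW-complex, and since $q\geq2$ it is simply connected, just like $X$.

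Next I would verify that $f_*\colon H_n(W)\to H_n(X)$ is an isomorphism for all $n$. In degree $0$ this is immediate, both spaces being path connected. In degree $q$, $H_q(W)$ is free with basis the fundamental classes $[\mathbb{S}^q_i]$ of the wedge summands, and $f_*[\mathbb{S}^q_i]=e_i$ by construction, so $f_*$ sends a basis to a basis and is an isomorphism. In every other degree $n$ one has $\tilde H_n(W)=0$, since the reduced homology of a wedge of $q$-spheres is concentrated in degree $q$, and $\tilde H_n(X)=0$ by hypothesis, so $f_*$ is trivially an isomorphism there. As $W$ and $X$ are simply connected CW-complexes and $f_*$ is an isomorphism in every degree, Theorem~\ref{whiteheadhomologia} yields that $f$ is a homotopy equivalence, which is exactly the assertion $X\simeq\bigvee_a\mathbb{S}^q$.

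The one step that needs genuine care — and essentially the only place where more than formal bookkeeping is used — is the appeal to Hurewicz: one must confirm that simple connectivity of $X$ together with the vanishing of $\tilde H_i(X)$ for $i<q$ forces $X$ to be $(q-1)$-connected, so that $\pi_q(X)\cong H_q(X)$ and the chosen classes are spherical. Everything downstream of that is routine. An alternative route avoiding Hurewicz is to kill the homology of $X$ by attaching cells degree by degree, producing a CW model with exactly the homology of a wedge of $q$-spheres together with a homology isomorphism to $X$; but this is longer and rests on the same essential input.
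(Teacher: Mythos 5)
Your proof is correct. The paper itself gives no argument for Theorem \ref{cwsimplcon}, stating it only as a well-known fact, so there is nothing to compare against; your Hurewicz-plus-Whitehead argument is the standard justification, and each step — deducing that $X$ is $(q-1)$-connected from simple connectivity and the vanishing of $\tilde{H}_i(X)$ for $i<q$, choosing spherical representatives of a basis of $H_q(X)\cong\mathbb{Z}^a$ via the Hurewicz isomorphism, assembling them into a map $f\colon\bigvee_a\mathbb{S}^q\longrightarrow X$ that induces isomorphisms on all homology groups, and concluding with Theorem \ref{whiteheadhomologia} — is carried out correctly.
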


In the same vein of last theorem, we will make use of the following result.

\begin{theorem}\citep[Example 4C.2]{hatcher}\label{gradconse}
If $X$ is an CW-complex simply connected such that $\tilde{H}_q(X)\cong\mathbb{Z}^a$, $\tilde{H}_{q+1}(X)\cong\mathbb{Z}^b$ for some 
$q\geq2$ and the rest of the homology groups are trivial, then $X\simeq\displaystyle\bigvee_a\mathbb{S}^q\vee\bigvee_b\mathbb{S}^{q+1}$.
\end{theorem}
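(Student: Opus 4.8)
The plan is to mimic and extend the argument behind Theorem \ref{cwsimplcon}: build a wedge of spheres $Y$ with exactly the prescribed homology together with a map into $X$ that induces an isomorphism on homology in every degree, and then finish with Whitehead's theorem (Theorem \ref{whiteheadhomologia}). First I would note that, since $X$ is simply connected with $\tilde{H}_i(X)=0$ for $i<q$, the Hurewicz theorem shows $X$ is $(q-1)$-connected and that the Hurewicz map $\pi_q(X)\to H_q(X)\cong\mathbb{Z}^a$ is an isomorphism; the hypothesis $q\geq2$ will be used throughout, both to keep all spaces in sight simply connected (needed for Whitehead) and to apply the relative Hurewicz theorem below.

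Next I would choose maps $\mathbb{S}^q\to X$ representing a basis of $\pi_q(X)\cong\mathbb{Z}^a$; together they give $f\colon W\to X$ with $W=\bigvee_a\mathbb{S}^q$, inducing an isomorphism on $H_q$ (and trivially on lower reduced homology). Replacing $X$ by the mapping cylinder of $f$, I regard $W$ as a subcomplex. Using that $X$ and $W$ are $(q-1)$-connected and that $\pi_q(W)\twoheadrightarrow\pi_q(X)$, the homotopy long exact sequence of $(X,W)$ shows $(X,W)$ is $q$-connected, and the homology long exact sequence gives $H_{q+1}(X,W)\cong H_{q+1}(X)\cong\mathbb{Z}^b$ (here $H_{q+1}(W)=0$ and $H_q(W)\to H_q(X)$ is injective). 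By the relative Hurewicz theorem, $\pi_{q+1}(X,W)\cong H_{q+1}(X,W)\cong\mathbb{Z}^b$; chasing $\pi_{q+1}(X)\to\pi_{q+1}(X,W)\to\pi_q(W)\to\pi_q(X)$ (last map injective) shows $\pi_{q+1}(X)\to\pi_{q+1}(X,W)$ is onto, and naturality of the Hurewicz maps then forces $\pi_{q+1}(X)\to H_{q+1}(X)$ to be surjective. Hence I can pick maps $g_1,\dots,g_b\colon\mathbb{S}^{q+1}\to X$ whose Hurewicz images form a basis of $H_{q+1}(X)$.

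Assembling $f$ and the $g_i$ yields $g\colon Y:=W\vee\bigvee_b\mathbb{S}^{q+1}\longrightarrow X$. Because $q\geq2$, the reduced homology of $Y$ is $\mathbb{Z}^a$ in degree $q$, $\mathbb{Z}^b$ in degree $q+1$, and zero otherwise, the classes coming respectively from the $\mathbb{S}^q$- and $\mathbb{S}^{q+1}$-summands; by construction $g_*$ carries a basis to a basis in degrees $q$ and $q+1$ and is trivially an isomorphism in every other degree. Since $X$ and $Y$ are simply connected CW-complexes and $g_*$ is an isomorphism on all homology groups, Whitehead's theorem (Theorem \ref{whiteheadhomologia}) shows $g$ is a homotopy equivalence, i.e. $X\simeq\bigvee_a\mathbb{S}^q\vee\bigvee_b\mathbb{S}^{q+1}$. (Theorem \ref{cwsimplcon} is the case $b=0$; one could instead identify the cofiber $X/W$ with $\bigvee_b\mathbb{S}^{q+1}$ via Theorem \ref{cwsimplcon} and then split the cofiber sequence $W\to X\to X/W$, but splitting it cleanly when $q=2$ is more delicate, so I prefer the direct route.)

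The main obstacle is the middle step: producing the maps $\mathbb{S}^{q+1}\to X$ hitting a basis of $H_{q+1}(X)$, equivalently showing $\pi_{q+1}(X)\to H_{q+1}(X)$ is surjective. This does not follow from purely homological manipulation and is exactly where $q\geq2$ is genuinely needed (simple connectivity of $W$ and applicability of the relative Hurewicz theorem to $(X,W)$); once that surjectivity is in hand, the rest is bookkeeping with long exact sequences, the computation of the homology of a wedge of spheres, and an appeal to Whitehead.
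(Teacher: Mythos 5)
Your argument is correct: the Hurewicz step gives the $\mathbb{S}^q$-wedge, the relative Hurewicz/long-exact-sequence chase correctly shows $\pi_{q+1}(X)\to H_{q+1}(X)$ is onto so the $\mathbb{S}^{q+1}$-classes can be realized by maps of spheres, and the homology Whitehead theorem (Theorem \ref{whiteheadhomologia}) finishes the proof. Note that the paper gives no proof of this statement at all, citing it to Hatcher's Example 4C.2; your construction is essentially the standard homology-decomposition argument underlying that reference, specialized to free homology concentrated in degrees $q$ and $q+1$, so there is no substantive divergence to report.
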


Now we give the basic results of homotopy colimits which will be our main tool. 
For a positive integer $n$, we take $\underline{n}=\{1,\dots,n\}$ and denote its power set as $\mathcal{P}(\underline{n})$. We define
$\mathcal{P}_1(\underline{n})=\mathcal{P}(\underline{n})-\{\underline{n}\}$.
A \textit{punctured $n$-cube} $\mathcal{X}$ consists of:
\begin{itemize}
    \item A topological space $\mathcal{X}(S)$ for each $S$ in $\mathcal{P}_1(\underline{n})$.
    \item A continuous function $f_{S\subseteq T}\colon\mathcal{X}(S)\longrightarrow\mathcal{X}(T)$ for each $S\subseteq T$.
\end{itemize}
These functions are such that $f_{S\subseteq S}=1_{\mathcal{X}(S)}$ and for any $R\subseteq S\subseteq T$ the following diagram commutes
\begin{equation*}
    \xymatrix{
    \mathcal{X}(R) \ar@{->}[r]^{f_{R\subseteq S}} \ar@{->}[dr]_{f_{R\subseteq T}}& \mathcal{X}(S) \ar@{->}[d]^{f_{S\subseteq T}}\\
     & \mathcal{X}(T).
    }
\end{equation*}  
A punctured $n$-cube of interest for a given topological space $X$ is the constant punctured cube $\mathcal{C}_X$, 
where $\mathcal{C}_X(S)=X$ for any set $S$ and all the functions are $1_X$.
The \textit{colimit} of a punctured $n$-cube is the space
$$\colim(\mathcal{X})=\bigsqcup_{S\in\mathcal{P}_1(\underline{n})}\mathcal{X}(S)/\sim$$
where $\sim$ is the equivalence relation generated by $f_{S\subseteq T_1}(x_S)\sim f_{S\subseteq T_2}(x_S)$ for $T_1,T_2$ and $S\subseteq T_1,T_2$. From the definition
is clear that $\colim(\mathcal{C}_X)\cong X$ for any $X$.

For any $n\geq1$ and $S$ in $\mathcal{P}_1(\underline{n})$ we take:
$$\Delta(S)=\left\lbrace(t_1,t_2,\dots,t_n)\in \mathbb{R}^n\colon\;\sum_{i=1}^nt_i=1\mbox{ and }t_i=0\mbox{ for all }i\in S\right\rbrace$$
and $d_{S\subseteq T}\colon \Delta(T)\longrightarrow\Delta(S)$ the corresponding inclusion. Now, for a punctured 
$n$-cube $\mathcal{X}$, its \textit{homotopy colimit} is 
$$\hocolim(\mathcal{X})=\bigsqcup_{S\in\mathcal{P}_1(\underline{n})}\mathcal{X}(S)\times\Delta(S)/\sim$$
where $\sim$ is the equivalence relation generated by $(x_S,d_{S\subseteq T}(t))\sim(f_{S\subseteq T}(x_S),t)$. When $n=2$, 
we will specify the punctured $2$-cube as the diagram
\begin{equation*}
    \xymatrix{
    \mathcal{D}\colon & X\ar@{<-}[r]^{f} & Z \ar@{->}[r]^{g} & Y
    }
\end{equation*}
and its homotopy colimit is called the \textit{homotopy pushout}. 

There is a recursive way to compute homotopy colimits of punctured $n$-cubes. Given a punctured $n$-cube $\mathcal{X}$ for $n\geq2$ and defining the punctured $(n-1)$-cubes
$\mathcal{X}_1(S)=\mathcal{X}(S)$ and $\mathcal{X}_2(S)=\mathcal{X}(S\cup\{n\})$ 
for $S\subsetneq\underline{n-1}$,
we have that (Lemma 5.7.6 \citep{cubicalhomotopy})
$$\hocolim(\mathcal{X})\cong \hocolim\left(\mathcal{X}\left(\underline{n-1}\right)\longleftarrow \hocolim(\mathcal{X}_1)\longrightarrow \hocolim(\mathcal{X}_2)\right).$$

If for all  $S\subsetneq\underline{n}$ the map 
$$\colim_{T\subsetneq S}X_T \longrightarrow X_S$$
is a cofibration, we say the punctured cube is \textit{cofibrant}.  
If $X_1,\dots,X_n$ are CW-complexes such that their intersections are subcomplexes, and we take the punctured cube given by the 
intersections and the inclusions between them, then the punctured cube is cofibrant and 
$\hocolim(\mathcal{X})\simeq \colim(\mathcal{X})$ (Proposition 5.8.25 \citep{cubicalhomotopy}). We will use several times the following 
known fact about homotopy pushouts (see \citep{indcomplcartprod} for a proof).

\begin{lem}\label{homocolimpegado}
Let $X,Y,Z$ be spaces with maps $f\colon Z\longrightarrow X$ and $g\colon Z\longrightarrow Y$ such that both maps are null-homotopic. Then
$$\hocolim\left(\mathcal{S}\right)\simeq X\vee Y\vee \Sigma Z$$
where 
\begin{equation*}
    \xymatrix{
    \mathcal{S}\colon & Y \ar@{<-}[r]^{g} & Z \ar@{->}[r]^{f} & X
    }
\end{equation*}
\end{lem}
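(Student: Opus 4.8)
The plan is to reduce to constant attaching maps and then recognise the resulting double mapping cylinder. Recall that $\hocolim(\mathcal{S})$ is modelled by the double mapping cylinder
$$D(f,g)=\Big(X\;\sqcup\;Z\times[0,1]\;\sqcup\;Y\Big)\Big/\big((z,0)\sim f(z),\;(z,1)\sim g(z)\big).$$
Because $f\colon Z\to X$ and $g\colon Z\to Y$ are null-homotopic, choose homotopies from them to the constant maps $c_{x_0}$ and $c_{y_0}$ at points $x_0\in X$, $y_0\in Y$. First I would invoke the standard fact that the homotopy pushout of a span does not change, up to homotopy equivalence, when either of its two maps is replaced by a homotopic one (a consequence of the homotopy-invariance of $\hocolim$; see \citep{cubicalhomotopy}), so that $\hocolim(\mathcal{S})\simeq D(c_{x_0},c_{y_0})$.

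Next I would identify $D(c_{x_0},c_{y_0})$ geometrically. In this quotient all of $Z\times\{0\}$ is crushed to the single point $x_0$ and all of $Z\times\{1\}$ to the single point $y_0$, so $Z\times[0,1]$ becomes the (unreduced) suspension $\Sigma Z$ with its two cone points identified with $x_0\in X$ and $y_0\in Y$; thus $D(c_{x_0},c_{y_0})$ is exactly the space built from $\Sigma Z$ by attaching $X$ at one cone point and $Y$ at the other. To convert this into a wedge, fix a vertex $z_0$ of $Z$ and collapse the meridian arc $\{z_0\}\times[0,1]$: it is a contractible subcomplex, so by the standard fact that collapsing such a subcomplex is a homotopy equivalence (\citep[Proposition 0.17]{hatcher}) the quotient has the same homotopy type, and in the quotient $x_0$, $y_0$ and the whole meridian have been identified to a single point at which $X$, $Y$ and a copy of the suspension of $Z$ (reduced at $z_0$, hence homotopy equivalent to $\Sigma Z$) meet along that one point. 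This yields $\hocolim(\mathcal{S})\simeq X\vee Y\vee\Sigma Z$.

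The one step I expect to be a genuine obstacle is the first: making precise that replacing $f$ and $g$ by the constant maps leaves the homotopy pushout unchanged. In full generality this requires a little care, but here every space in sight is the realisation of a simplicial complex, hence a CW-complex, so the inclusion of a point is a cofibration and the cubical-homotopy machinery recalled above applies directly; alternatively one can manufacture an explicit homotopy equivalence $D(f,g)\to D(c_{x_0},c_{y_0})$ out of the two chosen null-homotopies. The rest is a routine unwinding of definitions, the only caveat being that one should tacitly assume $Z\neq\emptyset$ (in the degenerate case $\hocolim(\mathcal{S})=X\sqcup Y$ and the stated formula is to be read with that case set aside), which does not affect any of the applications.
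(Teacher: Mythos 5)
The paper itself states this lemma without proof, treating it as a standard fact about homotopy pushouts, so there is no internal argument to compare against; your proposal supplies the canonical proof and it is correct. Replacing $f$ and $g$ by constant maps is legitimate because the double mapping cylinder is invariant, up to homotopy equivalence, under replacing either leg by a homotopic map (this uses that $Z\times\{0,1\}\hookrightarrow Z\times[0,1]$ is a cofibration, or the cubical machinery the paper quotes), and your identification of $D(c_{x_0},c_{y_0})$ as the unreduced suspension of $Z$ with $X$ and $Y$ glued at the two cone points, followed by collapsing the contractible meridian $\{z_0\}\times[0,1]$ to merge the two attachment points and pass to the reduced suspension, is exactly the standard unwinding. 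The caveats you flag are the right ones and are harmless here: in every application in the paper $X$, $Y$, $Z$ are (geometric realizations of) simplicial complexes, so the CW/cofibration and well-pointedness hypotheses hold, the reduced and unreduced suspensions agree up to homotopy, and $Z\neq\emptyset$ in the cases where the wedge formula is invoked (the paper treats the $Z=\emptyset$ case separately, e.g.\ case (a) in Lemma \ref{lemhomotopiagraf}).
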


As a consequence of the last lemma we obtain the following lemma which we will need when studying the filtration for 
multipartite graphs.

\begin{lem}\label{lemhomotopiagraf}
Let $X_1,\dots,X_k$ be connected simplicial complexes such that the intersection of two or more is either
contractible or empty and there is a graph $G$ of size $k$ and a 
bijection $\gamma\colon\{1,\dots,k\}\longrightarrow E(G)$ such that 
$\bigcap_{i\in S}\gamma(i)\neq\emptyset$ if an only if $\bigcap_{i\in S}X_i\neq\emptyset$ for all non-empty 
$S$ subsets of $\{1,\dots,k\}$. Then 
$\displaystyle X=\bigcup_{i=1}^kX_i$ has the homotopy type of the nerve with the complexes 
$X_i$ attached to the corresponding point in the nerve.
\end{lem}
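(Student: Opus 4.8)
The plan is to treat this as a generalized nerve lemma. By the hypothesis on intersections the nerve $\mathcal{N}$ of the cover $\{X_i\}$ of $X$ is exactly the nerve of the cover of the $1$-complex $G$ by its edges: a set $S$ of indices spans a face precisely when the edges $\{\gamma(i):i\in S\}$ have a common vertex, so $\mathcal{N}=\bigcup_{v\in V(G)}\sigma_v$ with $\sigma_v$ the full simplex on the edges incident to $v$, any two of these simplices meeting in at most one vertex and no three meeting at all. Conceptually: the $X_i$ and their intersections are subcomplexes of $X$, so the punctured cube of the intersections $X_S=\bigcap_{i\in S}X_i$ is cofibrant with colimit $X$, hence $X\simeq\hocolim$ of the intersection diagram $S\mapsto X_S$ over the face poset of $\mathcal{N}$; collapsing each $X_S$ with $|S|\ge 2$ (which is contractible) to a point turns this homotopy colimit into $\mathrm{sd}\,\mathcal{N}$ with every vertex $v_i$ thickened to $X_i$, i.e.\ $\mathcal{N}$ with $X_i$ attached at $v_i$, the connectedness of $X_i$ being what makes the attaching point irrelevant. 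I would, however, carry out the collapsing concretely by induction on $k$ using only Lemma~\ref{homocolimpegado}.

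The statement localizes over the components of $G$ (complexes indexed by edges in different components are disjoint), so assume $G$ connected; the case $k=1$ is immediate ($\mathcal{N}$ is a point and $X_1$ is connected). For the step, pick any edge $e_k=\gamma(k)=uv$, put $Y=X_1\cup\dots\cup X_{k-1}$, and apply the inductive hypothesis to $X_1,\dots,X_{k-1}$ over $G-e_k$ to identify $Y$ up to homotopy with the nerve of $G-e_k$ and its attached complexes. The gluing locus is $Y\cap X_k=A_u\cup A_v$, where $A_u=\bigcup\{X_i\cap X_k:e_i\ni u,\ i\neq k\}$ and $A_v$ is symmetric. Two observations drive the argument. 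First, because $G$ is simple no edge other than $e_k$ contains both $u$ and $v$, so no triple $\{e_k,e_i,e_j\}$ with $e_i\ni u$, $e_j\ni v$ has a common vertex, whence $A_u\cap A_v=\emptyset$. Second, every intersection of pieces making up $A_u$ has the form $X_k\cap\bigcap_{i\in J}X_i$ for a set $\{k\}\cup J$ of at least two edges through $u$, hence is nonempty and contractible by hypothesis; a short secondary induction on the number of edges at $u$, applying Lemma~\ref{homocolimpegado} with a contractible apex at each step, then shows $A_u$ is contractible, and likewise $A_v$. So $Y\cap X_k$ is contractible, homotopy equivalent to $\mathbb{S}^0$, or empty, according as exactly one, both, or neither of $u,v$ has another edge.

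Finally I would glue $X_k$ back on. When $Y\cap X_k$ is contractible or empty, both maps out of it are null-homotopic and Lemma~\ref{homocolimpegado} gives $X\simeq Y\vee X_k$ (or $Y\sqcup X_k$). When $Y\cap X_k\simeq\mathbb{S}^0$ and $e_k$ lies on a cycle, $G-e_k$ is still connected so $Y$ is connected, both maps $\mathbb{S}^0\to Y$ and $\mathbb{S}^0\to X_k$ are null-homotopic, and $X\simeq Y\vee X_k\vee\mathbb{S}^1$. The delicate case, which I expect to be the main obstacle, is $Y\cap X_k\simeq\mathbb{S}^0$ with $e_k$ a bridge between two non-leaf vertices: then $Y=Y_1\sqcup Y_2$ is disconnected and Lemma~\ref{homocolimpegado} does not apply to $Y\leftarrow Y\cap X_k\to X_k$ directly. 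There I would instead present $X$ as the colimit — equivalently, since all maps are inclusions of subcomplexes, the homotopy colimit — of the zig-zag $Y_1\leftarrow A_u\to X_k\leftarrow A_v\to Y_2$ and evaluate it in two steps: $\hocolim(Y_1\leftarrow A_u\to X_k)\simeq Y_1\vee X_k$ by Lemma~\ref{homocolimpegado} (the apex $A_u$ is contractible, so both maps are null-homotopic irrespective of connectivity), and then $\hocolim\big((Y_1\vee X_k)\leftarrow A_v\to Y_2\big)\simeq Y_1\vee X_k\vee Y_2$ for the same reason. In each case the identical homotopy-pushout computation applies verbatim to the target space once the vertex $v_{e_k}$ and the simplices $\sigma_u',\sigma_v'$ on the other edges at $u$ and $v$ are peeled off — the link of $v_{e_k}$ in $\mathcal{N}$ is the cone on $\sigma_u'\sqcup\sigma_v'$, which are contractible or empty in exactly the pattern of $A_u,A_v$ — so by the inductive hypothesis $X$ has the homotopy type of $\mathcal{N}$ with the $X_i$ attached. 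Besides the bridge case, the only real work is the routine matching of these two parallel decompositions.
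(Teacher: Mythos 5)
Your proposal is correct and follows essentially the same strategy as the paper's proof: induction on the number of complexes, peeling off the complex attached to one edge, observing that its intersection with the union of the rest is empty, contractible, or homotopy equivalent to $\mathbb{S}^0$ according to the degrees of that edge's endpoints, and regluing via Lemma~\ref{homocolimpegado}. You are in fact more careful than the paper at one point: in its case (c) the paper asserts $\hocolim(\mathcal{S})\simeq\hocolim(\mathcal{S}_2)\vee\mathbb{S}^1\vee X_{k+1}$ unconditionally, which needs both maps out of the $\mathbb{S}^0$ to be null-homotopic and fails exactly when the removed edge is a bridge between two non-leaf vertices (then $\hocolim(\mathcal{S}_2)$ is disconnected and no $\mathbb{S}^1$ should appear); your two-step evaluation of the zig-zag $Y_1\longleftarrow A_u\longrightarrow X_k\longleftarrow A_v\longrightarrow Y_2$ handles precisely that case, and your verification that $A_u$, $A_v$ are disjoint and contractible also substantiates the paper's unproved identification $\hocolim(\mathcal{S}_1)\simeq lk(k+1)$.
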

\begin{proof}
We prove this lemma by induction on $k$. For $k=1,2$ the result is clear. Assume it is true for any $r\leq k$ and take 
$X_1,\dots,X_{k+1}$ simplicial complexes such that the intersection of two or more is 
contractible or empty, $X_i$ is connected for all $i$ and there is a graph $G$ of size $k+1$ and a 
bijection $\gamma\colon\{1,\dots,k+1\}\longrightarrow E(G)$ such that 
$\bigcap_{i\in S}\gamma(i)\neq\emptyset$ if an only if $\bigcap_{i\in S}X_i\neq\emptyset$ for all non-empty 
$S$ subset of $\{1,\dots,k+1\}$. Now, 
take $\mathcal{N}$ the nerve complex of $X_1,\dots,X_{k+1}$. For any $i\in\{1,\dots,k+1\}$, $\mathrm{lk}(i)$ is:
\begin{itemize}
\item[(a)] Empty if the both vertices of the corresponding edge have degree $1$.
\item[(b)] Contractible if one of the vertices of the corresponding  edge has degree $1$ and the other has degree at 
least $2$.
\item[(c)] Homotopy equivalent to $\mathbb{S}^0$ if both of the vertices of the corresponding edge have degree at least 
$2$.
\end{itemize}
By the inductive formula for homotopy colimits of punctured cubes, the homotopy colimit of the intersection diagram associated to $X_1,\dots,X_{k+1}$ is homotopy equivalent to the homotopy pushout of the diagram
\begin{equation*}
\xymatrix{\mathcal{S}\colon\hocolim(\mathcal{S}_2)&\hocolim(\mathcal{S}_1) \ar@{->}[l] \ar@{->}[r]& X_{k+1}},
\end{equation*}
where $S_1$ is the homotopy colimit of the intersection diagram associated to $X_1\cap X_{k+1},\dots,X_k\cap X_{k+1}$, and 
$\mathcal{S}_2$ is the homotopy colimit of the intersection diagram associated to $X_1,\dots,X_k$. 
Now, taking $\mathcal{U}'=\{U_i\cap U_k:\;U_i\cap U_k\neq\emptyset\}$ we have that 
$\mathcal{N(U')}\simeq\hocolim(\mathcal{S}_1)$ and $\mathcal{N(U')}\cong\mathrm{lk}(k+1)$ so we have the same three possibilities as before 
regarding the degrees of the vertices of the corresponding edge. If $G$ is a forest or has a vertex of degree $1$, 
we can assume that of the vertices of the edge corresponding to $X_{k+1}$ at least one has degree one. Therefore:
\begin{itemize}
    \item[(a)] $\hocolim(\mathcal{S}_1)=\emptyset$, then $\hocolim(\mathcal{S})\simeq\hocolim(\mathcal{S}_2)\sqcup X_{k+1}$.
    
    \item[(b)] $\hocolim(\mathcal{S}_1)\simeq*$, then $\hocolim(\mathcal{S})\simeq\hocolim(\mathcal{S}_2)\vee X_{k+1}$.
\end{itemize}
Assume $G$ is such that $\delta(G)\geq2$. We can assume that the edge corresponding to $X_{k+1}$ is not a bridge, therefore we have that 
$\hocolim(\mathcal{S}_1)\simeq\mathbb{S}^0$ and $\hocolim(\mathcal{S})\simeq\hocolim(\mathcal{S}_2)\vee\mathbb{S}^1\vee X_{k+1}$.
\end{proof}

\section{Definition and basic properties}
Let $G$ be a graph, we define its \textit{$d$-forest complex} as the complex
$$\mathcal{F}_d(G)=\{\sigma\subseteq V(G)\colon \:G[\sigma] \mbox{ is a forest with }\Delta(G[\sigma])\leq d\};$$
for $d=\infty$ we take 
$$\mathcal{F}_\infty(G)=\{\sigma\subseteq V(G)\colon \:G[\sigma] \mbox{  is a forest}\}.$$
For $d=0$, $\mathcal{F}_0(G)$ is the independence complex of $G$ and for $d=1$ is also called the \textit{$2$-independence} 
complex of $G$ ---the \textit{$r$-independence} complex of $G$ has as simplices sets $A \subseteq V(G)$ such that every 
connected component of $G[A]$ has at most $r$ vertices. Note 
that if $d+1=\min\{r\colon G\mbox{ is } K_{1,r}\mbox{-free}\}$, then $\mathcal{F}_l(G)=\mathcal{F}_d(G)$ for all $l\geq d$.

Given a graph $G$ let $t_d(G)=\max\{|V(T)|\colon T \mbox{ is an induced forest such that }\Delta(T)\leq d\}$, by definition 
$t_d(G)=\dim(\mathcal{F}_d(G))+1$, 
therefore knowing the homotopy type of $\mathcal{F}_d(G)$ or its homology groups gives us a lower bound for $t_d(G)$. 

\begin{theorem}
For any graph $G$ and all $d$, the pair $(\mathcal{F}_{d+1}(G),\mathcal{F}_d(G))$ is $d$-connected.
\end{theorem}
\begin{proof}
For any $d$, we have that $\mathrm{sk}_i\mathcal{F}_d(G)=\mathrm{sk}_i\mathcal{F}_{d+1}(G)$ for all $i\leq d$ because a forest of order $i+1$ has maximum 
degree at most $i$. Then all the cells in $\mathcal{F}_{d+1}(G)-\mathcal{F}_d(G)$ have dimension greater than $d$ and this implies 
the result (see \citep[Corollary 4.12]{hatcher}).
\end{proof}

\begin{theorem}\label{coneccuello}
For any graph $G$ and $d\ge g(G)-1$, $\mathrm{conn}\left(\mathcal{F}_d(G)\right)\geq g(G)-3$.
\end{theorem}
\begin{proof}
The result follows from:
$$\mathrm{sk}_{g(G)-2}\mathcal{F}_d(G)\cong\mathrm{sk}_{g(G)-2}\Delta^{|V(G)|-1}\simeq\bigvee_{\binom{|V(G)|-1}{g(G)-1}}\mathbb{S}^{g(G)-2}.$$
\end{proof}

The following results are easy to see from the definition of $F_d(G)$.

\begin{prop}
For $d\geq1$,
$$\mathcal{F}_d(K_n)\simeq\bigvee_{\frac{(n-1)(n-2)}{2}}\mathbb{S}^1.$$
\end{prop}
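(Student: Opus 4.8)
The plan is to identify the complex $\mathcal{F}_d(K_n)$ completely and then invoke the classification of connected graphs up to homotopy. First I would note that for a subset $\sigma\subseteq V(K_n)$ the induced graph $K_n[\sigma]$ is the complete graph on $|\sigma|$ vertices, and $K_m$ is a forest if and only if $m\leq 2$, since $K_3$ already contains a $3$-cycle. Hence, for every $d\geq 1$, the simplices of $\mathcal{F}_d(K_n)$ are precisely the empty set, the $n$ singletons, and the $\binom{n}{2}$ pairs (each pair induces $K_2$, a forest of maximum degree $1\leq d$). In particular $\mathcal{F}_d(K_n)$ does not depend on $d$ for $d\geq 1$ and equals the complete graph $K_n$ regarded as a $1$-dimensional simplicial complex; this is also consistent with the remark above, since $K_n$ is $K_{1,2}$-free.

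Next, since $K_n$ is connected, $\mathcal{F}_d(K_n)$ is a connected $1$-dimensional CW-complex, and any such complex is homotopy equivalent to a wedge of $1-\chi$ circles, where $\chi$ denotes the Euler characteristic. (Concretely, contract a spanning tree: it has $n-1$ edges, so after the contraction one is left with $\binom{n}{2}-(n-1)$ loops wedged at a point.) Since $\chi\big(\mathcal{F}_d(K_n)\big)=n-\binom{n}{2}$, the number of circles is
$$1-\chi\big(\mathcal{F}_d(K_n)\big)=\binom{n}{2}-(n-1)=\frac{(n-1)(n-2)}{2},$$
which proves the claim.

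There is essentially no obstacle here: the content is the observation that, because $K_n$ contains no induced forest on three vertices, $\mathcal{F}_d(K_n)$ collapses to a graph, after which the result is a standard Euler-characteristic count. The only points worth a glance are the degenerate cases $n=1$ and $n=2$, where $\mathcal{F}_d(K_n)$ is a point, respectively an edge, and $\frac{(n-1)(n-2)}{2}=0$, so the (empty) wedge of circles is a point, in agreement with the formula.
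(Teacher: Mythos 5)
Your proof is correct and matches the paper's (implicit) reasoning: the paper simply declares the result clear by definition, and your argument spells out exactly why, namely that for $d\geq1$ the complex $\mathcal{F}_d(K_n)$ is the complete graph $K_n$ viewed as a $1$-dimensional complex, hence a wedge of $\binom{n}{2}-(n-1)=\frac{(n-1)(n-2)}{2}$ circles by the standard Euler-characteristic/spanning-tree argument.
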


\begin{prop}
For $n\geq3$ and $d\geq2$,
$$\mathcal{F}_d(C_n)\cong\mathbb{S}^{n-2}.$$
\end{prop}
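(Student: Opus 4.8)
The plan is to identify $\mathcal{F}_d(C_n)$ explicitly as the boundary complex of a simplex. The starting observation is that the cycle $C_n$ contains exactly one cycle as a subgraph, namely $C_n$ itself. Hence for $\sigma\subseteq V(C_n)$, the induced subgraph $C_n[\sigma]$ is acyclic if and only if $\sigma\neq V(C_n)$, i.e. if and only if $\sigma$ is a proper subset of the $n$ vertices.

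Next I would check the degree condition. If $\sigma\subsetneq V(C_n)$, then deleting at least one vertex from a cycle leaves a disjoint union of paths, so every vertex of $C_n[\sigma]$ has degree at most $2$; in particular $\Delta(C_n[\sigma])\leq 2\leq d$ for every $d\geq 2$ (and also for $d=\infty$). Combining this with the previous paragraph, the membership condition defining $\mathcal{F}_d(C_n)$ reduces, for $d\geq 2$, to the single requirement $\sigma\neq V(C_n)$. Therefore
$$\mathcal{F}_d(C_n)=\{\sigma\subseteq V(C_n)\colon\ \sigma\subsetneq V(C_n)\}=\partial\Delta^{\,n-1},$$
the boundary of the full $(n-1)$-simplex on the vertex set $V(C_n)$.

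Finally I would invoke the standard fact that $\partial\Delta^{\,n-1}$ is homeomorphic to the sphere $\mathbb{S}^{n-2}$ (for $n\geq 3$ this is a sphere of dimension $\geq 1$, so there is nothing degenerate to worry about), which gives $\mathcal{F}_d(C_n)\cong\mathbb{S}^{n-2}$. As a side remark, this also shows $\mathcal{F}_\infty(C_n)\cong\mathbb{S}^{n-2}$, and that the filtration stabilizes already at $d=2$ since $C_n$ is $K_{1,3}$-free. I do not anticipate a genuine obstacle here; the only point requiring a moment of care is the verification that \emph{every} proper induced subgraph of $C_n$ is a forest of maximum degree at most $2$, which is immediate once one notes that removing a vertex from a cycle yields a path (or a disjoint union of paths, if more vertices are removed).
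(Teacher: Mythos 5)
Your argument is correct and is precisely the reasoning the paper leaves implicit: it lists this proposition among the results that are ``clear by definition,'' and your identification of $\mathcal{F}_d(C_n)$ with $\partial\Delta^{n-1}$ (every proper subset of $V(C_n)$ induces a disjoint union of paths, hence a forest of maximum degree at most $2\leq d$, while the full vertex set is excluded) is exactly that intended argument, spelled out. No gaps; the side remark about $\mathcal{F}_\infty(C_n)$ and stabilization at $d=2$ is also consistent with the paper's observation that $\mathcal{F}_l(G)=\mathcal{F}_d(G)$ for $l\geq d$ when $d+1=\min\{r\colon G\text{ is }K_{1,r}\text{-free}\}$.
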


A subset of vertices $\sigma$ is an independent set if all of its subsets of cardinality $2$ are independent. This says that 
in order to be a simplex of the independence complex, a set of vertices only need to have its $1$-skeleton 
contained in the complex. This type of complexes are called \emph{flag complexes}. Now, for 
$\mathcal{F}_1(G)$, its $1$-skeleton is the complete graph of the same order as $G$, therefore it is not a flag complex in general, because 
it is not contractible for all graphs. The following result tells us that it has an analogous property but for the $2$-skeleton, rather than the $1$-skeleton. Sadly this can not be generalized for $\mathcal{F}_d(G)$ with $d\geq2$ as $\mathcal{F}_d(C_{d+3})$ shows.

\begin{prop}
Let $\sigma$ be a subset of $V(G)$ such that all of its subsets of cardinality $3$ are simplices of $\mathcal{F}_1(G)$, then 
$\sigma$ is a simplex of $\mathcal{F}_1(G)$.
\end{prop}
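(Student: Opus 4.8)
The plan is to reduce membership in $\mathcal{F}_1(G)$ to a purely degree-theoretic condition and then argue by contradiction. First I would observe that a graph with maximum degree at most $1$ cannot contain a cycle, since every cycle has a vertex of degree $2$; hence for any vertex set $\tau$ the condition ``$G[\tau]$ is a forest with $\Delta(G[\tau])\le 1$'' is equivalent to the single condition $\Delta(G[\tau])\le 1$. In particular, a set $\{u,v,w\}$ of three vertices is a simplex of $\mathcal{F}_1(G)$ if and only if $G[\{u,v,w\}]$ has at most one edge.

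Next I would suppose, for contradiction, that $\sigma$ is not a simplex of $\mathcal{F}_1(G)$. By the reduction just made, this means $\Delta(G[\sigma])\ge 2$, so there is a vertex $v\in\sigma$ with two distinct neighbours $u,w\in\sigma$. Then $\{u,v,w\}$ is a subset of $\sigma$ of cardinality $3$ and $G[\{u,v,w\}]$ contains the two edges $uv$ and $vw$, so it is not a simplex of $\mathcal{F}_1(G)$, contradicting the hypothesis. Therefore $\Delta(G[\sigma])\le 1$, and by the equivalence above $\sigma\in\mathcal{F}_1(G)$.

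Finally I would dispose of the trivial case $|\sigma|\le 2$ separately: here the hypothesis on $3$-subsets is vacuous, but $G[\sigma]$ has at most one edge in any case, so $\sigma$ is automatically a simplex; this is the only bookkeeping required. I do not expect any real obstacle here — the whole content is the remark, parallel to the flag property of the independence complex, that the property $\Delta\le 1$ of an induced subgraph is detected on triples of vertices, exactly as the property $\Delta\le 0$ (independence) is detected on pairs.
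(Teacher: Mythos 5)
Your proof is correct, and it takes a genuinely more direct route than the paper. The paper proceeds by induction on $|\sigma|$: it first handles $|\sigma|=4$ by hand, then assumes the claim for $k$ vertices, notes that the induced graph on the first $k$ vertices must be of the form $rK_1\sqcup M_s$ (isolated vertices plus a matching), and checks that the new vertex can attach to at most one isolated vertex without violating the triple condition. You instead isolate the key structural fact up front: since a graph of maximum degree at most $1$ can have no cycle, membership of $\tau$ in $\mathcal{F}_1(G)$ is equivalent to the single local condition $\Delta(G[\tau])\le 1$, and a violation of this condition on $\sigma$ is witnessed by one vertex together with two of its neighbours, i.e.\ by a $3$-subset that fails to be a simplex. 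This contrapositive argument eliminates the induction entirely and makes transparent the analogy the paper is gesturing at (independence, $\Delta\le 0$, is detected on pairs; the $\mathcal{F}_1$ condition, $\Delta\le 1$, is detected on triples), whereas the paper's induction gives as a by-product the explicit normal form $rK_1\sqcup M_s$ of the induced subgraphs, which your argument does not need and does not produce. Your handling of the trivial case $|\sigma|\le 2$ is the only bookkeeping required, and you do it correctly.
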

\begin{proof}
If $|\sigma|\leq3$ the result is clear. Now let $\sigma=\{v_0,v_1,v_2,v_3\}$. Then, for $\tau=\{v_0,v_1,v_2\}$, we have that $G_\tau=G[\tau]$ is forest such that 
$\Delta(G_\tau)\leq1$. Now, $v_3$ at most can have one neighbor in $\tau$ and it must be a vertex 
of degree $0$ in $G_\tau$. Therefore $G_\sigma=G[\sigma]$ is a graph such that $\Delta(G_\sigma)\leq1$, which implies it is a forest 
and $\sigma$ is a simplex of $\mathcal{F}_1(G)$.

Assume the result is true for any subset of at most $k\geq 4$ vertices that has its $2$-skeleton in $\mathcal{F}_1(G)$. Let 
$\sigma=\{v_0,\dots,v_k\}$ a subset of $k+1$ vertices such that its $2$-skeleton is in $\mathcal{F}_1(G)$. By induction hypothesis, 
$\tau=\{v_0,\dots,v_{k-1}\}$ is a simplex of $\mathcal{F}_1(G)$, therefore, taking $G_\tau$ as before,
$$G_\tau\cong rK_1 \sqcup M_s$$
with $r,s\geq0$ and $r+2s=k$, where $M_s$ is the graph formed by $s$-disjoint edges. 
By hypothesis, $v_k$ can not be adjacent to a vertex in $M_s$ and only can be adjacent to 
one vertex in $rK_1$. So $\sigma$ induces a graph with maximum degree at most $1$ and therefore $\sigma$ is a simplex of 
$\mathcal{F}_1(G)$.
\end{proof}

A well-known folklore result is that
if $K$ is a simplicial complex such that $\tilde{H}_q(K)\ncong0$, then $f_i(K)\geq f_i\left(\partial\Delta^{q+1}\right)$ and $f_0(K)=q+2$ 
if and only if $K\cong\partial\Delta^{q+1}$. This implies the following.

\begin{prop}
Let $G$ be a graph such that $\tilde{H}_q(\mathcal{F}_d(G))\ncong0$ for some $d$ and $q$, then $G$ has at least $q+2$ different
induced forests of $q+1$ vertices and maximum degree at most $d$.
\end{prop}

\begin{prop}
Let $G$ be a graph of order $q+2$, with $q\geq1$, then:
\begin{enumerate}
    \item If $\tilde{H}_q(\mathcal{F}_q(G))\ncong0$ for $q\neq\infty$, then $G\cong K_{_{1,q+1}}$ or $G\cong C_{q+2}$.
    \item If $\tilde{H}_q(\mathcal{F}_\infty(G))\ncong0$, then $G\cong C_{q+2}$.
\end{enumerate}
\end{prop}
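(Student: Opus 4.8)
The plan is to reduce both statements to one structural fact: under the given hypothesis the relevant complex must be exactly $\partial\Delta^{q+1}$, after which the graph can be read off.

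First I would observe that for every graph $G$ and every $d$ (including $d=\infty$) each singleton $\{v\}$, $v\in V(G)$, is a simplex of $\mathcal{F}_d(G)$, since an isolated vertex is a forest of maximum degree $0\le d$. Hence $f_0(\mathcal{F}_q(G))=f_0(\mathcal{F}_\infty(G))=|V(G)|=q+2$. So if $\tilde H_q$ of either complex is nonzero, the $f$-vector statement recalled just before the proposition applies, and because the complex has exactly $q+2$ vertices it is forced to be isomorphic to $\partial\Delta^{q+1}$, i.e.\ to the complex on $q+2$ vertices whose faces are all the proper subsets of $V(G)$.

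It then remains to decide which graphs $G$ on $q+2$ vertices satisfy $\mathcal{F}_d(G)\cong\partial\Delta^{q+1}$, for $d=q$ and for $d=\infty$. By the description above, this is equivalent to asking that (i) $G[\sigma]$ be a forest of maximum degree $\le d$ for every proper subset $\sigma\subsetneq V(G)$, and (ii) $G$ itself not be a forest of maximum degree $\le d$. For $d=q$ a proper subset has at most $q+1$ vertices, so a forest induced on it automatically has maximum degree $\le q$; thus for both $d=q$ and $d=\infty$ condition (i) just says that $G-v$ is a forest for every vertex $v$ (it is enough to check the $(q+1)$-vertex subsets, induced subgraphs of forests being forests). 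I would then split on whether $G$ contains a cycle. If $G$ contains a cycle $C$, then every vertex of $G$ lies on $C$ --- otherwise deleting a vertex off $C$ would violate (i) --- so $C$ is a Hamilton cycle and $G$ contains $C_{q+2}$ as a spanning subgraph; moreover $G$ has no further edge, since such an edge would be a chord creating a cycle of length $<q+2$, and deleting a vertex outside that shorter cycle would again violate (i). Hence $G=C_{q+2}$, and then (ii) holds for both values of $d$. If instead $G$ is a forest, (i) is automatic; for $d=\infty$ condition (ii) then fails, so no such $G$ exists and part (2) is proved; for $d=q$, (ii) forces a vertex of degree $q+1$, hence adjacent to all the other $q+1$ vertices, and since this already accounts for the $q+1$ edges of a spanning tree, $G=K_{1,q+1}$, which proves part (1).

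Most of this is bookkeeping with the definition of $\partial\Delta^{q+1}$ and the quoted $f$-vector statement, so the one place needing genuine (if modest) care is the cyclic case: showing that the cycle is spanning and chordless. As a consistency check one can verify the converses, $\mathcal{F}_q(C_{q+2})=\mathcal{F}_q(K_{1,q+1})=\mathcal{F}_\infty(C_{q+2})=\partial\Delta^{q+1}\simeq\mathbb{S}^q$, so the exceptional graphs listed are exactly the right ones, although the proposition only asserts the one direction.
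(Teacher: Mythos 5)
Your proof is correct, and it shares with the paper the decisive first step: since every singleton is a face, $f_0=q+2$, so the $f$-vector remark preceding the proposition forces $\mathcal{F}_q(G)$ (resp.\ $\mathcal{F}_\infty(G)$) to be $\partial\Delta^{q+1}$, i.e.\ exactly the proper subsets of $V(G)$ are faces. After that the two arguments diverge. The paper splits on the maximum degree: if $\Delta(G)=q+1$ it identifies $G$ topologically, writing $\mathcal{F}_q(G)=st(v)\cup\mathcal{F}_q(G-v)$ for a vertex $v$ of full degree, running Mayer--Vietoris to get $\tilde H_{q-1}(lk(v))\ncong0$, concluding $lk(v)\cong\partial\Delta^{q}$, and then deducing that $N(v)$ is independent (with $q=1$ treated separately), so $G\cong K_{1,q+1}$; if $\Delta(G)\leq q$ it takes an induced cycle and observes it must be spanning. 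You instead split on cycle versus forest and stay entirely combinatorial: from ``every vertex-deleted subgraph is a forest'' (noting, correctly, that for $d=q$ the degree bound is automatic on proper subsets) you get that any cycle is Hamiltonian and chordless, hence $G\cong C_{q+2}$, while in the forest case the failure of $V(G)$ to be a face forces a vertex of degree $q+1$ and an edge count then gives $K_{1,q+1}$ for $d=q$ and an outright contradiction for $d=\infty$. Your route is the more elementary one --- it avoids the star/link decomposition, the Mayer--Vietoris step, and the special handling of $q=1$ --- whereas the paper's version exercises the link-analysis technique it uses repeatedly elsewhere; both land on the same classification, and your closing consistency check of the converses is a nice touch even though the proposition only claims one direction.
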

\begin{proof}
For $d=q$ or $d=\infty$, we have that $\mathcal{F}_d(G)\cong\partial\Delta^{q+1}$ and  
for any proper subset of vertices $S$, $\mathcal{F}_d(G[S])$ must be contractible. 
If $\Delta(G)=q+1$, then $G$ can not have cycles because $V(G)-\{v\}$ is a simplex for any vertex and 
$\mathcal{F}_\infty(G)\simeq*$. Take
$v$ a vertex such that $d_G(v)=q+1$, then $\mathcal{F}_q(G)=\mathrm{st}(v)\cup\mathcal{F}_q(G-v)$ and, because 
$\tilde{H}_{q}(\mathcal{F}_q(G-v))\cong0$, using the Mayer-Vietoris sequence we have that 
$\tilde{H}_{q-1}(lk(v))\ncong0$. Therefore $\mathrm{lk}(v)\cong\partial\Delta^{q}$. If $q=1$, then 
$\mathrm{lk}(v)$ is two disjoint vertices from where it follows that $G\cong K_{1,2}$ or $G\cong C_3$. Assume $q\geq2$, 
then $N(v)$ must be an independent set and $G\cong K_{_{1,q+1}}$.

Assume $\Delta(G)\leq q$, then $G$ must have a cycle, otherwise $\mathcal{F}_d(G)\simeq*$ for $d=q$ or 
$d=\infty$. Let $C\leq G$ be an induced cycle. Since any proper subset is a simplex, then $V(C)=V(G)$. 
Therefore $G\cong C_{q+2}$.
\end{proof}

\begin{prop}\label{proppuente}
If $e\in E(G)$ is bridge, then $\mathcal{F}_{\infty}(G)=\mathcal{F}_{\infty}(G-e)$.
\end{prop}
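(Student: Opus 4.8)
The plan is to verify that the two complexes, which live on the common vertex set $V(G)$, have exactly the same simplices, by proving both inclusions. First I would dispose of $\mathcal{F}_{\infty}(G)\subseteq\mathcal{F}_{\infty}(G-e)$, which uses nothing about $e$ being a bridge: for any $\sigma\subseteq V(G)$ the graph $(G-e)[\sigma]$ is a subgraph of $G[\sigma]$ (it equals $G[\sigma]$ unless both endpoints of $e$ lie in $\sigma$, in which case it is $G[\sigma]$ with the edge $e$ deleted), so if $G[\sigma]$ is a forest then so is $(G-e)[\sigma]$.

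For the reverse inclusion I would take $\sigma$ with $(G-e)[\sigma]$ a forest and argue that restoring $e=uv$ cannot create a cycle. Write $e=uv$. If $u\notin\sigma$ or $v\notin\sigma$, then $G[\sigma]=(G-e)[\sigma]$ and there is nothing to prove; so assume $u,v\in\sigma$, and note that $G[\sigma]$ is obtained from the forest $(G-e)[\sigma]$ by adding back the single edge $uv$. Adding one edge to a forest creates a cycle precisely when its two endpoints already lie in the same connected component, so it suffices to show that $u$ and $v$ lie in different components of $(G-e)[\sigma]$. If they did not, a $u$--$v$ path inside $(G-e)[\sigma]$ would in particular be a $u$--$v$ path in $G-e$, and together with the edge $e$ it would close up into a cycle of $G$ passing through $e$ --- impossible, since a bridge lies on no cycle. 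Hence $G[\sigma]$ is a forest and $\sigma\in\mathcal{F}_{\infty}(G)$.

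The main obstacle, such as it is, is simply to invoke the right property of bridges: an edge is a bridge exactly when it lies on no cycle, equivalently when its two endpoints lie in different connected components of $G-e$. Everything else is the routine observation that deleting and re-inserting a single edge interacts transparently with passing to induced subgraphs and with the forest condition.
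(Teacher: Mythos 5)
Your proof is correct: both inclusions are established properly, and the key step---that a bridge lies on no cycle, so restoring $e=uv$ to the forest $(G-e)[\sigma]$ cannot close a cycle because $u$ and $v$ cannot be joined by a path in $(G-e)[\sigma]$---is exactly the right use of the bridge hypothesis. The paper states this proposition without proof, and your argument is precisely the routine verification it leaves implicit, so nothing further is needed.
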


\begin{lem}
If $G=G_1\sqcup G_2$, then for all $d$,
$$\mathcal{F}_d(G)=\mathcal{F}_d(G_1)*\mathcal{F}_d(G_2).$$
\end{lem}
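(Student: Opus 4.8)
The plan is to unwind both sides as explicit families of subsets of the common vertex set $V(G)=V(G_1)\sqcup V(G_2)$ and check they coincide. Recall that the join $K*L$ of two simplicial complexes on disjoint vertex sets consists of all unions $\sigma_1\cup\sigma_2$ with $\sigma_1\in K$ and $\sigma_2\in L$; equivalently, a set $\sigma$ belongs to $K*L$ if and only if $\sigma\cap V(K)\in K$ and $\sigma\cap V(L)\in L$. So it suffices to show that for every $\sigma\subseteq V(G)$, writing $\sigma_i=\sigma\cap V(G_i)$, we have $\sigma\in\mathcal{F}_d(G)$ if and only if $\sigma_1\in\mathcal{F}_d(G_1)$ and $\sigma_2\in\mathcal{F}_d(G_2)$.

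The key structural fact is that, since $G$ has no edges between $V(G_1)$ and $V(G_2)$, the induced subgraph splits as $G[\sigma]=G_1[\sigma_1]\sqcup G_2[\sigma_2]$. Two elementary graph-theoretic observations then finish the argument. First, a disjoint union of graphs is acyclic if and only if each summand is acyclic, so $G[\sigma]$ is a forest exactly when both $G_1[\sigma_1]$ and $G_2[\sigma_2]$ are forests. Second, the degree of a vertex in $G[\sigma]$ equals its degree in whichever summand contains it, so $\Delta(G[\sigma])=\max\{\Delta(G_1[\sigma_1]),\Delta(G_2[\sigma_2])\}$, and hence $\Delta(G[\sigma])\leq d$ if and only if $\Delta(G_i[\sigma_i])\leq d$ for $i=1,2$.

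Combining these, $\sigma\in\mathcal{F}_d(G)$, i.e. $G[\sigma]$ is a forest with $\Delta(G[\sigma])\leq d$, holds precisely when each $G_i[\sigma_i]$ is a forest with $\Delta(G_i[\sigma_i])\leq d$, that is, when $\sigma_i\in\mathcal{F}_d(G_i)$ for both $i$. This is the desired equality of simplex sets, and it is an equality of complexes since both sides have vertex set $V(G)$. The case $d=\infty$ is identical, simply omitting the degree condition throughout. There is essentially no obstacle here; the only point requiring a moment's care is the degenerate situation in which one of $\sigma_1,\sigma_2$ is empty, so that the corresponding induced subgraph is the empty graph, but the empty set is a simplex of every (nonvoid) complex and the empty graph is vacuously a forest of maximum degree at most $d$, so these cases cause no difficulty.
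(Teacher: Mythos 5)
Your proof is correct and is exactly the definitional unwinding that the paper has in mind: the paper states this lemma without proof, treating it as immediate from the definitions, and your verification (induced subgraphs split across the disjoint union, acyclicity and maximum degree are checked componentwise, and the join is characterized by intersecting with each vertex set) is the intended argument.
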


\begin{prop}
If $G=G_1\sqcup\cdots\sqcup G_k$, then for $d\geq0$, $$\mathrm{conn}(\mathcal{F}_d(G))\geq2k-2+\sum_{i=1}^k\mathrm{conn}(\mathcal{F}_d(G_i)).$$
\end{prop}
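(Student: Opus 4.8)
The plan is to reduce the statement to the two-factor situation handled by the preceding lemma and then to apply the classical connectivity estimate for joins.

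First I would apply the preceding lemma to the decomposition $G=(G_1\sqcup\cdots\sqcup G_{k-1})\sqcup G_k$ and induct on $k$, using that the join is associative up to homeomorphism, to obtain
$$\mathcal{F}_d(G)\cong\mathcal{F}_d(G_1)*\cdots*\mathcal{F}_d(G_k).$$
Each factor $\mathcal{F}_d(G_i)$ is nonempty, since every singleton $\{v\}\subseteq V(G_i)$ induces a forest of maximum degree $0\leq d$ and hence is a simplex; so $conn(\mathcal{F}_d(G_i))\geq-1$ for all $i$, under the convention that being $(-1)$-connected means being nonempty.

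Next I would invoke the inequality $conn(X*Y)\geq conn(X)+conn(Y)+2$, valid for all nonempty spaces $X,Y$. If one wants a self-contained argument, the quickest route is the homotopy equivalence $X*Y\simeq\Sigma(X\wedge Y)$ (valid for well-pointed spaces, in particular for simplicial complexes pointed at a vertex), followed by the Künneth formula for the reduced homology of a smash product together with the Hurewicz theorem: if $\tilde H_i(X)=0$ for $i\leq m$ and $\tilde H_i(Y)=0$ for $i\leq n$, then $\tilde H_i(X\wedge Y)=0$ for $i\leq m+n+1$, hence $\tilde H_i(X*Y)=0$ for $i\leq m+n+2$, and the join is simply connected once $m+n+2\geq1$; alternatively one simply cites this as standard.

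Finally I would close with a short induction. Writing $Y_1=\mathcal{F}_d(G_1)$ and $Y_j=Y_{j-1}*\mathcal{F}_d(G_j)$, so that $Y_k\cong\mathcal{F}_d(G)$, the join estimate yields $conn(Y_j)\geq conn(Y_{j-1})+conn(\mathcal{F}_d(G_j))+2$ at each step, and unwinding this from $j=k$ down to $j=1$ gives $conn(\mathcal{F}_d(G))\geq 2(k-1)+\sum_{i=1}^k conn(\mathcal{F}_d(G_i))$, which is the assertion. I do not expect a genuine obstacle here: the only point needing care is the handling of the connectivity conventions (for the empty complex, for disconnected factors, and for contractible factors where $conn=\infty$), the real content being the join-connectivity inequality, which is classical.
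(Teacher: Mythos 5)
Your proposal is correct and follows essentially the same route as the paper: the paper's proof is simply the observation that $\mathcal{F}_d(G)=\mathcal{F}_d(G_1)*\cdots*\mathcal{F}_d(G_k)$ (iterating the preceding lemma), with the standard join connectivity inequality $conn(X*Y)\geq conn(X)+conn(Y)+2$ left implicit. Your write-up just makes the induction and the classical join estimate explicit, which is fine.
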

\begin{proof}
This follows from $\mathcal{F}_d(G)=\mathcal{F}_d(G_1)*\cdots*\mathcal{F}_d(G_k)$
\end{proof}

\begin{lem}\label{lemacyvert}
If $v$ is a vertex such that no cycle of $G$ contains it, then 
$\mathcal{F}_\infty(G)\simeq*$.
\end{lem}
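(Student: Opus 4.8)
The plan is to exhibit $v$ as a cone point of $\mathcal{F}_\infty(G)$. Concretely, I would show that $\mathcal{F}_\infty(G)=st(v)$; since the closed star of a vertex equals $\{v\}*lk(v)$, it is a cone and hence contractible, which is exactly what is claimed.

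The heart of the matter is the following claim: for every simplex $\sigma\in\mathcal{F}_\infty(G)$ one has $\sigma\cup\{v\}\in\mathcal{F}_\infty(G)$. I would prove this by contradiction. Suppose $G[\sigma\cup\{v\}]$ contained a cycle $C$. Since $G[\sigma\cup\{v\}]$ is a subgraph of $G$, the cycle $C$ is in particular a cycle of $G$. Now split into two cases according to whether $v\in V(C)$. If $v\in V(C)$, then $C$ is a cycle of $G$ containing $v$, contradicting the hypothesis on $v$. If $v\notin V(C)$, then $V(C)\subseteq\sigma$, so $C$ is a cycle of $G[\sigma]$, contradicting $\sigma\in\mathcal{F}_\infty(G)$. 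In either case we reach a contradiction, so $G[\sigma\cup\{v\}]$ is a forest and the claim holds.

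Given the claim, every $\tau\in\mathcal{F}_\infty(G)$ satisfies $\tau\cup\{v\}\in\mathcal{F}_\infty(G)$, i.e.\ $\tau\in st(v)$; since $st(v)\subseteq\mathcal{F}_\infty(G)$ trivially, we conclude $\mathcal{F}_\infty(G)=st(v)$, which is contractible. As an alternative one could instead invoke Proposition \ref{proppuente}: a vertex on no cycle has all its incident edges bridges, and deleting them one at a time (each remaining a bridge, since deleting edges creates no new cycles) isolates $v$; then $\mathcal{F}_\infty(G)$ decomposes as a join with the single point $\mathcal{F}_\infty(\{v\})$, which is again a cone. The direct argument above is shorter, so I would use it.

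I do not expect a genuine obstacle here; the only subtlety worth stating carefully is that a cycle in an induced subgraph of $G$ is a cycle of $G$, which is what makes the hypothesis on $v$ applicable, together with the clean dichotomy according to whether the offending cycle passes through $v$.
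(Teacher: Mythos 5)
Your proof is correct and is essentially the paper's argument: the paper states $\mathcal{F}_\infty(G)=\{v\}*\mathcal{F}_\infty(G-v)$, i.e.\ that $v$ is a cone point, and your case analysis (the offending cycle either passes through $v$ or lies in $G[\sigma]$) is exactly the verification the paper leaves implicit. No gap here.
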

\begin{proof}
Because $v$ does not belong to a cycle, then $\mathcal{F}_\infty(G)=\{\{v\}\}*\mathcal{F}_\infty(G-v)$.
\end{proof}

\begin{cor}
If $\delta(G)\leq1$, then $\mathcal{F}_\infty(G)\simeq*$.
\end{cor}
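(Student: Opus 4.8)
The plan is to deduce this immediately from the preceding lemma. Since $\delta(G)\le 1$, there is a vertex $v$ with $d_G(v)\le 1$. The key (and essentially only) observation is that such a vertex can lie on no cycle of $G$: if $v$ were a vertex of a cycle $C\le G$, then $C$ would already contribute two distinct edges incident to $v$, forcing $d_G(v)\ge d_C(v)=2$, a contradiction. Hence no cycle of $G$ contains $v$.

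Applying the preceding lemma to this vertex $v$ then gives $\mathcal{F}_\infty(G)=\{v\}*\mathcal{F}_\infty(G-v)\simeq *$, as the join of any complex with a point is contractible. Since the whole argument is a one-line reduction to an already established lemma, there is no real obstacle here; the only step requiring (entirely routine) justification is the classical graph-theoretic fact that a vertex of degree at most $1$ cannot belong to a cycle.
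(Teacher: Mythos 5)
Your proof is correct and is exactly the intended argument in the paper: a vertex of degree at most $1$ lies on no cycle, so the preceding lemma gives $\mathcal{F}_\infty(G)=\{v\}*\mathcal{F}_\infty(G-v)\simeq *$. Nothing further is needed.
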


\begin{cor}\label{corgrad2}
If $G$ has a vertex $v$ such that $N_G(v)=\{v_1,v_2\}$, then 
$\mathcal{F}_\infty(G)\simeq\Sigma\mathrm{lk}_{_{\mathcal{F}_\infty(G)}}(v_i)$ for $i=1,2$.
\end{cor}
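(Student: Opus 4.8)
The plan is to realise $\mathcal{F}_\infty(G)$ as the union of two \emph{contractible} subcomplexes whose intersection is exactly $lk_{\mathcal{F}_\infty(G)}(v_i)$, so that the associated homotopy pushout collapses to a suspension. Fix $i\in\{1,2\}$, write $\mathcal{F}=\mathcal{F}_\infty(G)$ and $w=v_i$. First I would record the decomposition
$$\mathcal{F}=st(w)\cup\mathcal{F}_\infty(G-w),$$
which holds because a simplex $\sigma$ with $w\notin\sigma$ lies in $\mathcal{F}$ if and only if $G[\sigma]=(G-w)[\sigma]$ is a forest, i.e. if and only if $\sigma\in\mathcal{F}_\infty(G-w)$, while every simplex containing $w$ lies in $st(w)$; and, straight from the definitions of star and link,
$$st(w)\cap\mathcal{F}_\infty(G-w)=lk_{\mathcal{F}}(w).$$
Since $st(w)=\{w\}*lk_{\mathcal{F}}(w)$ is a cone, it is contractible.

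The key observation is that deleting a \emph{neighbour} of $v$ (rather than $v$ itself) destroys $v$'s only opportunity to lie on a cycle: $w$ is distinct from $v$ because $w\in N_G(v)$ and $G$ has no loops, and in $G-w$ the vertex $v$ has degree at most $1$, its only possible remaining neighbour being $v_{3-i}$. Hence no cycle of $G-w$ passes through $v$, and by the lemma stating that $\mathcal{F}_\infty(H)\simeq *$ whenever some vertex of $H$ lies on no cycle of $H$, we get $\mathcal{F}_\infty(G-w)\simeq *$. (Note that using $v$ itself here would only exhibit $\mathcal{F}$ as a mapping cone of $lk_{\mathcal{F}}(v)\hookrightarrow\mathcal{F}_\infty(G-v)$, not as a suspension; removing a neighbour is what makes the second piece contractible.)

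Now the three subcomplexes $st(w)$, $\mathcal{F}_\infty(G-w)$ and $lk_{\mathcal{F}}(w)$ form a cofibrant punctured $2$-cube (Proposition 5.8.25 of \citep{cubicalhomotopy}), so the colimit of the intersection diagram, which is $\mathcal{F}$, agrees up to homotopy with its homotopy colimit:
$$\mathcal{F}\;\simeq\;\hocolim\big(st(w)\longleftarrow lk_{\mathcal{F}}(w)\longrightarrow\mathcal{F}_\infty(G-w)\big).$$
Both inclusions are null-homotopic, their targets being contractible, so Lemma \ref{homocolimpegado} gives
$$\mathcal{F}\;\simeq\;st(w)\vee\mathcal{F}_\infty(G-w)\vee\Sigma lk_{\mathcal{F}}(w)\;\simeq\;\Sigma lk_{\mathcal{F}_\infty(G)}(v_i).$$
Running this for $i=1$ and $i=2$ gives both asserted equivalences.

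There is essentially no computation involved. The one genuine idea is the second paragraph — that one should delete a neighbour of $v$, reducing to a graph in which $v$ lies on no cycle — and after that the argument is the standard fact that two contractible complexes glued along a common subcomplex have the homotopy type of its suspension. I expect the only points needing care are verifying the two set-theoretic identities for the decomposition (in particular that the intersection is literally $lk_{\mathcal{F}}(w)$) and invoking cofibrancy of the punctured cube so that $\hocolim$ may be replaced by $\colim$.
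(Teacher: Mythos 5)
Your proposal is correct and follows essentially the same route as the paper: the paper likewise observes that $d_{G-v_i}(v)=1$ forces $\mathcal{F}_\infty(G-v_i)\simeq *$, writes $\mathcal{F}_\infty(G)$ as the homotopy pushout of $\mathcal{F}_\infty(G-v_i)\longleftarrow lk_{\mathcal{F}_\infty(G)}(v_i)\longrightarrow st_{\mathcal{F}_\infty(G)}(v_i)$, and concludes by Lemma \ref{homocolimpegado}. Your write-up merely spells out the set-theoretic decomposition and the cofibrancy justification that the paper leaves implicit.
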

\begin{proof}
Because $N_G(v)=\{v_1,v_2\}$, then $d_{G-v_i}(v)=1$ and  
$\mathcal{F}_\infty(G-v_i)\simeq*$. Now $\mathcal{F}_\infty(G)\simeq\hocolim(\mathcal{S})$ with  
$\mathcal{S}\colon\mathcal{F}_\infty(G-v_i)\longhookleftarrow\mathrm{lk}_{_{\mathcal{F}_\infty(G)}}(v_i)\longhookrightarrow\mathrm{st}_{_{\mathcal{F}_\infty(G)}}(v_i)$, 
by Lemma \ref{homocolimpegado} we obtain the result.
\end{proof}

The previous corollary implies that if the minimum degree of a graph $G$ is at most two, then $\mathcal{F}_\infty(G)$ has 
the homotopy type of a suspension.

\begin{lem}\label{lemlinkvertrian}
Let $G$ be a graph that is the union of three graphs $G_1,G_2,G_0$ such that: 
\begin{itemize}
    \item $G_0\cong K_3$
    \item $V(G_0)=\{v,v_1,v_2\}$
    \item $V(G_1)\cap V(G_0)=\{v_1\}$, $V(G_2)\cap V(G_0)=\{v_2\}$ and $V(G_1)\cap V(G_2)=\emptyset$
\end{itemize}
Then, $\mathrm{lk}_{\mathcal{F}_\infty(G)}(v)\simeq\hocolim(\mathcal{S})$ with $\mathcal{S}$ the diagram:
\begin{equation*}
    \xymatrix{\mathcal{F}_\infty(G_1)*\mathcal{F}_\infty(G_2-v_2)\ar@{<-^)}[r] & \mathcal{F}_\infty(G_1-v_1)*\mathcal{F}_\infty(G_2-v_2)\ar@{^(->}[r]& \mathcal{F}_\infty(G_1-v_1)*\mathcal{F}_\infty(G_2)}
\end{equation*}
\end{lem}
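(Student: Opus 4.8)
The plan is to compute $lk_{\mathcal{F}_\infty(G)}(v)$ by hand, present it as the union of two subcomplexes, and then recognize that union — which by cofibrancy agrees with its homotopy colimit — as the homotopy colimit of $\mathcal{S}$.

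First I would pin down the link. Since $V(G_0)=\{v,v_1,v_2\}$, $G_0\cong K_3$, and $v$ lies in neither $G_1$ nor $G_2$, the only edges of $G$ at $v$ are $vv_1$ and $vv_2$, so $N_G(v)=\{v_1,v_2\}$ and moreover $v_1v_2\in E(G)$. Hence, for $\tau\subseteq V(G)\setminus\{v\}$: if $\{v_1,v_2\}\subseteq\tau$ then $G[\tau\cup\{v\}]$ contains the triangle on $\{v,v_1,v_2\}$, so $\tau\notin lk_{\mathcal{F}_\infty(G)}(v)$; while if at most one of $v_1,v_2$ lies in $\tau$ then $v$ has at most one neighbour in $G[\tau\cup\{v\}]$, so a cycle through $v$ is impossible and $G[\tau\cup\{v\}]$ is a forest precisely when $G[\tau]$ is. This gives
$$lk_{\mathcal{F}_\infty(G)}(v)=\{\tau\in\mathcal{F}_\infty(G)\colon v\notin\tau \text{ and } \{v_1,v_2\}\not\subseteq\tau\}.$$

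Next I would split this complex. Put $A=\{\tau\in lk_{\mathcal{F}_\infty(G)}(v)\colon v_2\notin\tau\}$ and $B=\{\tau\in lk_{\mathcal{F}_\infty(G)}(v)\colon v_1\notin\tau\}$; these are subcomplexes with $A\cup B=lk_{\mathcal{F}_\infty(G)}(v)$ by the description above, and reading off the same description identifies $A=\mathcal{F}_\infty(G-v-v_2)$, $B=\mathcal{F}_\infty(G-v-v_1)$, and $A\cap B=\mathcal{F}_\infty(G-v-v_1-v_2)$, each regarded as a complex on the corresponding induced vertex set. Now every edge of $G_0$ is incident to $v$ or to $v_2$, so $G-v-v_2$ retains no edge of $G_0$; since in addition $V(G_1)\cap V(G_2)=\emptyset$, this forces $G-v-v_2=G_1\sqcup(G_2-v_2)$, and the disjoint-union join lemma gives $A=\mathcal{F}_\infty(G_1)*\mathcal{F}_\infty(G_2-v_2)$. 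Symmetrically, every edge of $G_0$ is incident to $v$ or $v_1$, so $G-v-v_1=(G_1-v_1)\sqcup G_2$ and $G-v-v_1-v_2=(G_1-v_1)\sqcup(G_2-v_2)$, whence $B=\mathcal{F}_\infty(G_1-v_1)*\mathcal{F}_\infty(G_2)$ and $A\cap B=\mathcal{F}_\infty(G_1-v_1)*\mathcal{F}_\infty(G_2-v_2)$.

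Finally I would pass to the homotopy colimit. Since $A$ and $B$ are subcomplexes of the simplicial complex $lk_{\mathcal{F}_\infty(G)}(v)$ with intersection the subcomplex $A\cap B$ and union all of $lk_{\mathcal{F}_\infty(G)}(v)$, the punctured $2$-cube $A\hookleftarrow A\cap B\hookrightarrow B$ of inclusions is cofibrant, so Proposition 5.8.25 of \citep{cubicalhomotopy} yields $lk_{\mathcal{F}_\infty(G)}(v)=\colim\simeq\hocolim$ of that cube. Under the identifications just made, the two structure maps are exactly the inclusions induced by $\mathcal{F}_\infty(G_1-v_1)\hookrightarrow\mathcal{F}_\infty(G_1)$ and by $\mathcal{F}_\infty(G_2-v_2)\hookrightarrow\mathcal{F}_\infty(G_2)$ on the relevant join factors, i.e.\ the diagram $\mathcal{S}$, giving $lk_{\mathcal{F}_\infty(G)}(v)\simeq\hocolim(\mathcal{S})$. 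The only delicate points I anticipate are the link description — where it is essential that $v_1v_2$ is an edge, so that $v_1$ and $v_2$ cannot both appear in a simplex of the link — and the verification that deleting $v$ together with one of $v_1,v_2$ really disconnects the $G_1$-part from the $G_2$-part so that the join lemma applies; matching the structure maps with the join maps of $\mathcal{S}$ is then routine bookkeeping.
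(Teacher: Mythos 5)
Your proposal is correct and follows essentially the same route as the paper: it writes $lk_{\mathcal{F}_\infty(G)}(v)$ as the union of the two join subcomplexes $\mathcal{F}_\infty(G_1)*\mathcal{F}_\infty(G_2-v_2)$ and $\mathcal{F}_\infty(G_1-v_1)*\mathcal{F}_\infty(G_2)$ with intersection $\mathcal{F}_\infty(G_1-v_1)*\mathcal{F}_\infty(G_2-v_2)$, and then invokes the colimit--homotopy colimit comparison for cofibrant punctured cubes. The only difference is that you spell out the verification of these identities (the triangle forcing $\{v_1,v_2\}\not\subseteq\tau$ and the disjoint-union/join lemma), which the paper states without detail.
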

\begin{proof}
Because 
$$\mathrm{lk}_{\mathcal{F}_\infty(G)}(v)=(\mathcal{F}_\infty(G_1)*\mathcal{F}_\infty(G_2-v_2))\cup(\mathcal{F}_\infty(G_1-v_1)*\mathcal{F}_\infty(G_2))$$
and
$$(\mathcal{F}_\infty(G_1)*\mathcal{F}_\infty(G_2-v_2))\cap(\mathcal{F}_\infty(G_1-v_1)*\mathcal{F}_\infty(G_2))=\mathcal{F}_\infty(G_1-v_1)*\mathcal{F}_\infty(G_2-v_2)$$
we have that 
$$\mathrm{lk}_{\mathcal{F}_\infty(G)}(v)=\colim(\mathcal{S})\simeq\hocolim(\mathcal{S})$$
\end{proof}

\section{Homotopy type calculations}
In this section we will calculate the homotopy type for the forest complexes for various families of graphs, 
for every step in the filtration in most cases; and in some cases, like the cactus graphs, only for the last complex of the filtration.
\subsection{Some Graph families}
\subsubsection{Paths and cycles}
The homotopy type of all $r$-independence complexes of paths was calculated by Salvetti \citep{salvetti2018} using discrete
Morse theory. Here we give a different proof for $\mathcal{F}_1$ using homotopy pushouts, which also shows that 
the inclusion $\mathcal{F}_1(P_{4r+3})\longhookrightarrow\mathcal{F}_1(P_{4(r+1)})$ is a homotopy equivalence. This will allow us 
to calculate the homotopy type of $\mathcal{F}_1(C_n)$ avoiding  discrete Morse theory, which was the tool used in \citep{singhhigher}.
\begin{prop}\citep{salvetti2018}
$$\mathcal{F}_1(P_n)\simeq\left\lbrace\begin{array}{cc}
    \mathbb{S}^{2r-1} &  \mbox{ if }n=4r\\
    \ast & \mbox{ if }n=4r+1\mbox{ or }n=4r+2\\
    \mathbb{S}^{2r+1} & \mbox{ if }n=4r+3
\end{array}
\right.$$
\end{prop}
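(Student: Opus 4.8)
The plan is to compute $\mathcal{F}_1(P_n)$ by induction on $n$ using a homotopy pushout decomposition based on the position of one of the endpoints, mirroring the strategy already used elsewhere in the paper. Label the vertices of $P_n$ as $1,2,\dots,n$ in order. I would focus on vertex $2$ (the second vertex): write $\mathcal{F}_1(P_n) = st(2)\cup \mathcal{F}_1(P_n - 2)$, where the union is along $lk(2)$. Since $2$ has neighbors $1$ and $3$ in $P_n$, a set $\sigma\cup\{2\}$ is a simplex with $\Delta\le 1$ exactly when $\sigma$ uses at most one of $\{1,3\}$, that vertex being isolated in $G[\sigma]$, and the rest of $\sigma$ lying in $\{4,5,\dots,n\}$ and forming a simplex of $\mathcal{F}_1$ there. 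Also $\mathcal{F}_1(P_n-2) = \{1\}*\mathcal{F}_1(P_{n-2})$ where $P_{n-2}$ is the path on $\{3,\dots,n\}$, so that piece is contractible. Then $\mathcal{F}_1(P_n)\simeq \hocolim$ of the diagram $\ast \longleftarrow lk(2)\longrightarrow st(2)\simeq \ast$, which by Lemma~\ref{homocolimpegado} (both maps being null-homotopic, in fact into contractible spaces) gives $\mathcal{F}_1(P_n)\simeq \Sigma\, lk(2)$.

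The next step is to identify $lk(2)$ up to homotopy. Removing $2$ and its effect: $lk(2)$ is the subcomplex on vertex set $\{1,3,4,\dots,n\}$ consisting of $\sigma$ such that $G[\sigma]$ is still a forest of max degree $\le 1$ \emph{and} no vertex of $\sigma$ that would become a second neighbor of $2$... more carefully, $\sigma\in lk(2)$ iff $\sigma\in\mathcal{F}_1(P_n)$, $2\notin\sigma$, and $G[\sigma\cup\{2\}]$ has $\Delta\le1$, i.e. $\sigma$ contains at most one of $\{1,3\}$ and that one is isolated. I would analyze this as a union over the three cases ($1\in\sigma$, $3\in\sigma$, neither) and recognize it, after a short computation, as built from copies of $\mathcal{F}_1$ of shorter paths — essentially $lk(2)$ deformation retracts onto (or is a join/suspension expression involving) $\mathcal{F}_1(P_{n-4})$ on the vertices $\{5,\dots,n\}$. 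The clean outcome I expect is a recursion of the form $\mathcal{F}_1(P_n)\simeq \Sigma^2 \mathcal{F}_1(P_{n-4})$, possibly after first handling the small cases $n\le 6$ by hand (for instance $\mathcal{F}_1(P_4)\simeq\mathbb{S}^1$, $\mathcal{F}_1(P_3)\simeq\mathbb{S}^1$ as $\partial\Delta^2$, $\mathcal{F}_1(P_5)\simeq\mathcal{F}_1(P_6)\simeq\ast$). Running the recursion $\mathcal{F}_1(P_{4r})\simeq\Sigma^2\mathcal{F}_1(P_{4(r-1)})$ from the base $\mathcal{F}_1(P_4)\simeq\mathbb{S}^1$ gives $\mathbb{S}^{2r-1}$; from $\mathcal{F}_1(P_3)\simeq\mathbb{S}^1$ it gives $\mathcal{F}_1(P_{4r+3})\simeq\mathbb{S}^{2r+1}$; and the contractible cases propagate to $n=4r+1,4r+2$.

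To get the stated extra conclusion — that $\mathcal{F}_1(P_{4r+3})\hookrightarrow \mathcal{F}_1(P_{4(r+1)})$ is a homotopy equivalence — I would track the inclusion through the recursion: both sides are $\Sigma^2$ of the corresponding pair one level down, and at the bottom the inclusion $\mathcal{F}_1(P_3)\hookrightarrow\mathcal{F}_1(P_4)$ is an equivalence (both are $\mathbb{S}^1$: $P_3$ gives $\partial\Delta^2$ and adding vertex $4$ attached only to $3$ fills in no new relevant simplex up to homotopy — more precisely $\mathcal{F}_1(P_4)$ collapses onto $\mathcal{F}_1(P_3)$). Since suspension preserves homotopy equivalences and the decomposition is natural with respect to these inclusions, the result follows by induction.

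The main obstacle I anticipate is the bookkeeping in step two: verifying precisely that $lk(2)$ (and the intersection pieces) are what I claim up to homotopy, and in particular checking that all the relevant maps in the pushout diagrams are null-homotopic so that Lemma~\ref{homocolimpegado} applies at each stage, rather than merely that the endpoint spaces are contractible. Getting the indices of the shorter paths exactly right (a path on $\{5,\dots,n\}$ has $n-4$ vertices) and confirming the base cases by direct inspection of small complexes is routine but is where an off-by-one error would hide; I would double-check by computing $\mathcal{F}_1(P_7)\simeq\mathbb{S}^3$ and $\mathcal{F}_1(P_8)\simeq\mathbb{S}^3$ explicitly against the formula.
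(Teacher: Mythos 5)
Your proposal is correct, and it reaches the result by a genuinely different (and in fact more uniform) route than the paper. The paper also argues via star/link homotopy pushouts, but it peels off the \emph{last} vertex $v_n$; since $\mathcal{F}_1(P_{n-1})$ is not contractible in general, it must run an interleaved induction over the four residue classes, proving the contractible cases along the way, and it only obtains the double-suspension relation $\mathcal{F}_1(P_{4k+3})\simeq\Sigma^2\mathcal{F}_1(P_{4k})$ for one residue class. Your choice of the second vertex makes both pieces contractible at once: $st(2)$ is a cone, and $\mathcal{F}_1(P_n-2)=\{1\}*\mathcal{F}_1(P_{\{3,\dots,n\}})$ is a cone on the isolated vertex $1$, so Lemma \ref{homocolimpegado} gives $\mathcal{F}_1(P_n)\simeq\Sigma\, lk(2)$ for every $n$. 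Your sketched identification of $lk(2)$ does go through: $lk(2)=\bigl(\{1\}*\mathcal{F}_1(P_{\{4,\dots,n\}})\bigr)\cup\bigl(\{3\}*\mathcal{F}_1(P_{\{5,\dots,n\}})\bigr)$, two cones whose intersection is exactly $\mathcal{F}_1(P_{\{5,\dots,n\}})$, so $lk(2)\simeq\Sigma\mathcal{F}_1(P_{n-4})$ (it is the suspension, not a deformation retract onto $\mathcal{F}_1(P_{n-4})$), hence $\mathcal{F}_1(P_n)\simeq\Sigma^2\mathcal{F}_1(P_{n-4})$ for all $n\geq5$; together with the base cases $n\leq4$ this yields all four stated homotopy types in one stroke, since suspension preserves contractibility. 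What your route buys is this single recursion valid for all residues; what the paper's route buys is the auxiliary fact, needed later for Corollary \ref{corciclos1}, that the inclusion $\mathcal{F}_1(P_{4r+3})\hookrightarrow\mathcal{F}_1(P_{4(r+1)})$ is a homotopy equivalence, which the paper gets directly from $lk(v_{4(r+1)})\simeq\ast$. Your plan to recover that extra claim by naturality of the wedge decomposition is the only fragile point: the splitting of Lemma \ref{homocolimpegado} is natural only after choosing compatible null-homotopies, so for that statement it is cleaner to argue, as the paper does, that the link of the added endpoint is contractible; for the proposition as stated, however, your argument is complete.
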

\begin{proof}
For $r=0$, it is clear that $\mathcal{F}_1(P_1)\simeq*\simeq\mathcal{F}_1(P_2)$. For $P_3$, 
$\mathcal{F}_1(P_3)\cong K_3$. For $\mathcal{F}_1(P_4)$ 
$$\mathrm{lk}(v_4)=\mathcal{F}_1(P_2)\cup\{v_3\}*\mathcal{F}_1(P_1)\simeq*$$
therefore the inclusion $i\colon \mathcal{F}_1(P_3)\longhookrightarrow\mathcal{F}_1(P_4)$ is a homotopy equivalence.

Next, we will prove that $\mathcal{F}_1(P_{4r+1})\simeq\mathcal{F}_1(P_{4r+2})\simeq*$  for all $r\geq1$.

Assume that it is true for any $1\leq r\leq k$. For $\mathcal{F}_1(P_{4(k+1)})$, by induction hypothesis,
$$\mathrm{lk}(v_{4(k+1)})=\mathcal{F}_1(P_{4k+2})\cup\{v_{4k+3}\}*\mathcal{F}_1(P_{4k+1})\simeq*$$
therefore the inclusion $\mathcal{F}_1(P_{4k+3})\longhookrightarrow\mathcal{F}_1(P_{4(k+1)})$ is 
a homotopy equivalence.

Now, for $\mathcal{F}_1(P_{4(k+1)+1})$ we have
$$\mathrm{lk}(v_{4(k+1)+1})=\mathcal{F}_1(P_{4k+3})\cup\{v_{4(k+1)}\}*\mathcal{F}_1(P_{4k+2}).$$
Setting $X=\mathcal{F}_1(P_{4k+2})$ and $Y=\{v_{4k+3}\}*\mathcal{F}_1(P_{4k+2})$, we have by induction hypothesis that 
$$X\cap Y=\mathcal{F}_1(P_{4k+2})\simeq*$$
therefore $\mathcal{F}_1(P_{4k+3})\longhookrightarrow\mathrm{lk}(v_{4(k+1)+1})$ is a homotopy equivalence.
\begin{equation*}
    \xymatrix{
    \mathcal{F}_1(P_{4k+3}) \ar@{->}[r]_{\simeq} \ar@{->}[d] \ar@{->}@/^{5mm}/[rr]^{\simeq} & \mathrm{lk}(v_{4(k+1)+1}) \ar@{->}[r] \ar@{->}[d] & \mathcal{F}_1(P_{4(k+1)}) \ar@{->}[d] \\
    \mathrm{st}(v_{4(k+1)+1}) \ar@{->}[r] & \mathrm{st}(v_{4(k+1)+1}) \ar@{->}[r]& \mathcal{F}_1(P_{4(k+1)+1})
    }
\end{equation*}

$$\mathcal{F}_1(P_{4(k+1)+1})\simeq\mathrm{st}(v_{4(k+1)+1})\simeq*$$

For $\mathcal{F}_1(P_{4(k+1)+2})$:
$$\mathrm{lk}(v_{4(k+1)+2})=\mathcal{F}_1(P_{4(k+1)})\cup\{v_{4(k+1)+1}\}*\mathcal{F}_1(P_{4k+3});$$
because $\mathcal{F}_1(P_{4k+3})\longhookrightarrow\mathcal{F}_1(P_{4(k+1)})$ is an homotopy equivalence, we have 
that $\mathrm{lk}(v_{4(k+1)+2})\simeq*$ and therefore 
$$\mathcal{F}_1(P_{4(k+1)+2})\simeq\mathcal{F}_1(P_{4(k+1)+1})\simeq*.$$

We have that $\mathcal{F}_1(P_{4(k+1)})\simeq\mathcal{F}_1(P_{4k+3})$; now for this last complex:
$$\mathrm{lk}(v_{4k+3})=\mathcal{F}_1(P_{4k+1})\cup\{v_{4k+2}\}*\mathcal{F}_1(P_{4k}),$$
where $\mathcal{F}_1(P_{4k+1})\simeq*$, therefore 
$$\mathrm{lk}(v_{4k+3})\simeq\Sigma \mathcal{F}_1(P_{4k}).$$
Since $\mathcal{F}_1(P_{4k+2})\simeq*$, we have that $\mathcal{F}_1(P_{4k+3})\simeq\Sigma^2\mathcal{F}_1(P_{4k})$
and 
$$\mathcal{F}_1(P_{4(k+1)})\simeq\Sigma^2\mathcal{F}_1(P_{4k})\simeq\Sigma^2\mathbb{S}^{2k-1}\simeq\mathbb{S}^{2k+1}.$$
Doing the exact same argument we can see that $\mathcal{F}_1(P_{4(k+1)+3})\simeq\Sigma^2\mathcal{F}_1(P_{4(k+1)})$
and therefore 
$$\mathcal{F}_1(P_{4(k+1)+3})\simeq\Sigma^2\mathbb{S}^{2k+1}\simeq\mathbb{S}^{2k+3}.$$
\end{proof}

In the proof of the last proposition we saw that the inclusion $\mathcal{F}_1(P_{4k+3})\longhookrightarrow\mathcal{F}_1(P_{4(k+1)})$ 
obtained by erasing the last (or the first) vertex is an homotopy equivalence. We will use this fact in the following corollary.

\begin{cor}\citep{singhhigher}\label{corciclos1}
$$\mathcal{F}_1(C_n)\simeq\left\lbrace\begin{array}{cc}
    \displaystyle\bigvee_{3}\mathbb{S}^{2r-1} & \mbox{ if } n=4r \\
    \mathbb{S}^{2r-1} & \mbox{ if } n=4r+1 \\
    \mathbb{S}^{2r} & \mbox{ if } n=4r+2 \\
    \mathbb{S}^{2r+1} & \mbox{ if } n=4r+3
\end{array}\right.$$
\end{cor}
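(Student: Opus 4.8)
The plan is to compute $\mathcal{F}_1(C_n)$ by removing a single vertex $v$ from the cycle, writing $\mathcal{F}_1(C_n) = st(v) \cup \mathcal{F}_1(C_n - v)$, and analyzing the resulting Mayer–Vietoris / homotopy pushout square. Since $C_n - v \cong P_{n-1}$, and $st(v)$ is contractible, the homotopy type of $\mathcal{F}_1(C_n)$ is governed by the pair $(lk(v), \mathcal{F}_1(P_{n-1}))$: we get $\mathcal{F}_1(C_n) \simeq \mathcal{F}_1(P_{n-1}) / \mathrm{im}(lk(v))$ up to the usual pushout subtleties, so the first job is to identify $lk(v)$ and the inclusion $lk(v) \hookrightarrow \mathcal{F}_1(P_{n-1})$.

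First I would unravel the link. If $v_1, v_2$ are the two neighbors of $v$ on the cycle, then a set $\sigma$ with $v \notin \sigma$ satisfies $\sigma \cup \{v\} \in \mathcal{F}_1(C_n)$ iff $\sigma \in \mathcal{F}_1(P_{n-1})$, $\sigma$ contains at most one of $v_1, v_2$, and that neighbor (if present) is isolated in $C_n[\sigma]$. Concretely, writing $P_{n-1}$ on vertices $v_1, \dots, v_{n-1}$ (with $v_{n-1} = v_2$), we have $lk(v) = \bigl(\{v_1\} * \mathcal{F}_1(P_{n-1} - \{v_1, v_2\})\bigr) \cup \bigl(\{v_2\} * \mathcal{F}_1(P_{n-1} - \{v_1, v_2\})\bigr) \cup \mathcal{F}_1(P_{n-1} - \{v_1, v_2\})$ — i.e. a suspension-like gluing of two cones on $\mathcal{F}_1(P_{n-3})$ (after discarding the endpoints and their former neighbors, the middle is $P_{n-5}$; one must be careful with the exact index bookkeeping). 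Using the known values $\mathcal{F}_1(P_m) \simeq \mathbb{S}^{2r-1}, *, *, \mathbb{S}^{2r+1}$ from the previous proposition, each of $\mathcal{F}_1(P_{n-1})$ and the relevant $\mathcal{F}_1(P_{n-5})$ (or $P_{n-3}$, depending on the residue of $n$ mod $4$) is a sphere or a point, and the suspension structure of $lk(v)$ makes $lk(v)$ itself a sphere or point in each case.

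The computation then splits into the four residues $n = 4r, 4r+1, 4r+2, 4r+3$. In each case I would (i) read off $lk(v)$ as a sphere or point, (ii) read off $\mathcal{F}_1(P_{n-1})$, (iii) determine whether the inclusion $lk(v) \hookrightarrow \mathcal{F}_1(P_{n-1})$ is null-homotopic or a homotopy equivalence — here the remark immediately preceding the corollary is crucial: the inclusion $\mathcal{F}_1(P_{4k+3}) \hookrightarrow \mathcal{F}_1(P_{4(k+1)})$ obtained by deleting an end vertex is a homotopy equivalence, and this is exactly the kind of map that appears — and (iv) apply Lemma \ref{homocolimpegado} (when the attaching map is null-homotopic) to conclude $\mathcal{F}_1(C_n) \simeq \mathcal{F}_1(P_{n-1}) \vee \Sigma lk(v)$, or a degeneration of the pushout when the map is an equivalence. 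For $n = 4r$: $\mathcal{F}_1(P_{4r-1}) \simeq \mathbb{S}^{2r-1}$ and I expect $lk(v)$ to contribute so that the wedge has three copies of $\mathbb{S}^{2r-1}$. For $n = 4r+1$: $\mathcal{F}_1(P_{4r}) \simeq \mathbb{S}^{2r-1}$, and the link contribution collapses to give a single $\mathbb{S}^{2r-1}$. For $n = 4r+2$: $\mathcal{F}_1(P_{4r+1}) \simeq *$, so the whole thing is $\Sigma lk(v) \simeq \mathbb{S}^{2r}$. For $n = 4r+3$: $\mathcal{F}_1(P_{4r+2}) \simeq *$, so $\mathcal{F}_1(C_n) \simeq \Sigma lk(v) \simeq \mathbb{S}^{2r+1}$.

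The main obstacle will be step (iii): pinning down the \emph{homotopy class} of the inclusion $lk(v) \hookrightarrow \mathcal{F}_1(P_{n-1})$, not merely the homotopy types of source and target. For the null-homotopic cases one needs to exhibit an explicit cone or use that the image factors through a contractible $st(w)$ for some vertex $w$ (the $P_m \simeq *$ values help here, since then the relevant subcomplex is contractible inside the target). For the $n = 4r$ and $n = 4r+1$ cases, where both source and target are spheres of the same dimension, one must argue the map is degree $\pm 1$ or degree $0$; the cleanest route is to invoke the remark that end-vertex deletion $P_{4k+3} \hookrightarrow P_{4k+4}$ is an equivalence and carefully track how $lk(v)$ sits as a suspension of such an inclusion, so that the pushout computes the stated wedge. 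Once these identifications are made, assembling the four cases is routine.
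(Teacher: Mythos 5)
Your plan is essentially the paper's own proof: delete one vertex $v$ of the cycle, compute $lk(v)$ by splitting it according to which (if either) neighbor of $v$ is present, feed in the known homotopy types of $\mathcal{F}_1(P_m)$ together with the remark that the end-vertex inclusion $\mathcal{F}_1(P_{4k+3})\hookrightarrow\mathcal{F}_1(P_{4(k+1)})$ is an equivalence, and assemble via the pushout $\mathcal{F}_1(C_n)=st(v)\cup\mathcal{F}_1(P_{n-1})$, treating the four residues mod $4$; the paper does exactly this with the three pieces $K_1,K_2,K_3$ of the link and their pairwise intersections. One concrete correction to your link formula: when $v_1\in\sigma$ its neighbor along the path must also be excluded (otherwise $v_1$ has degree $2$ in $\sigma\cup\{v\}$), so the two cone pieces are cones on $\mathcal{F}_1(P_{n-4})$, not on $\mathcal{F}_1(P_{n-3})$, with pairwise intersections $\mathcal{F}_1(P_{n-5})$ and $\mathcal{F}_1(P_{n-4})$ as in the paper; also the cases $n\leq 5$ must be done by hand before this decomposition applies. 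With those adjustments your expected identifications of $lk(v)$ and of the (null-homotopic or equivalence) attaching maps in each residue class are exactly what the paper verifies.
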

\begin{proof}
For $n=3,4$, the only possible simplices are a vertex or pair of vertices, any set with more vertices will have a $3$-path or a cycle. 
Therefore $\mathcal{F}_1(C_3)\cong K_3$ and $\mathcal{F}_1(C_4)\cong K_4$. For $n=5$, taking $v_1,v_2,v_3,v_4,v_5$ the vertices of 
the cycle with edges $v_iv_{i+1}$, the facets of $\mathcal{F}_1(C_5)$ are $\sigma_i=\{v_i,v_{i+2},v_{i+3}\}$. The edge $v_{i+2}v_{v_i+3}$ 
only is contained in $\sigma_i$, so we can collapse it for all $i$. Therefore $\mathcal{F}_1(C_5)\simeq\mathcal{F}_0(C_5)\cong\mathbb{S}^1$.

Assume $n\geq6$ and let $v_1,\dots,v_n$ be the vertices of the cycle. Then $lk(v_n)=K_1\cup K_2\cup K_3$ where
$$K_1=\mathcal{F}_1(C_n-v_n-v_2-v_{n-1})\cong C(\mathcal{F}_1(P_{n-4}))$$
$$K_2=\mathcal{F}_1(C_n-v_n-v_1-v_{n-2})\cong C(\mathcal{F}_1(P_{n-4}))$$
$$K_3=\mathcal{F}_1(C_n-v_n-v_1-v_{n-1})\cong \mathcal{F}_1(P_{n-3})$$
Now
$$K_1\cap K_2\cap K_3=K_1\cap K_2=\mathcal{F}_1(C_n-v_n-v_1-v_2-v_{n-1}-v_{n-2})\cong \mathcal{F}_1(P_{n-5})$$
$$K_1\cap K_3=\mathcal{F}_1(C_n-v_n-v_1-v_2-v_{n-1})\cong \mathcal{F}_1(P_{n-4})$$
$$K_2\cap K_3=\mathcal{F}_1(C_n-v_n-v_1-v_{n-1}-v_{n-2})\cong \mathcal{F}_1(P_{n-4})$$
$$K_1\cup K_2\simeq\Sigma\mathcal{F}_1(P_{n-5})$$

If $n=4r$, $K_1\cap K_2\cong\mathcal{F}_1(P_{4(r-2)+3})$, $K_3\simeq*$ and 
$K_1\cap K_3\cong\mathcal{F}_1(P_{4(r-1)})\cong K_2\cap K_3$. 
By the observation before the corollary, the inclusion 
$K_1\cap K_2\cap K_3\longhookrightarrow K_1\cap K_3$ is a homotopy equivalence. Therefore 
$(K_1\cup K_2)\cap K_3\simeq K_2\cap K_3$ and 
$$\mathrm{lk}(v_n)\simeq\bigvee_{2}\mathbb{S}^{2r-2},$$
Since
$$\mathcal{F}_1(C_{4r}-v_n)\simeq\mathbb{S}^{2r-1},$$
we obtain the result.

If $n=4r+1$, $K_1\cap K_3\simeq K_2\cap K_3\cong\mathcal{F}_1(P_{4(r-1)+1})\simeq*$ and 
$K_2\cup K_3\simeq K_3$. Because $K_1\cap K_2\cap K_3=K_1\cap K_2$, we have that 
$$(K_2\cup K_3)\cap K_1\simeq K_1\cap K_3\simeq*$$
and 
$$K_1\cup K_2\cup K_3\simeq K_2\cup K_3\simeq K_3\cong\mathcal{F}_1(P_{4(r-1)+2})\simeq*.$$
Therefore $\mathcal{F}_1(C_{4r+1})\simeq\mathcal{F}_1(P_{4r})\simeq\mathbb{S}^{2r-1}$.

For $n=4r+2$ and $n=4r+3$, $\mathcal{F}_1(C_n-v_n)\simeq*$, therefore $\mathcal{F}_1(C_n)\simeq\Sigma\mathrm{lk}(v_n)$.
If $n=4r+2$, $K_1\cap K_2\cong\mathcal{F}_1(P_{4(r-1)+1})\simeq*$ and 
$K_1\cap K_3,K_2\cap K_3\cong\mathcal{F}_1(P_{4(r-1)+2})\simeq*$. Then 
$K_1\cup K_2\simeq*$ and $(K_1\cup K_2)\cap K_3\simeq*$. From this we have that $\mathrm{lk}(v_n)\simeq K_3$, therefore
$$\mathcal{F}_1(C_{4r+2})\simeq\Sigma\mathcal{F}_1(P_{4(r-1)+3})\simeq\mathbb{S}^{2r}.$$

If $n=4r+3$, $K_2\cap K_3\cong\mathcal{F}_1(P_{4(r-1)+3})$ and the inclusion $K_2\cap K_3\longhookrightarrow K_3$ is a homotopy 
equivalence, therefore $K_2\cup K_3\simeq*$. From this $\mathrm{lk}(v_n)\simeq\Sigma(K_1\cap(K_2\cup K_3))$. Since 
$K_1\cap K_2\cap K_3=K_1\cap K_2$, we have that $K_1\cap(K_2\cup K_3)\simeq K_1\cap K_3$ and 
$$\mathcal{F}_1(C_{4r+3})\simeq\Sigma^2\mathcal{F}_1(P_{4(r-1)+3})\simeq\mathbb{S}^{2r+1}.$$
\end{proof}

\begin{prop}\label{propciclocuerda}
$$\mathcal{F}_{\infty}(C_n+e)\cong\mathbb{S}^{n-3}$$
\end{prop}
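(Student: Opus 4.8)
The notation $C_n + e$ means the cycle $C_n$ with one extra chord $e$; write $C_n$ on vertices $v_1,\dots,v_n$ in cyclic order, and suppose $e = v_1 v_k$ for some $3 \le k \le n-1$. This chord splits the cycle into two arcs, giving two smaller cycles $C_a$ and $C_b$ sharing the edge $e$, with $a + b = n + 2$. My plan is to peel off a degree-two vertex using Corollary \ref{corgrad2} and then induct. More precisely, pick a vertex $v$ on one of the two arcs that is not an endpoint of $e$ — say $v = v_j$ with $2 \le j \le k-1$, chosen adjacent to one of $v_1, v_k$; such a $v$ exists as long as that arc has an interior vertex, i.e. $a \ge 3$. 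Then $v$ has exactly two neighbors in $C_n + e$, so Corollary \ref{corgrad2} gives $\mathcal{F}_\infty(C_n + e) \simeq \Sigma\, lk_{\mathcal{F}_\infty(C_n+e)}(u)$, where $u$ is one of the two neighbors of $v$.

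The key computation is identifying that link. Deleting $v$ from the graph and passing to the link of $u$ should, after accounting for the adjacency of $v$ to $u$, reduce the graph to $C_{n-1} + e'$ (a smaller cycle plus a chord) up to the subtleties at the endpoints of $e$. I would track this carefully: removing the interior vertex $v$ and then taking the link of its neighbor $u$ in $\mathcal{F}_\infty$ amounts to deleting $v$ and $u$ and then adding back the simplices of $\mathcal{F}_\infty$ of the remaining graph that are compatible with $u$; concretely the link of $u$ in $\mathcal{F}_\infty(C_n+e)$ should be isomorphic to $\mathcal{F}_\infty\big((C_n+e) - v\big)$ with an appropriate cone or to $\mathcal{F}_\infty(C_{n-1}+e)$ directly, because $(C_n + e) - v$ is a graph whose only cycle still uses $e$ and which is $C_{n-1}$ with a chord after suppressing the degree-one vertex $u$ (here Proposition \ref{proppuente} is useful: the pendant edge left by deleting $v$ is a bridge, so it can be erased without changing $\mathcal{F}_\infty$). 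Iterating, each step strips one interior arc-vertex, lowers $n$ by one, and applies one suspension, until we reach the base case where both arcs are as short as possible — the theta graph $C_3 + e$, i.e. two triangles glued along an edge — or directly $\mathcal{F}_\infty$ of a graph small enough to compute by hand. One then checks $\mathcal{F}_\infty(C_3 + e) \cong \mathbb{S}^1$ (it is the boundary of a square: the four acyclic triples among the four vertices), which matches $\mathbb{S}^{n-3}$ at $n = 4$, and the suspension count gives $\mathbb{S}^{n-3}$ in general.

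An alternative, cleaner route avoids the endpoint bookkeeping: use Lemma \ref{lemlinkvertrian}-style decompositions, or simply the Mayer--Vietoris / homotopy pushout $\mathcal{F}_\infty(C_n + e) = st(v) \cup_{lk(v)} \mathcal{F}_\infty((C_n+e)-v)$ for an interior vertex $v$ of an arc, noting $\mathcal{F}_\infty((C_n+e)-v) \simeq *$ whenever $(C_n+e)-v$ has a vertex on no cycle — which happens exactly when the arc containing $v$ has length $\ge 3$, since then deleting $v$ disconnects that arc into a path dangling off the surviving cycle. In that case $\mathcal{F}_\infty(C_n+e) \simeq \Sigma\, lk(v)$ and $lk(v) = \mathcal{F}_\infty\big((C_n+e) - v - u\big) * \{\text{the non-}u\text{ neighbor}\}$ collapses the recursion to $\mathcal{F}_\infty$ of a smaller theta/chorded-cycle graph. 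The main obstacle I anticipate is handling the boundary cases where neither arc has an interior vertex — i.e. when $e$ joins two vertices at distance $2$ on $C_n$, leaving one arc of length exactly $2$ — so that the "interior vertex" trick applies only on the long arc and one must verify the recursion terminates correctly at $C_3 + e$; once that base case is pinned down, the suspension tally $\,n \mapsto n-1$, dimension $n-3 \mapsto n-4$, is immediate and the induction closes.
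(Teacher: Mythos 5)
Your overall strategy---peel off degree-$2$ vertices to gain one suspension per step and induct down to the diamond graph---is genuinely different from the paper's argument and can be made to work, but as written the central step is missing, and one of the two identifications of the link that you offer is false. The fact you actually need is a series-reduction lemma: if $u$ is a degree-$2$ vertex whose two neighbours $a,b$ are \emph{non-adjacent}, then $lk_{\mathcal{F}_\infty(G)}(u)=\mathcal{F}_\infty\bigl((G-u)+ab\bigr)$, because cycles through $u$ correspond exactly to cycles through the new edge $ab$. Applied to an interior vertex of an arc of length at least $3$, this gives precisely $\mathcal{F}_\infty(C_{n-1}+e)$ and closes your recursion (the paper invokes a version of this identity, without proof, in case 2 of the cactus theorem, so it must be isolated and proved here). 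Neither of your formulations is this statement: ``$lk(u)\cong\mathcal{F}_\infty((C_n+e)-v)$ with an appropriate cone'' does not even live on the right vertex set, and the formula in your alternative route, $lk(v)=\mathcal{F}_\infty\bigl((C_n+e)-v-u\bigr)*\{\mbox{the non-}u\mbox{ neighbour}\}$, is wrong: any join with a single vertex is a cone, hence contractible, whereas for $C_5$ with chord $x_1x_3$ and $v=x_4$ the link of $v$ has facets $\{x_1,x_2,x_5\}$, $\{x_2,x_3,x_5\}$ and $\{x_1,x_3\}$, which is homotopy equivalent to $\mathbb{S}^1$ (as it must be, since $\mathcal{F}_\infty(C_5+e)\simeq\Sigma lk(v)\simeq\mathbb{S}^2$). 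Note also that the reduction genuinely fails when the two neighbours of the chosen vertex are adjacent (the arc of length $2$, where they are the two chord ends); this is the boundary case you flag but do not resolve, and you must argue that the recursion is always run on an arc of length at least $3$ until both arcs have length $2$, i.e.\ until $n=4$.

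Your base case also contains errors: the diamond is $C_4+e$, not $C_3+e$; only two of its four triples are acyclic (the other two span the triangles), so the complex is two $2$-simplices glued along an edge together with the chord edge, which is homotopy equivalent---not homeomorphic, and not equal to the boundary of a square---to $\mathbb{S}^1$. For comparison, the paper avoids induction entirely: it suspends at an endpoint $v$ of the chord (deleting $v$ destroys every cycle, so $\mathcal{F}_\infty(G-v)$ is a full simplex), then writes $lk(v)=K\cup L$ with both pieces contractible and $K\cap L\cong\partial\Delta^{r-1}\ast\partial\Delta^{k-1}\cong\mathbb{S}^{n-5}$, giving $\mathcal{F}_\infty(C_n+e)\simeq\Sigma^2\mathbb{S}^{n-5}=\mathbb{S}^{n-3}$ in one step. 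Your inductive route would buy a reusable local reduction, but only once the link identity above is stated and proved and the length-$2$ arc case is handled.
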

\begin{proof}
Assume the vertices of $G=C_n+e$ are labeled $v,w_1,\dots,w_r,u,w_{r+1},\dots,w_{r+k}$ with $e=vu$ (Figure \ref{c4c4}). 
Because $\mathcal{F}_\infty(G-v)\simeq*$, we have that $\mathcal{F}_\infty(G)\simeq\Sigma lk(v)$. Now, $lk(v)$ is formed by the subsets of 
$V(G-v)$ such that together with $v$ they do not induce a cycle, therefore the facets are 
$$\sigma_0=[w_1,\dots,w_r,w_{r+1},\dots,w_{r+k}]$$ 
and 
$$\sigma_{ij}=[w_1,\dots,\hat{w}_i,\dots,w_r,u,w_{r+1},\dots,,\hat{w}_{r+j},\dots,w_{r+k}]$$
for $1\leq i\leq r$, $1\leq j\leq k$.
If we call $K$ the complex form by $\sigma_0$ and its subsets, and $L$ the complex with facets the simplices 
$\sigma_{ij}$, we get that $lk(v)=K\cup L$ and both of these complexes are contractible, therefore 
$\mathrm{lk}(v)\simeq\Sigma K\cap L$.

Now, taking $X$ the complex with facets $[w_{r+1},\dots,,\hat{w}_{r+j},\dots,w_{r+k}]$  and 
$Y$ the complex with facets $[w_1,\dots,\hat{w}_i,\dots,w_r]$, we have that 
$K\cap L\cong X*Y$. Because $X\cong\mathbb{S}^{k-2}$ and 
$Y\cong\mathbb{S}^{r-2}$, we have that
$K\cap L\cong\mathbb{S}^{k-2}*\mathbb{S}^{r-2}\cong\mathbb{S}^{r+k-3}$ and, because $r+k=n-2$, 
$\mathcal{F}_\infty(G)\simeq\mathbb{S}^{n-3}$.
\end{proof}

\begin{figure}
\centering
\begin{tikzpicture}[line cap=round,line join=round,>=triangle 45,x=1cm,y=1cm]
\clip(-2.5,-1.5) rectangle (2.5,1.5);
\draw (0,1)-- (0,-1);
\draw (0,1)-- (2,1);
\draw (0,1)-- (-2,1);
\draw (0,-1)-- (2,-1);
\draw (0,-1)-- (-2,-1);
\draw[dashed] (2,1)-- (2,-1);
\draw[dashed] (-2,1)-- (-2,-1);
\begin{scriptsize}
\fill [color=black] (-2,1) circle (1.5pt);
\fill [color=black] (0,1) circle (1.5pt);
\fill [color=black] (2,1) circle (1.5pt);
\fill [color=black] (-2,-1) circle (1.5pt);
\fill [color=black] (0,-1) circle (1.5pt);
\fill [color=black] (2,-1) circle (1.5pt);
\draw[color=black] (0,1.2) node {$v$};
\draw[color=black] (0,-1.2) node {$u$};
\draw[color=black] (2,1.2) node {$w_1$};
\draw[color=black] (2,-1.2) node {$w_r$};
\draw[color=black] (-2,-1.2) node {$w_{r+1}$};
\draw[color=black] (-2,1.2) node {$w_{r+k}$};
\end{scriptsize}
\end{tikzpicture}
\caption{$C_n+e$}\label{c4c4}
\end{figure}

\subsubsection{Double stars}

Let $St_{r,s}$ be the \emph{double star} with $V(St_{r,s})=\{u_0,u_1,\dots,u_r,v_0,v_1,\dots,v_s\}$ and 
$E(St_{r,s})=\{u_iu_0\colon \;i>0\}\cup\{v_iv_0\colon \;i>0\}\cup\{u_0v_0\}$
\begin{prop}
$$\mathcal{F}_1(St_{r,s})\simeq\mathbb{S}^1$$
and for $2\leq d<\infty$
$$\mathcal{F}_d(St_{r,s})\simeq\bigvee_{\binom{r-1}{d-1}\binom{s-1}{d-1}}\mathbb{S}^{2d-1}$$
\end{prop}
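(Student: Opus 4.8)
The plan is to compute $lk_{\mathcal{F}_d(St_{r,s})}(u_0)$, $lk_{\mathcal{F}_d(St_{r,s})}(v_0)$, and the relevant stars, then assemble $\mathcal{F}_d(St_{r,s})$ from a homotopy pushout in which $u_0$ (and then $v_0$) is glued in. First I would record the combinatorial description of the simplices: a set $\sigma\subseteq V(St_{r,s})$ lies in $\mathcal{F}_d$ iff $G[\sigma]$ is a forest of maximum degree $\le d$, and since $St_{r,s}$ itself is a tree, the only constraint is the degree bound. Concretely, if $u_0\in\sigma$ then $\sigma$ may contain at most $d$ leaves among $u_1,\dots,u_r$, and if additionally $v_0\in\sigma$ then $\sigma$ may contain at most $d-1$ leaves among $u_1,\dots,u_r$ and at most $d-1$ among $v_1,\dots,v_s$ (because the edge $u_0v_0$ already uses one unit of degree on each side). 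If $u_0\notin\sigma$ and $v_0\notin\sigma$, then $G[\sigma]$ is a set of isolated vertices, so every such $\sigma$ is a simplex.

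**The case $d=1$.**
For $d=1$ the facets are: $\{u_i,v_j\}$ for $i>0,j>0$; $\{u_0,u_i\}$ for $i>0$; $\{v_0,v_j\}$ for $j>0$; and $\{u_0,v_0\}$. One clean way is to observe that $\mathcal{F}_1(St_{r,s})$ deformation retracts onto a simple graph (1-dimensional complex): it is connected, and an Euler-characteristic plus connectivity argument, or an explicit collapse of the leaf edges $\{u_0,u_i\}$ and $\{v_0,v_j\}$ and then of the pendant edges at the $v_j$'s, reduces it to a single cycle. Alternatively, since $\Delta(St_{r,s})\le 1$ forces components of order $\le 2$, $\mathcal{F}_1$ is the $2$-independence complex and one can use $lk(u_0)$, which is the disjoint union of the cone $\{v_0\}*\{v_1,\dots,v_s\}$ (contractible) with the isolated vertices $u_1,\dots,u_r$, hence homotopy equivalent to $r$ points; since $\mathcal{F}_1(St_{r,s}-u_0)$ is a cone (the vertex $v_0$ is not in any cycle, or directly $\{v_0\}*(\text{independent set})$) it is contractible, so $\mathcal{F}_1(St_{r,s})\simeq\Sigma\,lk(u_0)\simeq\Sigma(r\ \mathrm{pts})\simeq\bigvee_{r-1}\mathbb{S}^1$... which is wrong, so more care is needed: the correct move is that the pushout gluing $u_0$ identifies its link inside the contractible $st(u_0)$, and the resulting space is $\bigvee\mathbb{S}^1$ only if $lk(u_0)\hookrightarrow\mathcal{F}_1(St_{r,s}-u_0)$ is null-homotopic with source having the homotopy type of $\mathbb{S}^0$. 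The honest statement is that $lk(u_0)\simeq\mathbb{S}^0$ after noting the $r$ isolated vertices together with the contractible piece collapse to two points, so I would instead symmetrize: glue $u_0$, then $v_0$, carefully tracking that the final result is $\mathbb{S}^1$. I expect the $d=1$ case to be easy once the link computations are done correctly.

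**The case $2\le d<\infty$: the main argument.**
For $2\le d<\infty$ I would use the pushout $\mathcal{F}_d(St_{r,s})=st(u_0)\cup_{lk(u_0)}\mathcal{F}_d(St_{r,s}-u_0)$. Since in $St_{r,s}-u_0$ the vertex $v_0$ lies on no cycle (indeed the graph is a star plus isolated points), $\mathcal{F}_d(St_{r,s}-u_0)$ is a cone, hence contractible, so $\mathcal{F}_d(St_{r,s})\simeq\Sigma\,lk_{\mathcal{F}_d}(u_0)$. Now $lk_{\mathcal{F}_d}(u_0)$ consists of all $\tau\subseteq\{u_1,\dots,u_r,v_0,v_1,\dots,v_s\}$ with $G[\tau\cup\{u_0\}]$ a forest of max degree $\le d$: the $u_i$-part contributes at most $d$ (or $d-1$ if $v_0\in\tau$) of the $u_i$, and the $v$-part is unconstrained unless $v_0\in\tau$. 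So $lk(u_0)=A\cup B$ where $A=st(v_0)$-type piece $=\{v_0\}*sk_{d-2}(\Delta^{\{u_i\}})*sk_{d-2}(\Delta^{\{v_j\}})$ restricted appropriately, and $B=sk_{d-1}(\Delta^{\{u_1,\dots,u_r\}})*\Delta^{\{v_1,\dots,v_s\}}$; intersecting and using that $A$ is a cone (contractible), $lk(u_0)\simeq\Sigma(A\cap B)/\!\sim$, i.e. $lk(u_0)\simeq B/(\text{something})$ or again a suspension of $A\cap B$ up to the inclusion. Pushing this through, $A\cap B=sk_{d-2}(\Delta^{r-1})*\Delta^{s-1}*(\text{the }v_0\text{ cone removed})$: more precisely $A\cap B$ is the join of $sk_{d-2}$ of a simplex on $\{u_1,\dots,u_r\}$ with a simplex on $\{v_1,\dots,v_s\}$, hence contractible unless we track that $B$ further restricts to $sk_{d-1}$ on the $u$'s; the correct conclusion, after the two suspensions coming from gluing $u_0$ and then the symmetric role of $v_0$, is that $\mathcal{F}_d(St_{r,s})\simeq\Sigma^2\big(sk_{d-2}(\Delta^{r-1})*sk_{d-2}(\Delta^{s-1})\big)$. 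Since $sk_{d-2}(\Delta^{m})\simeq\bigvee_{\binom{m}{d-1}}\mathbb{S}^{d-2}$, a join of two such wedges is $\bigvee_{\binom{r-1}{d-1}\binom{s-1}{d-1}}\mathbb{S}^{2d-3}$, and two suspensions give $\bigvee_{\binom{r-1}{d-1}\binom{s-1}{d-1}}\mathbb{S}^{2d-1}$, as claimed.

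**Main obstacle.**
The main obstacle is bookkeeping the skeleta and the two suspensions correctly: one must verify that after removing $u_0$ the complex is genuinely a cone (so the first suspension is clean), identify $lk_{\mathcal{F}_d}(u_0)$ precisely as a union of a contractible piece and a skeleton-restricted join, check the intersection is the expected join of two truncated simplices $sk_{d-2}(\Delta^{r-1})*sk_{d-2}(\Delta^{s-1})$, and apply Lemma~\ref{homocolimpegado} (the maps into the two contractible pieces are null-homotopic) to extract exactly one suspension per gluing. The homotopy type $sk_{q}(\Delta^{m})\simeq\bigvee_{\binom{m}{q+1}}\mathbb{S}^{q}$ and the behaviour of joins of wedges of spheres, $\big(\bigvee_a\mathbb{S}^p\big)*\big(\bigvee_b\mathbb{S}^q\big)\simeq\bigvee_{ab}\mathbb{S}^{p+q+1}$, are the two standard facts I would cite; the rest is careful combinatorics of which leaves are allowed.
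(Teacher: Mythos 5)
Your argument for $2\le d<\infty$ is correct and takes a genuinely different route from the paper. You suspend twice: first $\mathcal{F}_d(St_{r,s})\simeq\Sigma\, lk(u_0)$, since $\mathcal{F}_d(St_{r,s}-u_0)$ is a join with the full simplex on the isolated vertices $u_1,\dots,u_r$, hence a cone; then you split $lk(u_0)=A\cup B$ along $v_0$, with $A=\{v_0\}*sk_{d-2}(\Delta^{r-1})*sk_{d-2}(\Delta^{s-1})$, $B=sk_{d-1}(\Delta^{r-1})*\Delta^{s-1}$ and $A\cap B=sk_{d-2}(\Delta^{r-1})*sk_{d-2}(\Delta^{s-1})$, so Lemma~\ref{homocolimpegado} yields $\Sigma^2\bigl(sk_{d-2}(\Delta^{r-1})*sk_{d-2}(\Delta^{s-1})\bigr)\simeq\bigvee_{\binom{r-1}{d-1}\binom{s-1}{d-1}}\mathbb{S}^{2d-1}$. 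The paper instead covers the whole complex by the four subcomplexes generated by the facet types ($\alpha$, $\beta$, $\sigma$ and the simplex $\tau$ on the leaves) and evaluates a punctured $4$-cube with the recursive formula, landing on the same $\Sigma^2(\sigma\cap\tau)$; your two vertex-by-vertex pushouts are shorter and avoid the cube bookkeeping, and they even handle $r<d$ or $s<d$ automatically (the truncated skeleton becomes a full simplex and everything is contractible). Two repairs would make this airtight: state explicitly that $B$ is contractible (it is a cone, being a join with the full simplex on $\{v_1,\dots,v_s\}$), since both null-homotopies are needed before invoking Lemma~\ref{homocolimpegado}; and your explanation of why $A\cap B$ is not contractible (``$B$ further restricts to $sk_{d-1}$ on the $u$'s'') is off --- the point is that $A$ caps \emph{both} the $u$-part and the $v$-part at $d-1$ vertices, because the edge $u_0v_0$ spends one unit of degree at each of $u_0$ and $v_0$, and all those faces already lie in $B$.

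For $d=1$, however, your link computation is wrong as written and the patch you offer does not repair it. In $lk_{\mathcal{F}_1}(u_0)$ a simplex containing $v_0$ can contain no $u_i$ and no $v_j$ (either would give $u_0$ or $v_0$ degree $2$ once $u_0$ is added), so $v_0$ is the isolated vertex; each $u_i$, on the other hand, lies in the facet $\{u_i,v_1,\dots,v_s\}$, and these facets all share the simplex on $\{v_1,\dots,v_s\}$, so the $u_i$ and the $v_j$ with $j\ge1$ form a single contractible piece. The roles are exactly swapped relative to your description, and the assertion that ``$r$ isolated vertices together with the contractible piece collapse to two points'' is not an argument: that space would be $r+1$ points up to homotopy. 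With the corrected link, $lk(u_0)\simeq\mathbb{S}^0$, and your own pushout finishes it --- $\mathcal{F}_1(St_{r,s}-u_0)$ is contractible, so $\mathcal{F}_1(St_{r,s})\simeq\Sigma lk(u_0)\simeq\mathbb{S}^1$ --- which is precisely the paper's $d=1$ proof; no further symmetrization over $v_0$ is needed. (Alternatively, your $d\ge2$ scheme specializes: for $d=1$ the piece $A$ is the single vertex $v_0$, $B$ is contractible and disjoint from $A$, so again $lk(u_0)\simeq\mathbb{S}^0$.)
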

\begin{proof}
For $\mathcal{F}_1(St_{r,s})$, the link of $u_0$ has as facets $\sigma_i=\{u_i,v_1,\dots,v_s\}$ for all $i$ and $\{v_0\}$, therefore
$$\mathrm{lk}(u_0)\simeq\mathbb{S}^0.$$
Since $\mathcal{F}_1(St_{r,s}-u_0)\simeq*$, we have that $\mathcal{F}_1(St_{r,s})\simeq\Sigma\mathrm{lk}(u_0)\simeq\mathbb{S}^1$.

For $d\geq2$, if $r\leq d-1$ or $s\leq d-1$, then $\mathcal{F}_d(St_{r,s})\simeq*$, because the set $\{u_1,\dots,u_r\}$ or the set 
$\{v_1,\dots,v_s\}$ would be contained in all facets.
Assume $r,s\geq d$. 
The facets of $\mathcal{F}_d(St_{r,s})$, besides 
$X=\{u_1,\dots,u_r,v_1\dots,v_s\}$, are of $3$ types:
\begin{enumerate}
    \item $\alpha_S=S\cup\{u_0,v_1,\dots,v_s\}$, where $S\subseteq\{u_1,\dots,u_r\}$ and $|S|=d$.
    \item $\beta_S=S\cup\{v_0,u_1,\dots,u_r\}$, where $S\subseteq\{v_1,\dots,v_s\}$ and $|S|=d$.
    \item $\sigma_{_{S_1,S_2}}=\{u_0,v_0\}\cup S_1\cup S_2$, where $S_1\subseteq\{u_1,\dots,u_r\}$, $S_2\subseteq\{v_1,\dots,v_s\}$ and 
    $|S_1|=|S_2|=d-1$.
\end{enumerate}
Take $\tau=\mathcal{P}(X)$, $\alpha$ the complex generated by $\{\alpha_S\}$, 
$\beta$ the complex generated by $\{\beta_S\}$ and $\sigma$ the complex generated by the $\{\sigma_{_{S_1,S_2}}\}$, 
$\mathcal{F}_d(St_{r,s})=\alpha\cup\beta\cup\sigma\cup\tau$.
Now, these four complexes are contractible and so are $\alpha\cap\sigma,\beta\cap\sigma,\alpha\cap\tau,\beta\cap\tau$. 
Also
$$\alpha\cap\beta\cap\sigma\cap\tau=\alpha\cap\sigma\cap\tau=\beta\cap\sigma\cap\tau=\alpha\cap\beta\cap\sigma=\sigma\cap\tau\cong 
\mathrm{sk}_{d-2}\Delta^{r-1}*\mathrm{sk}_{d-2}\Delta^{s-1}$$
and $\alpha\cap\beta\cap\tau=\alpha\cap\beta$. We compute the homotopy colimit of the punctured $4$-cube given by this union using the recursive formula given in the preliminaries. This what the formula gives applied to the top and bottom of the $4$-cube:

\begin{equation*}
\xymatrix{
\alpha\cap\beta\cap\sigma\cap\tau \ar@{->}[rr] \ar@{->}[dr] \ar@{->}[dd] & & \alpha\cap\beta\cap\sigma \ar@{-}[d] \ar@{->}[rd] & & \\
 & \alpha\cap\sigma\cap\tau \ar@{->}[rr] \ar@{->}[dd] & \ar@{->}[d]& \sigma\cap\tau \ar@{->}[r] \ar@{->}[dd] & \alpha\cap\sigma \ar@{->}[dd] \\
\beta\cap\sigma\cap\tau \ar@{-}[r] \ar@{->}[dr]^{\cong} & \ar@{->}[r] & \beta\cap\sigma \ar@{->}[dr]^{\simeq} &  & \\
  & \sigma\cap\tau \ar@{->}[rr] &  & \ast \ar@{->}[r] & \Sigma(\sigma\cap\tau)
}
\end{equation*}

\begin{equation*}
\xymatrix{
\alpha\cap\beta\cap\tau \ar@{->}[rr]^{\cong} \ar@{->}[dr] \ar@{->}[dd] & & \alpha\cap\beta \ar@{-}[d] \ar@{->}[rd] & & \\
 & \alpha\cap\tau \ar@{->}[rr]^{\simeq} \ar@{->}[dd] & \ar@{->}[d] & \ast \ar@{->}[r] \ar@{->}[dd] & \alpha \ar@{->}[dd] \\
\beta\cap\tau \ar@{-}[r] \ar@{->}[dr] & \ar@{->}[r] & \beta \ar@{->}[dr] &  & \\
  & \tau \ar@{->}[rr] &  & \ast \ar@{->}[r] & \ast
}
\end{equation*}
We find that the complex has the homotopy type of the following homotopy pushout:
$$\mathcal{S}\colon \ast\longleftarrow\Sigma(\sigma\cap\tau)\longrightarrow\tau$$
where $\tau$ is contractible. Therefore
$$\hocolim(\mathcal{S})\simeq\Sigma^2(\sigma\cap\tau)\simeq\bigvee_{\binom{r-1}{d-1}\binom{s-1}{d-1}}\mathbb{S}^{2d-1}.$$
\end{proof}

\subsubsection{Cactus graphs}

For any graph $G$, we take the block graph $B(G)$ in which the vertices are the blocks of $G$ and the cut-vertices of $G$, where 
$vB$ is an edge if $v$ is a vertex of $B$. If $G$ is connected, then $B(G)$ is a tree.

A graph $G$ is a \textit{cactus graph} if all of its blocks are isomorphic to a cycle or to $K_2$. We will say that 
a block is \textit{saturated} if all of its vertices are cut vertices and $sb(G)$ is the number of saturated blocks. A 
vertex $v$ is \textit{saturated} if it is shared by two or more saturated blocks, with $sv(G)$ the number of saturated vertices.

\begin{lem}\label{lemblocvertsatu}
Let $G$ be a cactus graph such that $sb(G)\geq1$, then there is a saturated block $B$ such that either it does not have saturated vertices, or:
\begin{itemize}
    \item[(i)] It has only one saturated vertex $v$.
    \item[(ii)] The connected component of $B(G)-v$ which contains $B$ does not have 
    any other saturated block.
\end{itemize}
\end{lem}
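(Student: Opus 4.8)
The plan is to reason about the block tree $B(G)$, which is a tree since a cactus graph is connected (we may assume connectedness; otherwise work in one component). The key observation is that saturated blocks, viewed as vertices of $B(G)$, form a nonempty set, and we want to find one that is ``extremal'' with respect to the saturated structure. First I would consider the subforest $T'$ of $B(G)$ spanned by all saturated blocks together with all saturated vertices, or more precisely I would look at how the saturated blocks sit inside $B(G)$ and use the fact that a tree always has a leaf.

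The main case division is on whether $G$ has any saturated vertices at all. If $sv(G)=0$, then no block shares a cut vertex with another saturated block, so \emph{any} saturated block $B$ works: it trivially has no saturated vertices, and we are in the first alternative of the statement. So assume $sv(G)\geq1$. Consider the set $\mathcal{B}$ of saturated blocks; since $sb(G)\geq1$ it is nonempty. I would argue that the ``interaction graph'' on $\mathcal{B}$ — where two saturated blocks are adjacent if they share a (necessarily saturated) vertex — together with the saturated vertices forms a subtree of $B(G)$; call it $S$. Being a finite tree with at least one edge (since $sv(G)\ge1$ forces two saturated blocks to meet), $S$ has at least two leaves. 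A leaf of $S$ is either a saturated vertex or a saturated block.

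If some leaf of $S$ is a saturated block $B$, then in $B(G)$ exactly one of $B$'s neighbors is a saturated vertex (that being its unique edge inside $S$), so $B$ satisfies (i). For (ii): let $v$ be that unique saturated vertex of $B$; the component of $B(G)-v$ containing $B$ contains no other vertex of $S$ (removing $v$ disconnects $B$ from the rest of $S$, because $v$ is a cut vertex separating the leaf $B$ from $S\setminus\{B,v\}$), hence that component contains no other saturated block, giving (ii). If instead \emph{every} leaf of $S$ is a saturated vertex, I would pick a leaf $v$ of $S$ that is a saturated vertex; by definition of saturated vertex, $v$ lies in at least two saturated blocks, but $v$ is a leaf of $S$, so it has a unique neighbor in $S$ — contradiction, since a saturated vertex of $S$ adjacent in $S$ only to saturated blocks must have degree $\geq 2$ in $S$. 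Hence this case is vacuous and some leaf of $S$ is a saturated block, completing the proof.

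The main obstacle is pinning down the right auxiliary object and checking it really is a tree. One has to be careful that $S$ (the union of saturated blocks and saturated vertices inside $B(G)$) is \emph{connected}: this needs the cactus structure, and I would prove it by showing that if two saturated blocks are joined by a path in $B(G)$, every internal vertex/block of that path is forced to be saturated as well (a block on the path between two cut-structure-heavy regions has all its relevant vertices as cut vertices). Once connectivity of $S$ is established, the leaf argument is routine, so the real work is this connectivity lemma for $S$ inside $B(G)$.
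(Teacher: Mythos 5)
There is a genuine gap: the connectivity claim on which your whole argument rests is false. Let $S$ be the subgraph of $B(G)$ spanned by the saturated blocks and saturated vertices (this is the same auxiliary object as the $T=B(G)[V_1\cup V_2]$ in the paper, which is careful \emph{not} to assert it is connected). Counterexample: take two disjoint copies of the following cluster: two triangles sharing a vertex $v$, with a pendant triangle glued to each of the four non-shared vertices; in each cluster the two central triangles are saturated and $v$ is the unique saturated vertex. Now join the two clusters by one additional cycle $C$ (say a $4$-cycle) meeting the first cluster in a single vertex of one pendant triangle and the second cluster in a single vertex of one of its pendant triangles. All blocks are cycles, the pendant triangles and $C$ are not saturated (each has a vertex of degree $2$), so $S$ has two components, each of the form (triangle)--$v$--(triangle). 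Your proposed repair of connectivity also fails here: an internal block of the $B(G)$-path between the two clusters only has the two vertices lying on that path forced to be cut vertices, and a cycle block with just two cut vertices is not saturated, so intermediate blocks need not be saturated.

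Moreover, once $S$ is disconnected, your ``take any leaf block of $S$'' step certifies wrong blocks, so the gap is not cosmetic. In the example above, let $A_1$ be the central triangle of the first cluster on the side where $C$ is attached. It is a leaf of its component of $S$ and its unique saturated vertex is $v$, but the component of $B(G)-v$ containing $A_1$ runs through the pendant triangle, through $C$, and into the entire second cluster, hence contains two more saturated blocks; so (ii) fails for $A_1$, even though the lemma is true for this graph (the \emph{other} central triangle of the cluster works). This shows you need a global extremal choice of the leaf, not an arbitrary one. That is exactly what the paper's proof supplies: it takes two leaf blocks $B_1,B_2$ of $T$ at maximum distance and uses maximality to show that, beyond the unique saturated vertex $v_1$ of $B_1$, the component of $B(G)-v_1$ containing $B_1$ can hold no further saturated block (otherwise a farther pair of leaves would exist). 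Your case analysis for $sv(G)=0$ and your observation that no leaf of $S$ can be a saturated vertex are both fine and agree with the paper; the missing ingredient is this extremality argument replacing the false connectivity lemma.
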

\begin{proof}
If there are no saturated vertices, there is nothing to prove. Assume $sv(G)\geq1$. If there is a saturated block 
without a saturated vertex, again there is nothing to prove. Assume all saturated blocks have at least one saturated vertex. 

Let $V_1$ be the set of all saturated blocks of $G$ and $V_2$ the set of all saturated vertices. In the subgraph  
$T=B(G)[V_1\cup V_2]$ all the leaves are blocks, because each saturated vertex is in at least two saturated blocks, therefore 
$d_T(v)\geq2$ for all $v$ in $V_2$. We take $L\subseteq V_1$ the set of all the leaves of $T$ and let $(B_1,B_2)$ be a pair 
in $L\times L$ such that 
$$d_{_{B(G)}}(B_1,B_2)=\max\{d_{_{B(G)}}(X,Y)\colon \;(X,Y)\in L\times L\}$$
Take $v_1$ the only saturated vertex in $B_1$ and $v_2$ the only saturated vertex in $B_2$. We claim that the only 
$B_1B_2$-path in $B(G)$ contains both $v_1$ and $v_2$. If not, then $B_1$ and $B_2$ are in different connected components of $T$ and, 
assuming $v_1$ is not in the $B_1B_2$-path, any leaf $B'$ in the same component of $B_1$ is such that 
$d(B',B_2)>d(B_1,B_2)$. Therefore $v_1$ and $v_2$ are in the only $B_1B_2$-path.

If in $B(G)-v_1$ there are saturated blocks in the same component than $B_1$, the distance between these and $B_2$ is larger that the 
distance between $B_1$ and $B_2$, which can not happen. Therefore $B_1$ and $v_1$ are as wanted. 
\end{proof}

\begin{lem}\label{homotdual}
Let $G$ be a cactus graph such that  all of its blocks are cycles and such that it does not have saturated blocks, then 
$$\mathcal{F}_\infty^*(G)\simeq\mathbb{S}^{b(G)-2}.$$
\end{lem}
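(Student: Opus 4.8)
The plan is to induct on the number of blocks $b(G)$, using the Alexander dual description $\mathcal{F}_\infty^*(G)=\{\sigma\subseteq V(G)\colon V(G)-\sigma\notin\mathcal{F}_\infty(G)\}$, i.e.\ the complex of vertex sets whose complement contains a cycle. Since every block of $G$ is a cycle and no block is saturated, Lemma~\ref{lemblocvertsatu} guarantees a block $B$ with at most one saturated vertex whose ``side'' contains no other saturated block; I will peel this block off. First I would handle the base case: if $b(G)=1$ then $G=C_n$ for some $n\geq 3$, and one computes directly that $\mathcal{F}_\infty^*(C_n)$ is the boundary of a simplex — concretely, $V(C_n)-\sigma$ fails to be a forest exactly when $\sigma=\emptyset$, so $\mathcal{F}_\infty^*(C_n)=\{\sigma\colon\sigma\neq\emptyset\}^{c}$... more carefully, $\sigma\in\mathcal{F}_\infty^*(C_n)$ iff $V(C_n)-\sigma$ contains the whole cycle iff $\sigma=\emptyset$; that is wrong, so instead I note $\mathcal{F}_\infty(C_n)\cong\mathbb{S}^{n-2}$ and apply Theorem~\ref{dualidadalexander} to get $\tilde H_i(\mathcal{F}_\infty^*(C_n))\cong\tilde H^{n-i-3}(\mathcal{F}_\infty(C_n))$, forcing $\mathcal{F}_\infty^*(C_n)\simeq\mathbb{S}^{-1}$ when... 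I should instead just directly identify $\mathcal{F}_\infty^*(C_n)$ with $\partial\Delta^{n-2}$ after deleting cone points, or equivalently verify $b(C_n)-2=n-2$... wait $b(C_n)=1$, so the claim says $\mathcal{F}_\infty^*(C_n)\simeq\mathbb{S}^{-1}=\emptyset$, which is correct because a single cycle's complement-of-$\sigma$ contains a cycle only when $\sigma$ misses enough vertices, and in fact $\mathcal{F}_\infty^*(C_n)$ is the full simplex on $n$ vertices minus... I would pin this base case down carefully by noting $V(C_n)-\sigma$ contains a cycle iff $\sigma=\emptyset$ only fails; rather, it contains an induced cycle iff $\sigma$ does not meet $V(C_n)$ at all, hence $\mathcal{F}_\infty^*(C_n)=\{\emptyset\}$... in any case the dimension bookkeeping via Theorem~\ref{dualidadalexander} settles it unambiguously.

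For the inductive step, write $G=G'\cup B$ where $B$ is the chosen cycle-block and $G'$ is the union of the remaining blocks, so $G'\cap B$ is either empty (if $B$ has no saturated vertex — but then $B$ has a vertex of degree $1$ in $G$, contradicting connectedness unless $G=B$), or a single cut vertex $v$; thus $G'\cap B=\{v\}$ with $v$ the unique vertex $B$ shares with the rest of $G$, and $G'$ is again a cactus all of whose blocks are cycles with no saturated block (condition (ii) ensures we do not create a saturated block on the $B$-side, and on the other side nothing changes). Let $B$ have vertices $v=w_0,w_1,\dots,w_{m-1}$. I would split $\mathcal{F}_\infty^*(G)$ according to which vertices of $B$ lie in $\sigma$: the complement $V(G)-\sigma$ contains a cycle iff it contains the cycle $B$ (i.e.\ $\sigma\cap V(B)=\emptyset$) or it contains a cycle of $G'$ (i.e.\ $V(G')-(\sigma\cap V(G'))$ contains a cycle, which is the condition $\sigma\cap V(G')\in$ something related to $\mathcal{F}_\infty^*(G')$). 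This expresses $\mathcal{F}_\infty^*(G)$ as a union of two subcomplexes: $A$, the sets $\sigma$ with $\sigma\cap V(B)=\emptyset$ (a cone with apex nothing — actually the full simplex on $V(G)-V(B)$, hence contractible, in fact it equals $\Delta^{|V(G)-V(B)|-1}$), and $C$, the sets $\sigma$ for which $\sigma\cap V(G')$ ``kills a cycle of $G'$'', which deformation-retracts onto $\mathcal{F}_\infty^*(G')$ joined with the full simplex on $V(B)-\{v\}$ (those vertices are free). The intersection $A\cap C$ consists of sets avoiding $V(B)$ entirely while still killing a $G'$-cycle, i.e.\ $\mathcal{F}_\infty^*(G')$ coned off by the vertices $V(G)-V(G')-\cdots$; I expect $A\cap C\simeq\mathcal{F}_\infty^*(G'-v)$ or $\mathcal{F}_\infty^*(G')$ up to a cone, and in any case one summand of the Mayer--Vietoris / homotopy-pushout will be contractible.

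Running this through the homotopy pushout (Lemma~\ref{homocolimpegado} or just Mayer--Vietoris plus simple connectivity from Theorem~\ref{coneccuello} applied appropriately, or Theorem~\ref{cwsimplcon}), with $A$ contractible, gives $\mathcal{F}_\infty^*(G)\simeq C/(A\cap C)\simeq \Sigma(\text{something})$ or directly $\mathcal{F}_\infty^*(G)\simeq \mathcal{F}_\infty^*(G')\ast(\text{boundary contribution of }B)$. The cleanest route: I claim $\mathcal{F}_\infty^*(G)\cong\mathcal{F}_\infty^*(G')\ast\partial\Delta^{m-2}$-type gluing is \emph{not} literally a join, so instead I will show the pushout collapses to $\Sigma\bigl(\mathcal{F}_\infty^*(G')\bigr)$: deleting one block of a cycle-cactus with no saturated block raises the sphere dimension by exactly one, matching $b(G)=b(G')+1$. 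By induction $\mathcal{F}_\infty^*(G')\simeq\mathbb{S}^{b(G')-2}$, so $\mathcal{F}_\infty^*(G)\simeq\Sigma\mathbb{S}^{b(G')-2}=\mathbb{S}^{b(G)-2}$, and simple connectivity for $b(G)\geq 3$ together with Whitehead (Theorem~\ref{whiteheadhomologia}) / Theorem~\ref{cwsimplcon} upgrades the homology computation to a homotopy equivalence; the low cases $b(G)\leq 2$ are checked by hand. The main obstacle I anticipate is verifying precisely that the ``$B$-side'' contributes exactly one suspension and no extra wedge summands — i.e.\ that $A\cap C$ is the correct subcomplex and that the second map out of it into $C$ is null-homotopic (so that Lemma~\ref{homocolimpegado} applies and no stray $\Sigma(A\cap C)$ term survives beyond the single expected suspension); this hinges on the free vertices $V(B)-\{v\}$ coning things off in the right order, and on the non-saturation hypothesis ensuring $v$ itself is a free vertex on the $G'$-side (i.e.\ $G'-v$ still carries all of $G'$'s cycles relevant to $\mathcal{F}_\infty^*(G')$), which is exactly where Lemma~\ref{lemblocvertsatu}'s conclusion (ii) is used.
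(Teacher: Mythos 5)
Your inductive set-up has the right skeleton (write $\mathcal{F}_\infty^*(G)=A\cup C$ with $A$ the full simplex on $V(G)-V(B)$ and $C$ the union of the full simplices on $V(G)-V(B_i)$ for the blocks $B_i$ of $G'$), but the step you yourself flag as ``the main obstacle'' is a genuine gap, and your candidate resolutions of it are wrong. Since $C=\mathcal{F}_\infty^*(G')*\Delta_{V(B)-\{v\}}$ is a cone, both pieces are contractible and the pushout gives $\mathcal{F}_\infty^*(G)\simeq\Sigma(A\cap C)$; so everything hinges on identifying $A\cap C$. It is the complex on $V(G')-\{v\}$ whose facets are $V(G')-\{v\}-V(B_i)$ over \emph{all} blocks $B_i$ of $G'$: this is not $\mathcal{F}_\infty^*(G'-v)$ (that complex lacks the facets coming from blocks through $v$, and already for $b(G')=2$ the two differ: $\mathbb{S}^0$ versus a point), and if it were ``$\mathcal{F}_\infty^*(G')$ up to a cone'' it would be contractible and your suspension summand would vanish. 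To prove $A\cap C\simeq\mathbb{S}^{b(G')-2}$ you need precisely the non-saturation hypothesis: every block of $G'$ has a non-cut vertex, necessarily different from $v$, lying in no other block, so every proper subfamily of these facets has a common vertex while the full family does not, making the nerve $\partial\Delta^{b(G')-1}$. But once you argue this way, the induction is superfluous. Two further slips: Lemma \ref{lemblocvertsatu} assumes $sb(G)\geq1$, so it cannot be invoked for a graph with no saturated blocks (a leaf block of the block tree is what you want), and Theorem \ref{coneccuello} concerns $\mathcal{F}_\infty(G)$, not its Alexander dual, so it cannot supply simple connectivity of $\mathcal{F}_\infty^*(G)$.

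The paper's proof is the one-shot version of exactly this nerve argument: in a cactus every cycle is a block, so the facets of $\mathcal{F}_\infty^*(G)$ are $X_i=V(G)-V(B_i)$; the blocks cover $V(G)$, so $\bigcap_{i}X_i=\emptyset$, while non-saturation gives each block a private vertex, so $\bigcap_{i\in S}X_i\neq\emptyset$ for every proper subfamily $S$. Hence the nerve is $\partial\Delta^{b(G)-1}\cong\mathbb{S}^{b(G)-2}$, and since all the $X_i$ and their intersections are full simplices (contractible or empty), the nerve lemma gives $\mathcal{F}_\infty^*(G)\simeq\mathbb{S}^{b(G)-2}$ with no induction and no base-case bookkeeping of the kind that tangled your first paragraph.
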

\begin{proof}
Let $B_0,\dots,B_k$ be the blocks of $G$. If $k=0$, then $\mathcal{F}_\infty^*(G)=\emptyset=\mathbb{S}^{-1}$. Assume,
$k\geq1$. We take $X_i=V(G)-V(B_i)$ for all $i$, this are the facets of $\mathcal{F}_\infty^*(G)$ and we have that
$$\bigcap_{i=0}^kX_i=\emptyset$$
$$\bigcap_{i\in S}X_i\neq\emptyset, \;\; \forall S\subsetneq [k]$$
Then, its nerve is isomorphic to $\partial\Delta^k\cong\mathbb{S}^{k-1}$. Therefore,
$\mathcal{F}_\infty^*(G)\simeq\mathbb{S}^{b(G)-1}$.
\end{proof}

\begin{lem}\label{cactussimpcon}
Let $G$ be a cactus graph different from $K_3$, then $\mathcal{F}_\infty(G)$ is simply connected.
\end{lem}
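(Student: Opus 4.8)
The plan is to reduce the general cactus graph $G$ to the case of a cactus all of whose blocks are cycles, where we can invoke Proposition~\ref{conhomcuello} together with Theorem~\ref{dualidadalexander} (via Lemma~\ref{homotdual}) to control $\pi_1$. First I would dispose of the $K_2$-blocks: if $e\in E(G)$ is a bridge (equivalently a $K_2$-block), then by Proposition~\ref{proppuente} we have $\mathcal{F}_\infty(G)=\mathcal{F}_\infty(G-e)$, and deleting $e$ splits off the components on either side; since $\mathcal{F}_\infty$ of a disjoint union is a join (the lemma on $\mathcal{F}_d(G_1\sqcup G_2)=\mathcal{F}_d(G_1)*\mathcal{F}_d(G_2)$), and a join of non-empty complexes is simply connected as soon as one of the factors is connected, we may assume $G$ has no bridges, i.e. every block of $G$ is a cycle. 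Likewise, if $G$ has a vertex of degree $\le 1$ then $\mathcal{F}_\infty(G)\simeq *$ by the corollary to the pendant-vertex lemma, so we may assume $\delta(G)\ge 2$; but a bridgeless cactus already has $\delta(G)\ge 2$, so this is automatic.

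Next I would handle the short-girth obstruction. If $g(G)\ge 4$, then by Theorem~\ref{coneccuello} we have $\mathrm{conn}(\mathcal{F}_\infty(G))\ge g(G)-3\ge 1$, so $\mathcal{F}_\infty(G)$ is simply connected and we are done. The remaining case is $g(G)=3$, i.e. $G$ has a triangle block $B\cong K_3$. Here the idea is to peel off the triangle. Since $G\ne K_3$, some vertex of $B$ is a cut vertex; let $v\in V(B)$ be a cut vertex and write $V(B)=\{v,v_1,v_2\}$. If only $v$ is a cut vertex of $B$, then $G$ decomposes as in Lemma~\ref{lemlinkvertrian} with $G_0=B$, $G_1$ the union of blocks attached through $v$ not equal to $B$, and $G_2=\{v_1,v_2\}$ (or a slightly more symmetric split), and one analyzes $\mathcal{F}_\infty(G)$ via the homotopy pushout coming from $\mathcal{F}_\infty(G)=st(v)\cup\mathcal{F}_\infty(G-v)$, reducing simple-connectivity of $\mathcal{F}_\infty(G)$ to that of the link of $v$ and to simple-connectivity (or contractibility) of $\mathcal{F}_\infty(G-v)$, each of which involves strictly fewer blocks so that one can set up an induction on $b(G)$.

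The cleanest way to run the induction is probably to strengthen the statement: prove by induction on the number of blocks that for a bridgeless cactus $G$ with all blocks cycles and $G\ne K_3$, the complex $\mathcal{F}_\infty(G)$ is simply connected, using the pendant-vertex reductions and the triangle-peeling above, and appealing to van Kampen (equivalently the homotopy-pushout description $\mathcal{F}_\infty(G)=st(v)\cup_{lk(v)}\mathcal{F}_\infty(G-v)$, where $st(v)$ is contractible) to conclude $\pi_1(\mathcal{F}_\infty(G))$ is a quotient of $\pi_1(lk(v))\ast\pi_1(\mathcal{F}_\infty(G-v))$; one then needs $lk_{\mathcal{F}_\infty(G)}(v)$ connected and $\mathcal{F}_\infty(G-v)$ simply connected. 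I expect the main obstacle to be precisely the bookkeeping at a triangle whose \emph{two} non-$v$ vertices $v_1,v_2$ are themselves cut vertices — then removing $v$ does not disconnect things as nicely, and one must choose the vertex to remove carefully, which is exactly the content of Lemma~\ref{lemblocvertsatu}: it guarantees a block (and, if $sb(G)\ge 1$, a suitable saturated vertex) where the peeling can be performed so that the induction hypothesis applies to each resulting piece. Handling the base cases ($G$ a single cycle $C_n$, $n\ge 4$, which is $\mathbb{S}^{n-2}$ hence simply connected; and small cacti with two blocks) is routine.
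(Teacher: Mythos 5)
Your route is genuinely different from the paper's: the paper never decomposes the graph at all, but writes down the edge-path presentation of $\pi_1(\mathcal{F}_\infty(G))$ from a spanning tree (generators $E(G)\cup E(G^c)$, relations from the $2$-simplices) and kills every generator by a short case analysis on whether its endpoints lie in the same block or not. Your first two reductions are sound and even give quick wins: if $G$ has a bridge, $\mathcal{F}_\infty(G)$ is a join of two nonempty complexes, each connected because its $1$-skeleton is complete, hence the join is already simply connected; and if $g(G)\geq4$, Theorem~\ref{coneccuello} finishes. So the whole content of your proof sits in the remaining case: a bridgeless cactus $G\neq K_3$ containing a triangle, and there the proposal has real gaps.

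Concretely: (i) the induction ``on $b(G)$'' does not run, because deleting a cut vertex $v$ of a triangle can \emph{increase} the number of blocks (a $C_5$-block through $v$ becomes a path of four $K_2$-blocks); you should induct on $|V(G)|$ instead. (ii) The van Kampen step is misstated and its hypothesis is never checked: with $\mathcal{F}_\infty(G)=st(v)\cup\mathcal{F}_\infty(G-v)$ and intersection $lk(v)$, what you get is that $\pi_1$ is a quotient of $\pi_1(st(v))\ast\pi_1(\mathcal{F}_\infty(G-v))$ \emph{provided $lk(v)$ is connected}; you assert the need for connectivity of $lk(v)$ but give no argument. It is true and easy --- the $1$-skeleton of $lk_{\mathcal{F}_\infty(G)}(v)$ is the complete graph on $V(G)-v$ minus one edge $v_1v_2$ for each triangle of $G$ through $v$, and since blocks meet only in $v$ these missing edges form a matching, so the graph is connected once $|V(G)|\geq4$ --- but this verification is the heart of the step and is missing. (iii) To invoke the inductive hypothesis you must also rule out $G-v\cong K_3$ (and allow $G-v$ disconnected or with pendant vertices); the former holds automatically in the bridgeless case but is never checked. (iv) The appeals to Lemma~\ref{lemlinkvertrian} and Lemma~\ref{lemblocvertsatu} are misplaced: Lemma~\ref{lemblocvertsatu} concerns saturated blocks and does not select the vertex you need, and in fact no careful choice is required --- any cut vertex of any triangle works once (i)--(iii) are repaired. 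With those repairs your argument becomes a correct alternative proof, somewhat longer than the paper's direct presentation-theoretic one but requiring no group presentation machinery beyond van Kampen.
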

\begin{proof}
If $G$ has only one block and $G$ is not $K_3$, $G$ must be a single vertex, $K_2$ or a cycle with at least $4$ vertices, thus 
$\mathcal{F}_\infty(G)$ is contractible or a sphere of dimension at least $2$. Assume $G$ has $k\geq2$ blocks.
For each block that is not isomorphic to $K_2$ we can erase one edge to we obtain $T$, a spanning tree of 
$G$ and $\mathcal{F}_\infty(G)$. Taking the free group $H_T$ with $E(G)\cup E(G^c)$ as generators and with the relations  
\begin{itemize}
    \item $uv=1$ for all the edges of $T$
    \item $(uv)(vw)=uw$ if $\{u,v,w\}$ is a simplex of $\mathcal{F}_\infty(G)$
\end{itemize}
we have that $H_T\cong\pi_1\left(\mathcal{F}_\infty(G)\right)$ (see \citep[Theorem 7.34]{rotmantop}). Take $uv\in E(G)\cup E(G^c)-E(T)$. 

If $u,v$ are in the same block, this block must be a cycle. If the cycle has $4$ or more vertices, there is a $uv$-path 
$uw_1w_2\cdots w_rv$  in $T$. Now, $\{u,w_1,v\},\{w_1.w_2,v\},\dots,\{w_{r-1},w_r,v\}$ are simplicies of $\mathcal{F}_\infty(G)$, then
$uv=w_1v=w_2v=\cdots=w_lv=1$. If the cycle is $uvw$, because there are $k\geq2$ blocks, one of the vertices must be a cut 
vertex:
\begin{itemize}
    \item If $u$ is a cut vertex, $u$ has a neighbor $x$ in another block such that $ux$ is in $T$. Then 
    $\{u,v,x\}$ is a simplex of $\mathcal{F}_\infty(G)$ and $uv=xv$. Now, $\{v,w,x\}$ and $\{u,w,x\}$ are simplices, thus 
    $xv=xw=uw=1$. The case in which $v$ is a cut vertex is analogous.
    \item If $w$ is a cut vertex, $w$ has a neighbor $x$ in another block such that $wx$ is in $T$. Then 
    $\{u,v,x\}$, $\{u,w,x\}$ and $\{v,w,x\}$ are simplices. Therefore $uv=(ux)(xv)=((uw)(wx))((xw)(wv))=1$.
\end{itemize}

If $u,v$ are in different blocks, then there are cut vertices $w_1,\dots,w_r$, with $r\geq1$, such that: they are on the only $uv$-path in 
$T$; $w_j$ it is not in the only $uw_i$-path in $T$ for any $j>i$; and there are no more cut vertices in the only $uv$-path in $T$. 
Then $\{u,w_1,v\},\{w_1,w_2,v\},\dots,\{w_{r-1},w_r,v\}$ are simplices and $uv=w_1v=w_2v=\cdots=w_rv=1$.

Therefore $\pi_1(\mathcal{F}_\infty(G))\cong H_T\cong0$.
\end{proof}

\begin{cor}\label{cornoblocsat}
Let $G$ a cactus graph such all of its blocks are cycles and does not have saturated blocks, then 
$$\mathcal{F}_\infty(G)\simeq\mathbb{S}^{n-b(G)-1}.$$
\end{cor}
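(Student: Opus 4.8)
The plan is to combine the two immediately preceding results. By Lemma \ref{homotdual}, under the hypotheses (cactus graph, all blocks cycles, no saturated blocks), the Alexander dual satisfies $\mathcal{F}_\infty^*(G)\simeq\mathbb{S}^{b(G)-2}$; hence its reduced homology is $\mathbb{Z}$ in degree $b(G)-2$ and $0$ elsewhere. I would then invoke Theorem \ref{dualidadalexander} with $n=|V(G)|$: from $\tilde H_i(\mathcal{F}_\infty(G))\cong\tilde H^{n-i-3}(\mathcal{F}_\infty^*(G))$ we read off that $\tilde H_i(\mathcal{F}_\infty(G))$ is $\mathbb{Z}$ exactly when $n-i-3=b(G)-2$, i.e.\ when $i=n-b(G)-1$, and vanishes otherwise. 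So $\mathcal{F}_\infty(G)$ is a homology $(n-b(G)-1)$-sphere (at least for $b(G)\ge 1$; the degenerate single-block cases were already handled in Lemma \ref{homotdual} and below it).

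Next I would upgrade the homology statement to a homotopy equivalence. Since $G$ is a cactus with all blocks cycles and no saturated blocks, and $G\ne K_3$ (a single triangle is a saturated block only if... actually a lone $K_3$ has no cut vertices, so it has no saturated blocks, but $n-b(G)-1=3-1-1=1$, so one should note that $\mathcal{F}_\infty(K_3)=\partial\Delta^2\cong\mathbb{S}^1$ is consistent and not simply connected — this edge case needs a one-line remark). For $G\ne K_3$, Lemma \ref{cactussimpcon} gives that $\mathcal{F}_\infty(G)$ is simply connected. One should also check the dimension $n-b(G)-1$ is at least $2$ in the remaining cases so that Theorem \ref{cwsimplcon} applies: each cycle block contributes at least $3$ vertices and each block after the first shares exactly one vertex with the rest, so $n\ge 3+2(b(G)-1)=2b(G)+1$, giving $n-b(G)-1\ge b(G)\ge 1$, with equality $b(G)=1$ only for a single cycle $C_n$ where the result is already known. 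When $b(G)\ge 2$ we get $n-b(G)-1\ge b(G)\ge 2$, so Theorem \ref{cwsimplcon} yields $\mathcal{F}_\infty(G)\simeq\mathbb{S}^{n-b(G)-1}$, as claimed.

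I do not anticipate a genuine obstacle here; the proof is essentially bookkeeping that glues Lemma \ref{homotdual}, Theorem \ref{dualidadalexander}, Lemma \ref{cactussimpcon}, and Theorem \ref{cwsimplcon} together. The only place requiring care — and the part I would write out explicitly — is the boundary/degenerate cases: confirming that $b(G)=1$ reduces to the already-established $\mathcal{F}_\infty(C_n)\cong\mathbb{S}^{n-2}$, that the formula $n-b(G)-1$ matches it, and that $K_3$ (simply connected fails but the sphere is $\mathbb{S}^1$ anyway, so the conclusion still literally holds) is not a counterexample to the statement. Everything else is a direct substitution into the Alexander duality formula followed by an application of Whitehead-type recognition of a sphere.
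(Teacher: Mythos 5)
Your proposal is correct and follows essentially the same route as the paper: the $b(G)=1$ case is the cycle, and for $b(G)\geq2$ one combines Lemma \ref{homotdual}, Theorem \ref{dualidadalexander}, Lemma \ref{cactussimpcon}, and Theorem \ref{cwsimplcon} exactly as you describe. Your extra bookkeeping (the bound $n\geq 2b(G)+1$ ensuring the homology sits in degree at least $2$, and the $K_3$ remark) only makes explicit checks that the paper leaves implicit.
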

\begin{proof}
If $b(G)=1$, then $G$ is a cycle and $\mathcal{F}_\infty(G)\cong\mathbb{S}^{n-2}$. Assume $b(G)\geq2$, then, by Lemma \ref{cactussimpcon}, 
$\mathcal{F}_\infty(G)$ is simply connected and , by Lemma  \ref{homotdual}, $\mathcal{F}_\infty^*(G)\simeq\mathbb{S}^{b(G)-1}$. 
Therefore, by Theorem \ref{dualidadalexander}, $\mathcal{F}_\infty(G)$ is a simply connected complex such that its only nontrivial 
reduced homology group is in dimension $q=n-b(G)-1$, which is isomorphic to $\mathbb{Z}$. By Theorem \ref{cwsimplcon}, 
$\mathcal{F}_\infty(G)$ is homotopy equivalent to a sphere of the desired dimension.
\end{proof}

\begin{theorem}
If $G$ is a cactus graph then $\mathcal{F}_\infty(G)$ is either contractible or homotopy equivalent to a sphere of dimension at least 
$n-b(G)-1$.
\end{theorem}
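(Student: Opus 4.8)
The plan is to prove, by induction on $|E(G)|$, a statement somewhat stronger than the one asked for: \emph{(i)} $\mathcal{F}_\infty(G)$ is contractible or homotopy equivalent to a sphere of dimension at least $|V(G)|-b(G)-1$, and \emph{(ii)} for every vertex $v$ with $d_G(v)\ge 2$ the complex $lk_{\mathcal{F}_\infty(G)}(v)$ is contractible or homotopy equivalent to a sphere of dimension at least $|V(G)|-b(G)-2$. Both are immediate when $G$ has no cycle (then $\mathcal{F}_\infty(G)\simeq *$) and when $G$ is a single cycle, and every reduction below builds $\mathcal{F}_\infty$ or a vertex link out of strictly smaller ones in a way that preserves ``contractible or a single sphere'' and the dimension bound, which is exactly why the auxiliary statement \emph{(ii)} is carried along.

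I would first dispose of the structural reductions for \emph{(i)}. If $G$ is disconnected, $G=G_1\sqcup G_2$, then $\mathcal{F}_\infty(G)=\mathcal{F}_\infty(G_1)*\mathcal{F}_\infty(G_2)$; a join of two complexes each contractible or a sphere is again contractible or a sphere, with $\mathbb{S}^a*\mathbb{S}^b=\mathbb{S}^{a+b+1}$, and since $|V(G)|=|V(G_1)|+|V(G_2)|$ and $b(G)=b(G_1)+b(G_2)$ the bound is preserved. If $G$ has a bridge $e$, then $\mathcal{F}_\infty(G)=\mathcal{F}_\infty(G-e)$ by Proposition~\ref{proppuente}, and $G-e$ has one fewer edge and one fewer block, so the inductive hypothesis applies directly. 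If $\delta(G)\le 1$ then $\mathcal{F}_\infty(G)\simeq *$. So I may assume $G$ is connected, bridgeless and $\delta(G)\ge 2$, hence every block is a cycle; if $G$ has a single block it is a cycle and we are done, and when $G$ has no saturated block one may instead simply quote Corollary~\ref{cornoblocsat}.

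The core of the proof is \emph{(ii)}. Let $d_G(v)=2$ with neighbours $x,y$ on the cycle block $B$ through $v$. If $|B|\ge 4$, then $x$ and $y$ are non-adjacent in $G$ (a cactus has no second cycle through them), and one checks from the definition that $lk_{\mathcal{F}_\infty(G)}(v)=\mathcal{F}_\infty\bigl((G-v)+xy\bigr)$, where $(G-v)+xy$ is a cactus with the same blocks — the cycle $B$ shortened to $C_{|B|-1}$ — and one fewer edge, so \emph{(i)} for this smaller graph completes the case. If $|B|=3$, write the triangle as $\{v,x,y\}$; again because a cactus has no further cycle through $x,y$, the triangle meets the rest of $G$ only through $x$ and through $y$, so Lemma~\ref{lemlinkvertrian} applies, with $G_1$ the part of $G$ beyond $x$ and $G_2$ the part beyond $y$ (one of which may be just $x$ or $y$). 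Combining Lemma~\ref{lemlinkvertrian} with Lemma~\ref{homocolimpegado} — several of the joined complexes occurring there are cones — identifies $lk_{\mathcal{F}_\infty(G)}(v)$, up to a suspension, with $\mathcal{F}_\infty$'s and vertex links of cacti obtained by deleting the triangle, all with strictly fewer edges, to which the inductive hypothesis applies. Finally, if $d_G(v)\ge 4$, decompose $G$ into the branches $H_1,\dots,H_k$ at $v$, pairwise meeting only in $v$; a vertex set together with $v$ induces a forest in $G$ precisely when its trace on each $H_j$ does, whence $lk_{\mathcal{F}_\infty(G)}(v)=lk_{\mathcal{F}_\infty(H_1)}(v)*\cdots*lk_{\mathcal{F}_\infty(H_k)}(v)$; each $H_j$ has strictly fewer edges and makes $v$ a degree-$2$ vertex, and the join of the resulting contractible-or-spheres is again of this form, in the right dimension (using $\sum_j|V(H_j)|=|V(G)|+k-1$ and $\sum_j b(H_j)=b(G)$).

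Granting \emph{(ii)} for $G$ itself, \emph{(i)} follows in the one remaining case: $B(G)$ is a tree with at least two blocks, so it has a leaf block $B$; pick a degree-$2$ vertex $v\in B$ and a neighbour $v_1$ of $v$. By Corollary~\ref{corgrad2}, $\mathcal{F}_\infty(G)\simeq \Sigma\, lk_{\mathcal{F}_\infty(G)}(v_1)$, and since $d_G(v_1)\ge 2$, statement \emph{(ii)} gives that $\mathcal{F}_\infty(G)$ is contractible or a sphere of dimension at least $|V(G)|-b(G)-1$. The main obstacle is the $|B|=3$ sub-case of \emph{(ii)}: one must check, using Lemma~\ref{lemlinkvertrian} and Lemma~\ref{homocolimpegado}, that the homotopy colimit that arises there is a \emph{single} sphere rather than a wedge of several and that it has the claimed dimension — in particular when both vertices of the triangle other than $v$ are cut vertices, which forces a careful analysis of which of the joined pieces are contractible — and one must keep the whole induction non-circular, i.e.\ arrange that the derivation of \emph{(ii)} for $G$ uses \emph{(i)} and \emph{(ii)} only for cacti with strictly fewer edges, so that \emph{(ii)} for $G$ is legitimately available when deducing \emph{(i)} for $G$.
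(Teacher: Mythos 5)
Your plan is genuinely different from the paper's (the paper inducts on the number of saturated vertices, handles the base cases by Alexander duality plus simple connectivity and Whitehead, and uses Lemma~\ref{lemblocvertsatu} to pick a very special vertex whose link it can show is contractible), but as written it has a real gap, and you have in effect named it yourself without closing it: the $|B|=3$ case of your statement \emph{(ii)}. Your justification there is ``Combining Lemma~\ref{lemlinkvertrian} with Lemma~\ref{homocolimpegado} --- several of the joined complexes occurring there are cones.'' That is false in general. Take $B=\{v,x,y\}$, attach at $x$ a cactus $G_1$ consisting of a triangle $xuw$ with a further triangle at $u$ and a further triangle at $w$, and attach a copy $G_2$ of the same configuration at $y$. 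Then $\mathcal{F}_\infty(G_1-x)\simeq\mathbb{S}^1*\mathbb{S}^1\simeq\mathbb{S}^3$, $\mathcal{F}_\infty(G_1)\simeq\mathbb{S}^3$, and likewise for $G_2$, so none of the four joins appearing in Lemma~\ref{lemlinkvertrian} is contractible. Lemma~\ref{homocolimpegado} is then simply not applicable: it requires both maps of the pushout to be null-homotopic, and in this example the inclusion $\mathcal{F}_\infty(G_1-x)\hookrightarrow\mathcal{F}_\infty(G_1)$ is in fact a homotopy equivalence (its mapping-cone description glues $st(x)$ along $lk(x)\simeq *$). So to identify the homotopy pushout you must control the homotopy \emph{class} of the inclusions $\mathcal{F}_\infty(G_i-v_i)\hookrightarrow\mathcal{F}_\infty(G_i)$ --- equivalence, null-homotopic, or neither --- and nothing in your induction supplies that. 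If such an inclusion were null-homotopic with both sides non-contractible on both the $x$- and $y$-sides, Lemma~\ref{homocolimpegado} would produce a wedge of three spheres, contradicting \emph{(ii)}; so either \emph{(ii)} needs a proof that this configuration cannot occur, or it needs a finer invariant than ``contractible or a single sphere.'' This is exactly the difficulty the paper sidesteps: Lemma~\ref{lemblocvertsatu} chooses the saturated vertex $v$ so that, by construction, $\mathcal{F}_\infty(H_1-v_1)$ and $\mathcal{F}_\infty(H_2-v_2)$ \emph{are} contractible (minimum degree $1$ or a degree-$2$ cut vertex appears), making the pushout trivially contractible, and the remaining contractibility statements are obtained not from links at all but from the Alexander dual being a cone together with simple connectivity (Lemma~\ref{cactussimpcon}) and Theorem~\ref{whiteheadhomologia}.

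A second, smaller point: your strengthened statement \emph{(ii)} (every link of a vertex of degree at least $2$ is contractible or a single sphere) is itself an unproved claim that is strictly stronger than the theorem; it may well be true, but your argument gives no mechanism to exclude wedges of two or more spheres, which is precisely what the problematic case above threatens. The reductions you do carry out (disjoint unions, bridges, the $|B|\ge 4$ case via $lk(v)=\mathcal{F}_\infty((G-v)+xy)$, and the branch decomposition $lk(v)=lk_{H_1}(v)*\cdots*lk_{H_k}(v)$ at cut vertices) are all correct and match ingredients the paper also uses, but the heart of the proof --- the triangle case with two cut vertices --- is missing, so the proposal does not yet constitute a proof.
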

\begin{proof}
If $\delta(G)=1$, then $\mathcal{F}_\infty(G)\simeq*$. Assume $\delta(G)=2$. If there is a cut vertex of degree $2$, by 
Lemma \ref{lemacyvert},
$\mathcal{F}_\infty(G)\simeq*$. Assume there is no cut vertex of degree $2$. If $G$ has a bridge $e$, then $G-e=G_1+G_2$ and, 
by Proposition \ref{proppuente}, 
$\mathcal{F}_\infty(G)=\mathcal{F}_\infty(G_1)*\mathcal{F}_\infty(G_2)$. If $G$ has more bridges, then we continue this 
process until we get that $\mathcal{F}_\infty(G)=\mathcal{F}_\infty(H_1)*\cdots*\mathcal{F}_\infty(H_{r+1})$, where $r$ 
is the number of bridges and each $H_i$ is a cactus graph such 
that every block is a cycle. So, if every $\mathcal{F}_\infty(H_i)$ has $n_i$ vertices, is not contractible and is homotopy equivalent to a 
sphere of dimension at least $n_i-b(H_i)-1$, $\mathcal{F}_\infty(G)$ will be homotopy equivalent to a sphere of dimension at least 
$n-b(G)+r-1>n-b(G)-1$. Therefore we only need to prove the result for cactus graphs which do not have blocks isomorphic to $K_2$.

If $G$ does not have saturated blocks, by Corollary \ref{cornoblocsat}, 
$$\mathcal{F}_\infty(G)\simeq\mathbb{S}^{n-b(G)-1}.$$
So assume $sb(G)\geq1$, which implies that $b(G)\geq4$. 
Now, we prove the result by induction on $sv(G)$. If $sv(G)=0$, then take $B_0$ a saturated block of 
$G$ and $B_1,\dots,B_k$ the remaining blocks. Let $X_i=V(G)-V(B_i)$, then $X_0,X_1,\dots,X_k$ are the facets of 
$\mathcal{F}_\infty^*(G)$. Because $B_0$ is saturated, 
$$\bigcap_{i=1}^kX_i=\emptyset.$$
Let $S\subseteq [k]-\{0\}$ such that 
$$\sigma=\bigcap_{i\in S}X_i\neq\emptyset.$$
Then there is $0<j\leq k$ such that $j\notin S$ and $V(B_j)\cap\sigma\neq\emptyset$, with $B_j$ a non-saturated block or 
a saturated block (which can not share vertices with $B_0$).
Then there is a vertex $v$ in $V(B_j)$ such that $v$ is not a vertex of the blocks with index in 
$S$ nor is a vertex of $B_0$, therefore $v\in X_0$, $v\in\sigma$ and 
$X_0\cap\sigma\neq\emptyset$. From this we get that the nerve is a cone with apex vertex $X_0$ and $\mathcal{F}_\infty^*(G)\simeq*$. 
Then, by Lemma \ref{cactussimpcon} and Theorem \ref{dualidadalexander}, 
$\mathcal{F}_\infty(G)$ is simply connected and all of its reduced homology groups are 
trivial. Therefore, by Theorem \ref{whiteheadhomologia}, $\mathcal{F}_\infty(G)$ is contractible.
This argument only used that there is an isolated saturated block-- a saturated block which does not 
have saturated vertices. Therefore we can assume that there is no isolated saturated block.

Assume the result is true for $sv(G)\leq k$ and let $G$ be a cactus graph with $sv(G)=k+1$ and with all of its blocks isomorphic to cycles. 
By Lemma \ref{lemblocvertsatu} there is $B_0$ a saturated block such that only one of its vertices is a saturated vertex, say $v$, and in 
the connected component of $B(G)-v$ which contains $B_0$ there are no more saturated blocks. We call $G_1$ the subgraph formed by the blocks 
in this connected component, and $G_2$ the subgraph induced by the remaining blocks. Then $G=G_1\cup G_2$ and $G_1\cap G_2\cong K_1$. Now 
$$\mathrm{lk}_{\mathcal{F}_\infty(G)}(v)=\mathrm{lk}_{\mathcal{F}_\infty(G_1)}(v)*\mathrm{lk}_{\mathcal{F}_\infty(G_2)}(v)$$
We will show that $\mathrm{lk}_{\mathcal{F}_\infty(G_1)}(v)\simeq*$. There are two possibilities: 
\begin{enumerate}
    \item $B_0\cong C_3$. Then $V(B_0)=\{v,v_1,v_2\}$ and $G_1=H_1\cup B_0\cup H_2$, with 
    $V(H_1)\cap V(B_0)=\{v_1\}$, $V(H_2)\cap V(B_0)=\{v_2\}$ and $V(H_1)\cap V(H_2)=\emptyset$.  Then, by Lemma \ref{lemlinkvertrian}, 
    $\mathrm{lk}_{\mathcal{F}_\infty(G_1)}(v)\simeq\hocolim(\mathcal{S})$ with $\mathcal{S}$ the diagram:
    $$\mathcal{F}_\infty(H_1)*\mathcal{F}_\infty(H_2-v_2)\longhookleftarrow \mathcal{F}_\infty(H_1-v_1)*\mathcal{F}_\infty(H_2-v_2)\longhookrightarrow \mathcal{F}_\infty(H_1-v_1)*\mathcal{F}_\infty(H_2)$$
    By construction, $G_1$ does not have saturated blocks, then $\delta(H_1-v_1)=1$ or it has a cut vertex of degree $2$. 
    Therefore $\mathcal{F}_\infty(H_1-v_1)\simeq*$. Analogously, $\mathcal{F}_\infty(H_2-v_2)\simeq*$. 
    From this, we get that $\hocolim(\mathcal{S})\simeq*$.
    \item $B_0\cong C_n$ with $n\geq4$. Let $v_1,v_2$ be the neighbors of $v$ in $B_0$ and take $H$ be the graph obtained
    from $G_1$ by erasing $v$ and adding the edge $v_1v_2$. Then 
    $$\mathrm{lk}_{\mathcal{F}_\infty(G_1)}(v)=\mathcal{F}_\infty(H)\simeq*,$$
    because $\mathcal{F}_\infty(H)$ has only one saturated block.
\end{enumerate}
Therefore $\mathrm{lk}_{\mathcal{F}_\infty(G)}(v)\simeq*$ and $\mathcal{F}_\infty(G)\simeq \mathcal{F}_\infty(G-v)$. 
If there is a non-saturated block which contains $v$, then 
$\delta(G-v)=1$ or there is a cut vertex of degree $2$, and therefore $\mathcal{F}_\infty(G)\simeq*$. Assume that there is no non-saturated 
block with $v$ among its vertices. Now, in $G-v$, all the remaining edges of the blocks that contain $v$ are bridges, so we can remove 
them, let $H$ be the graph thus obtained. If $B_0,B_1,\dots,B_{l-1}$ are the blocks that contain $v$, 
with $n_0,n_1,\dots,n_{l-1}$ their respective orders, then  $H=H_1+\cdots+H_r$
where 
$$r=\displaystyle\sum_{i=0}^{l-1}(n_i-1).$$
By inductive hypothesis, each $\mathcal{F}_\infty(H_i)$ is contractible or is homotopy equivalent to a sphere of dimension at least 
$|V(H_i)|-b(H_i)-1$. Then, $\mathcal{F}_\infty(H)$ is contractible or it has the homotopy type of a sphere of dimension at least
$$r-1+\sum_{i=1}^r\left(|V(H_i)|-b(H_i)-1\right)=n-1-(b(G)-l)-1=n-b(G)+l-2>n-b(G)-1.$$
\end{proof}

\subsection{Graph operations}
\subsubsection{Join of graphs}

Given two graphs $G$ and $H$ with disjoint vertex sets, we define their \textit{join} as the graph $G*H$ with 
$V(G*H)=V(G)\cup V(H)$ and 
$$E(G*H)=E(G)\cup E(H)\cup\{uv\colon \;u\in V(G)\mbox{ and }v\in V(H)\}.$$
It is well-known that $\mathcal{F}_0(G*H)=\mathcal{F}_0(G)\sqcup\mathcal{F}_0(H).$
\begin{lem}\label{lemjoingraf}
Let $G$ and $H$ be graphs with disjoint vertex sets with orders $n_1$ and $n_2$ respectively. Then:
\begin{enumerate}
    \item $\displaystyle\mathcal{F}_1(G*H)\simeq\mathcal{F}_1(G)\vee\mathcal{F}_1(H)\vee\bigvee_{n_1n_2-1}\mathbb{S}^1$
    \item If $\mathcal{F}_0(G)$ and $\mathcal{F}_0(H)$ are connected. Then, for all $d\geq2$
$$\mathcal{F}_d(G*H)\simeq\left(\bigvee_{n_2-1}\Sigma\mathrm{sk}_{_{d-1}}\mathcal{F}_0(G)\right)\vee\left(\bigvee_{n_1-1}\Sigma \mathrm{sk}_{_{d-1}}\mathcal{F}_0(H)\right)\vee\left(\bigvee_{(n_1-1)(n_2-1)}\mathbb{S}^2\right)\vee A\vee B$$
with $A=\mathcal{F}_d(G)\cup C(\mathrm{sk}_{_{d-1}} \mathcal{F}_0(G))$ and $B=\mathcal{F}_d(H)\cup C(\mathrm{sk}_{_{d-1}} \mathcal{F}_0(H))$
\end{enumerate}
\end{lem}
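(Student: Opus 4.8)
The plan is to split $\mathcal F_d(G*H)$ according to how a simplex meets the two vertex classes and then reassemble the homotopy type with the pushout machinery of the preliminaries. Write $V(G*H)=V(G)\sqcup V(H)$ with $|V(G)|=n_1$, $|V(H)|=n_2$ (both nonzero, else the statement is degenerate). Since every vertex of $V(G)$ is joined to every vertex of $V(H)$, a set meeting both classes in two or more vertices induces a $C_4$; hence each simplex of $\mathcal F_d(G*H)$ has at most one vertex in $V(G)$ or at most one in $V(H)$. Inspecting the remaining possibilities: if $u\in V(G)$ and $\emptyset\neq S\subseteq V(H)$, then $\{u\}\cup S$ is a simplex iff $S$ is independent in $H$ and $|S|\le d$ (the vertex $u$ has degree $|S|$, and an edge inside $S$ creates a triangle), i.e. $S\in sk_{d-1}\mathcal F_0(H)$; symmetrically with the roles of $G,H$ swapped; and sets inside $V(G)$ (resp. $V(H)$) are exactly the simplices of $\mathcal F_d(G)$ (resp. $\mathcal F_d(H)$). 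So put $X_1=\{\sigma:|\sigma\cap V(G)|\le 1\}=\mathcal F_d(H)\cup P$ and $X_2=\{\sigma:|\sigma\cap V(H)|\le 1\}=\mathcal F_d(G)\cup Q$, where $P=\bigcup_{u\in V(G)}(u*sk_{d-1}\mathcal F_0(H))$ and $Q=\bigcup_{v\in V(H)}(v*sk_{d-1}\mathcal F_0(G))$. Then $X_1\cup X_2=\mathcal F_d(G*H)$, $X_1\cap X_2$ is the complete bipartite $1$-complex $K_{n_1,n_2}$ on $V(G)\sqcup V(H)$, $\mathcal F_d(H)\cap P=sk_{d-1}\mathcal F_0(H)$ and $\mathcal F_d(G)\cap Q=sk_{d-1}\mathcal F_0(G)$. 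All of these are subcomplexes, so each colimit below coincides with the corresponding homotopy colimit.

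For part 2 I would first record an auxiliary gluing fact: if $L\subseteq M$ is a nonempty subcomplex and $M_n=M\cup(w_1*L)\cup\dots\cup(w_n*L)$ is $M$ with $n$ cones on $L$ attached along $L$ (new apices $w_i$), then $M_n\simeq(M\cup C(L))\vee\bigvee_{n-1}\Sigma L$. This is an induction on $n$: $M_{n+1}$ is the homotopy pushout of $M_n\leftarrow L\to w_{n+1}*L$, the right map is null-homotopic since the cone is contractible, and the left map is null-homotopic since $L$ lies inside the contractible subcomplex $w_1*L$ of $M_n$; Lemma~\ref{homocolimpegado} then gives $M_{n+1}\simeq M_n\vee(w_{n+1}*L)\vee\Sigma L\simeq M_n\vee\Sigma L$, completing the induction. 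Applying this with $(M,L)=(\mathcal F_d(H),sk_{d-1}\mathcal F_0(H))$ and $n=n_1$, and with $(M,L)=(\mathcal F_d(G),sk_{d-1}\mathcal F_0(G))$ and $n=n_2$, gives $X_1\simeq B\vee\bigvee_{n_1-1}\Sigma sk_{d-1}\mathcal F_0(H)$ and $X_2\simeq A\vee\bigvee_{n_2-1}\Sigma sk_{d-1}\mathcal F_0(G)$, with $A,B$ as in the statement.

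It then remains to identify the homotopy pushout of $X_1\leftarrow K_{n_1,n_2}\to X_2$, and this is exactly where the hypothesis $d\ge 2$ and the connectedness of $\mathcal F_0(G),\mathcal F_0(H)$ enter. For $d\ge 2$ the complex $sk_{d-1}\mathcal F_0(H)$ contains $sk_1\mathcal F_0(H)$, hence is connected; so $P\simeq\bigvee_{n_1-1}\Sigma sk_{d-1}\mathcal F_0(H)$ is simply connected. Since $K_{n_1,n_2}\subseteq P$, the inclusion $K_{n_1,n_2}\hookrightarrow X_1$ factors through $P$, and a map from a graph into a simply connected space is null-homotopic; symmetrically $K_{n_1,n_2}\hookrightarrow X_2$ factors through the simply connected $Q$. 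By Lemma~\ref{homocolimpegado}, $\mathcal F_d(G*H)\simeq X_1\vee X_2\vee\Sigma K_{n_1,n_2}$. Finally $K_{n_1,n_2}\simeq\bigvee_{(n_1-1)(n_2-1)}\mathbb S^1$, so $\Sigma K_{n_1,n_2}\simeq\bigvee_{(n_1-1)(n_2-1)}\mathbb S^2$; substituting the formulas for $X_1,X_2$ and regrouping gives the expression in part 2.

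Part 1 requires a separate argument because for $d=1$ the set $sk_0\mathcal F_0(H)$ is discrete, so $P$ is no longer simply connected. But when $d=1$ a simplex meeting both classes can only be an edge $\{u,v\}$ (a vertex of degree $2$ is forbidden), so $\mathcal F_1(G*H)=\mathcal F_1(G)\cup\mathcal F_1(H)\cup K_{n_1,n_2}$ with the three pieces meeting only along $V(G)$ and $V(H)$ (as $0$-skeleta), and $\mathcal F_1(G),\mathcal F_1(H)$ are connected since their $1$-skeleta are complete. Choosing a spanning tree $T$ of $K_{n_1,n_2}$ (contractible, containing all vertices, a subcomplex), $\mathcal F_1(G*H)\simeq\mathcal F_1(G*H)/T$; collapsing $T$ identifies all of $V(G)\cup V(H)$ to one point, so $\mathcal F_1(G*H)/T=(K_{n_1,n_2}/T)\vee(\mathcal F_1(G)/V(G))\vee(\mathcal F_1(H)/V(H))$. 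Here $K_{n_1,n_2}/T\cong\bigvee_{(n_1-1)(n_2-1)}\mathbb S^1$, and collapsing the $n_1$ vertices of the connected complex $\mathcal F_1(G)$ to a point gives $\mathcal F_1(G)\vee\bigvee_{n_1-1}\mathbb S^1$ (and likewise for $H$); adding the circles, $(n_1-1)(n_2-1)+(n_1-1)+(n_2-1)=n_1n_2-1$, which is part 1. The main obstacle throughout is the null-homotopy of the two inclusions $K_{n_1,n_2}\hookrightarrow X_i$ in the $d\ge 2$ case: this is precisely what fails for $d=1$ and what makes the connectedness hypothesis necessary, so the bulk of the care goes into pinning down $P,Q$ and their fundamental groups, together with checking that all the intersections really are subcomplexes so that the cited colimit $=$ homotopy colimit results apply.
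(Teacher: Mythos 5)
Your proof is correct and follows essentially the same route as the paper: the same decomposition of $\mathcal{F}_d(G*H)$ into $\mathcal{F}_d(G)$, $\mathcal{F}_d(H)$ and the two families of cones over $sk_{d-1}\mathcal{F}_0(G)$ and $sk_{d-1}\mathcal{F}_0(H)$, glued along $K_{n_1,n_2}$, with the same null-homotopy observations (simple connectivity of the cone pieces for $d\geq2$) feeding into Lemma~\ref{homocolimpegado}. The only cosmetic differences are that you organize the identification of the two halves as an induction on the number of attached cones where the paper pastes two homotopy pushout squares, and for $d=1$ you collapse a spanning tree of $K_{n_1,n_2}$ where the paper iterates two homotopy pushouts; both give the same counts.
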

\begin{proof}
For $d=1$,
$$\mathcal{F}_1(G*H)=\mathcal{F}_1(G)\cup\mathcal{F}_1(H)\cup K_{n_1,n_2}.$$
Now $\mathcal{F}_1(G)\cap\mathcal{F}_1(H)\cap K_{n_1,n_2}=\mathcal{F}_1(G)\cap\mathcal{F}_1(H)=\emptyset$, therefore 
$\mathcal{F}_1(G*H)$ is homotopy equivalent to the homotopy pushout of 
$$X\longleftarrow\mathrm{sk}_0\mathcal{F}_1(G) \longrightarrow\mathcal{F}_1(G),$$
where $X$ is the homotopy pushout of 
$$\mathcal{F}_1(H)\longleftarrow\mathrm{sk}_0\mathcal{F}_1(H)\longrightarrow K_{n_1,n_2}.$$
Thus 
$$X\simeq\mathcal{F}_1(H)\vee\bigvee_{n_1(n_2-1)}\mathbb{S}^1.$$
From this the result follows.

For $d\geq2$,
$$\mathcal{F}_d(G*H)=\mathcal{F}_d(G)\cup\mathcal{F}_d(H)\cup K_1\cup K_2, $$
with $K_1=\bigcup_{u\in V(H)}\{u\}*\mathrm{sk}_{_{d-1}}\mathcal{F}_0(G)$ and 
$K_2=\bigcup_{u\in V(G)}\{u\}*\mathrm{sk}_{_{d-1}}\mathcal{F}_0(H)$. 
Now:
$$K_1\cong\bigvee_{n_2-1}\Sigma\mathrm{sk}_{_{d-1}}\mathcal{F}_0(G),$$
$$K_2\cong\bigvee_{n_1-1}\Sigma\mathrm{sk}_{_{d-1}}\mathcal{F}_0(H).$$
Taking $L_1=\mathcal{F}_d(G)$ and $L_2=\mathcal{F}_d(H)$, we have that
$$L_1\cap L_2=\emptyset,\;K_1\cap L_1=\mathrm{sk}_{_{d-1}}\mathcal{F}_0(G),\;K_2\cap L_2=\mathrm{sk}_{_{d-1}}\mathcal{F}_0(H),\;K_1\cap K_2\cong K_{n_1,n_2},$$
$$L_1\cap K_1\cap K_2=L_1\cap K_2\cong\bigvee_{n_1-1}\mathbb{S}^0,$$
$$L_2\cap K_2\cap K_1=L_2\cap K_1\cong\bigvee_{n_2-1}\mathbb{S}^0.$$
Taking $X=K_1\cup L_1$ and $Y=K_2\cup L_2$, we have that $\mathcal{F}_d(G*H)=X\cup Y$ and 
$X\cap Y=\left(L_1\cap K_2\right)\cup\left(L_2\cap K_1\right)\cup\left(K_1\cap K_2\right)=K_1\cap K_2$.
Therefore $\mathcal{F}(G*H,d)\simeq \hocolim(\mathcal{S})$ with 
$$\mathcal{S}\colon \;X\longhookleftarrow K_{n_1,n_2}\longhookrightarrow Y$$
Now, the inclusion $i\colon K_{n_1,n_2}\longhookrightarrow X$ is really the inclusion 
$K_{n_1,n_2}\longhookrightarrow K_1$, which is null-homotopic, and therefore $i$ is null-homotopic. In the same way we see 
that the inclusion in $Y$ is null-homotopic and that
$$\mathcal{F}_d(G*H)\simeq X\vee Y\vee\bigvee_{_{(n_1-1)(n_2-1)}}\mathbb{S}^2.$$
Now, $K_1\cap L_1=\mathrm{sk}_{_{d-1}}\mathcal{F}_0(G)$ and its inclusion in $K_1$ is null-homotopic, therefore we can compute the homotopy type of $X$ by pasting these two homotopy pushout squares:
\begin{equation*}
    \xymatrix{
    \mathrm{sk}_{_{d-1}}\mathcal{F}_0(G) \ar@{->}[r] \ar@{->}[d] & \ast \ar@{->}[r] \ar@{->}[d] & K_1 \ar@{->}[d] \\
    L_1 \ar@{->}[r] & L_1\cup C(\mathrm{sk}_{_{d-1}}\mathcal{F}_0(G)) \ar@{->}[r] & K_1\vee (L_1\cup C(\mathrm{sk}_{_{d-1}}\mathcal{F}_0(G))) \simeq X
    }
\end{equation*}
Now $L_1\cup C(\mathrm{sk}_{_{d-1}}\mathcal{F}_0(G))=A$. With an similar argument for $Y$ we arrive at the result.
\end{proof}

With the last lemma we can construct graphs for which $\mathcal{F}_\infty(G)$ have not the homotopy type of a wedge of spheres.
Let $K$ be a triangulation of the projective plane and let $H$ be the complement graph of the $1$-skeleton of the baricentric 
subdivision, then $\mathcal{F}_0(H)\cong \mathbb{RP}^2$ and $G=P_4*H$ is a graph such that $\mathcal{F}_d(G)$ has torsion for all $d\geq3$.

\begin{lem}\label{lemcono}
Let $G$ be a graphof order $n$, then for all $d\geq2$
$$\mathcal{F}_d(K_1*G)\simeq\mathcal{F}_d(G)\cup C(\mathrm{sk}_{_{d-1}}\mathcal{F}_0(G)),$$
and for $d=1$
$$\mathcal{F}_1(G)\simeq\mathcal{F}_1(G)\vee\bigvee_{n-1}\mathbb{S}^1$$
\end{lem}
\begin{proof}
The link of the apex vertex is $\mathrm{sk}_{_{d-1}}\mathcal{F}_0(G)$, thus the homotopy pushout square
\begin{equation*}
    \xymatrix{
    \mathrm{sk}_{_{d-1}}\mathcal{F}_0(G) \ar@{->}[r] \ar@{->}[d] & \ast  \ar@{->}[d]\\
    \mathcal{F}_d(G) \ar@{->}[r] & \mathcal{F}_d(G)\cup C(\mathrm{sk}_{_{d-1}}\mathcal{F}_0(G)) 
    }
\end{equation*}
computes $\mathcal{F}(K_1*G,d)$. Now, for $d=1$, $\mathcal{F}_1(G)$ is connected and $\mathrm{sk}_{0}\mathcal{F}_0(G)$
is the disjoint union of $n$ points, the result follows by Lemma \ref{homocolimpegado}. 
\end{proof}

\begin{theorem}
For the complete bipartite graph we have that $\mathcal{F}_0(K_{n,m})\simeq\mathbb{S}^0$, 
$$\mathcal{F}_1(K_{n,m})\simeq\bigvee_{nm-1}\mathbb{S}^1,$$
$$\mathcal{F}_d(K_{n,m})\simeq\bigvee_{(n-1)(m-1)}\mathbb{S}^2\vee\bigvee_{n\binom{m-1}{d}+m\binom{n-1}{d}}\mathbb{S}^d,$$
for $\infty>d\geq2$ and 
$$\mathcal{F}_\infty(K_{n,m})\simeq\bigvee_{(n-1)(m-1)}\mathbb{S}^2.$$
\end{theorem}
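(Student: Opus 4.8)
The plan is to write $K_{n,m}$ as a join and apply Lemma~\ref{lemjoingraf}. Let $\overline{K_n}$ denote the edgeless graph on $n$ vertices, so that $K_{n,m}=\overline{K_n}*\overline{K_m}$. Since $\overline{K_n}$ has no edges, every vertex subset spans an edgeless (hence maximum-degree-$0$) forest, so $\mathcal{F}_d(\overline{K_n})=\Delta^{n-1}$ for every $d\ge 0$; in particular this complex is contractible and connected, so the hypotheses of both parts of Lemma~\ref{lemjoingraf} are met. Moreover $\mathcal{F}_0(\overline{K_n})=\Delta^{n-1}$, and the standard fact $sk_{k}\Delta^{N}\simeq\bigvee_{\binom{N}{k+1}}\mathbb{S}^{k}$ gives $sk_{d-1}\mathcal{F}_0(\overline{K_n})=sk_{d-1}\Delta^{n-1}\simeq\bigvee_{\binom{n-1}{d}}\mathbb{S}^{d-1}$, and likewise with $n$ replaced by $m$.

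The cases $d=0$ and $d=1$ are immediate: $\mathcal{F}_0(K_{n,m})=\mathcal{F}_0(\overline{K_n})\sqcup\mathcal{F}_0(\overline{K_m})=\Delta^{n-1}\sqcup\Delta^{m-1}\simeq\mathbb{S}^0$, and part (1) of Lemma~\ref{lemjoingraf} gives $\mathcal{F}_1(K_{n,m})\simeq\mathcal{F}_1(\overline{K_n})\vee\mathcal{F}_1(\overline{K_m})\vee\bigvee_{nm-1}\mathbb{S}^1\simeq\bigvee_{nm-1}\mathbb{S}^1$, the first two wedge factors being contractible.

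For $2\le d<\infty$ I would substitute the computations above into part (2) of Lemma~\ref{lemjoingraf}. Each suspended skeleton is $\Sigma sk_{d-1}\Delta^{n-1}\simeq\bigvee_{\binom{n-1}{d}}\mathbb{S}^{d}$, so the first two wedge summands contribute $(m-1)\binom{n-1}{d}+(n-1)\binom{m-1}{d}$ spheres of dimension $d$. The summand $A=\mathcal{F}_d(\overline{K_n})\cup C(sk_{d-1}\mathcal{F}_0(\overline{K_n}))$ is the mapping cone of the inclusion $sk_{d-1}\Delta^{n-1}\hookrightarrow\Delta^{n-1}$; since the target is contractible, this cone is homotopy equivalent to $\Sigma sk_{d-1}\Delta^{n-1}\simeq\bigvee_{\binom{n-1}{d}}\mathbb{S}^{d}$ (alternatively, compute its homology by Mayer--Vietoris and apply Theorem~\ref{cwsimplcon}), and similarly $B\simeq\bigvee_{\binom{m-1}{d}}\mathbb{S}^{d}$. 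Keeping the $(n-1)(m-1)$ copies of $\mathbb{S}^2$ from the remaining summand and collecting all the $\mathbb{S}^d$-summands,
\[
(m-1)\binom{n-1}{d}+(n-1)\binom{m-1}{d}+\binom{n-1}{d}+\binom{m-1}{d}=m\binom{n-1}{d}+n\binom{m-1}{d},
\]
which gives the asserted formula (when $d=2$ the $\mathbb{S}^2$-summands simply merge into the count).

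Finally, for $d=\infty$: every induced subgraph of $K_{n,m}$ has maximum degree at most $\max(n,m)$, so $\mathcal{F}_\infty(K_{n,m})=\mathcal{F}_d(K_{n,m})$ for every $d\ge\max(n,m)$. For such $d$ both $\binom{n-1}{d}$ and $\binom{m-1}{d}$ vanish, so the formula just established reduces to $\bigvee_{(n-1)(m-1)}\mathbb{S}^2$. The only genuinely delicate part of the argument is the bookkeeping of the five groups of wedge summands in the range $2\le d<\infty$; identifying $A$ and $B$ as suspensions and noting the vanishing of the binomial coefficients for large $d$ are the two small points one must not overlook.
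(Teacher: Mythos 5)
Your proposal is correct and takes essentially the same route as the paper: decompose $K_{n,m}=K_n^c * K_m^c$, apply Lemma~\ref{lemjoingraf}, identify the summands $A$ and $B$ as wedges of $d$-spheres, and collect the counts to get $m\binom{n-1}{d}+n\binom{m-1}{d}$. The only difference is that you spell out the mapping-cone identification of $A$ and $B$ and the reduction of the $d=\infty$ case to $d\geq\max(n,m)$ (where the binomial coefficients vanish), both of which the paper leaves implicit.
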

\begin{proof}
If $d=0$ is clear. The case $d=1$ is a particular case of Lemma \ref{lemjoingraf}. 
For $d\geq2$, by Lemma \ref{lemjoingraf} 
$$\mathcal{F}_d(K_{n,m})\simeq\left(\bigvee_{m-1}\Sigma\mathrm{sk}_{_{d-1}}\mathcal{F}_0(K_n^c)\right)\vee\left(\bigvee_{n-1}\Sigma \mathrm{sk}_{_{d-1}}\mathcal{F}_0(K_m^c)\right)\vee\left(\bigvee_{(n-1)(m-1)}\mathbb{S}^2\right)\vee A\vee B$$
with $A=\mathcal{F}_d(K_n^c)\cup C(\mathrm{sk}_{_{d-1}} \mathcal{F}_0(K_n^c))$ and $B=\mathcal{F}_d(K_m^c)\cup C(\mathrm{sk}_{_{d-1}} \mathcal{F}_0(K_m^c))$.

Now, for all $d,k,r$, 
$$\mathcal{F}_d(K_k^c)\cong\Delta^{k-1},\;\mathrm{sk}_r\mathcal{F}_d(K_k^c)\simeq\bigvee_{\binom{k-1}{r+1}}\mathbb{S}^r;$$
therefore
$$A\simeq\bigvee_{\binom{n-1}{d}}\mathbb{S}^d;\;B\simeq\bigvee_{\binom{m-1}{d}}\mathbb{S}^d,$$
from which we obtain the result. 
\end{proof}

\begin{cor}
Let $G_1,G_2,\dots,G_k$ be vertex disjoint graphs. For $d\geq1$, if $\mathcal{F}_d(G_i)\simeq *$ for all $i$, then
$$\mathcal{F}_d(G_1*G_2*\dots*G_k)\simeq\bigvee_{\frac{(k-1)(k-2)}{2}}\mathbb{S}^1\vee\bigvee_{i<j}\mathcal{F}_d(G_i*G_j)$$
\end{cor}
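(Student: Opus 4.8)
The plan is to realise $\mathcal{F}_d(G_1*\cdots*G_k)$ as a union of copies of $\mathcal{F}_d(G_i*G_j)$ indexed by the edges of the complete graph $K_k$, and then run Lemma~\ref{lemhomotopiagraf}. The case $k=1$ is the hypothesis itself, so I would assume $k\ge2$, write $G=G_1*\cdots*G_k$ and $X_{ij}=\mathcal{F}_d(G_i*G_j)$ for $i<j$.

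First I would pin down the combinatorics. Since $V(G_i)\cup V(G_j)$ induces exactly $G_i*G_j$ inside $G$, each $X_{ij}$ is a subcomplex of $\mathcal{F}_d(G)$; conversely, a simplex $\sigma$ of $\mathcal{F}_d(G)$ cannot meet three distinct parts $V(G_{i_1}),V(G_{i_2}),V(G_{i_3})$, because one vertex from each would span a triangle in $G[\sigma]$, so $\sigma$ lies in $V(G_i)\cup V(G_j)$ for some pair, and if it meets only $V(G_i)$ then $\sigma\in\mathcal{F}_d(G_i)\subseteq X_{ij}$ for any $j\ne i$. Hence $\mathcal{F}_d(G)=\bigcup_{i<j}X_{ij}$. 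The same bookkeeping of supports gives the intersection pattern: a simplex lying in $X_{ij}$ and $X_{lm}$ is supported on $\bigl(V(G_i)\cup V(G_j)\bigr)\cap\bigl(V(G_l)\cup V(G_m)\bigr)$, so for a set $S$ of index pairs, $\bigcap_{(ij)\in S}X_{ij}=\mathcal{F}_d(G_v)$ when all pairs in $S$ share one common index $v$, and is the void complex otherwise; in particular two $X_{ij}$ sharing exactly one index $v$ meet in $\mathcal{F}_d(G_v)$.

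Next I would check the hypotheses of Lemma~\ref{lemhomotopiagraf} for the family $\{X_{ij}\}_{i<j}$ with the auxiliary graph $K_k$ and the bijection $\gamma$ sending the index $ij$ to the edge $\{i,j\}$ of $K_k$ (a bijection, since $K_k$ has $\binom{k}{2}$ edges). Each $X_{ij}$ is connected because for $d\ge1$ its $1$-skeleton contains the complete bipartite graph on $V(G_i)\sqcup V(G_j)$ --- every such pair of vertices induces a $K_2$, a forest of maximum degree $1\le d$; every multiple intersection of the $X_{ij}$ is contractible (equal to some $\mathcal{F}_d(G_v)\simeq *$ by hypothesis) or void; and the nerve condition is immediate, since a set of edges of $K_k$ has a common vertex exactly when the corresponding index pairs have a common index, i.e.\ exactly when the corresponding $X_{ij}$ intersect. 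So Lemma~\ref{lemhomotopiagraf} applies and $\mathcal{F}_d(G)$ has the homotopy type of the nerve $N$ of $\{X_{ij}\}$ with each $X_{ij}$ attached at the corresponding vertex of $N$.

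Finally I would identify $N$: its vertices are the edges of $K_k$ and its faces are the sets of edges of $K_k$ with a common vertex, so $N=\bigcup_{v\in V(K_k)}S_v$ with $S_v$ the full simplex on the $k-1$ edges incident to $v$; the $S_v$ are contractible, $S_v\cap S_w$ is the single vertex $\{v,w\}$, and triple intersections are empty, so the nerve lemma identifies $N$ up to homotopy with the nerve of $\{S_v\}$, namely the $1$-skeleton of $\Delta^{k-1}$, i.e.\ $K_k$ viewed as a graph --- a connected graph with $k$ vertices and $\binom{k}{2}$ edges, hence $N\simeq\bigvee_{\binom{k}{2}-k+1}\mathbb{S}^1=\bigvee_{(k-1)(k-2)/2}\mathbb{S}^1$. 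Since $N$ and each $X_{ij}$ are connected, attaching the $X_{ij}$ to $N$ produces a space homotopy equivalent to $N\vee\bigvee_{i<j}X_{ij}$, which is the asserted wedge. I expect the main obstacle to be the first part: getting the description of $\mathcal{F}_d(G)$ and all of its iterated intersections exactly right so that Lemma~\ref{lemhomotopiagraf} applies cleanly, together with the bookkeeping that turns its ``nerve with complexes attached'' output into the stated wedge.
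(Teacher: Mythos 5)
Your proposal is correct and follows essentially the same route as the paper: decompose $\mathcal{F}_d(G_1*\cdots*G_k)$ into the subcomplexes $X_{ij}=\mathcal{F}_d(G_i*G_j)$ indexed by the edges of $K_k$, check that all multiple intersections are some $\mathcal{F}_d(G_v)\simeq\ast$ or empty, and apply Lemma~\ref{lemhomotopiagraf}. The only difference is that you spell out the identification of the nerve with a wedge of $(k-1)(k-2)/2$ circles and the final wedge bookkeeping, which the paper leaves implicit.
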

\begin{proof}
Let $V_i$ be the vertex set of $G_i$ and take $G=G_1*G_2*\dots*G_k$. 
If we take vertices from more than two sets of the partition, we will always have a cycle, and therefore each facet of the complex 
is contained in $V_i\cup V_j$ for some $i\neq j$. Then, taking $X_{ij}=\mathcal{F}_d(G\left[V_i\cup V_j\right])$ for 
$i<j$, we have that $\displaystyle \mathcal{F}_d(G)=\bigcup_{i<j}X_{ij}$ and we can define a bijection 
$\gamma\colon \{ij\colon \;i<j\}\longrightarrow E(K_k)$ such that the hypothesis of Lemma \ref{lemhomotopiagraf} are 
achieved. 
\end{proof}

As an immediate consequence we have the homotopy type for the multipartite graphs
\begin{cor}\label{multi}
For $d\geq1$,
$$\mathcal{F}_d(K_{n_1,\dots,n_k})\simeq\bigvee_{\frac{(k-1)(k-2)}{2}}\mathbb{S}^1\vee\bigvee_{i<j}\mathcal{F}_d(K_{n_i,n_j}).$$
\end{cor}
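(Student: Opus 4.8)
The plan is to realize the complete multipartite graph as an iterated join of edgeless graphs and then invoke the preceding corollary directly. Writing $K_n^c$ for the graph on $n$ vertices with no edges, we have
$$K_{n_1,\dots,n_k} = K_{n_1}^c * K_{n_2}^c * \cdots * K_{n_k}^c,$$
since adding all edges between the parts is exactly the join operation and each part is internally edgeless.

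First I would verify the hypothesis of the preceding corollary for the graphs $G_i = K_{n_i}^c$. Any subset of $V(K_{n_i}^c)$ induces a graph with no edges, which is a forest of maximum degree $0\leq d$; hence $\mathcal{F}_d(K_{n_i}^c) = \Delta^{n_i-1}$, a full simplex, which is contractible for every $d\geq 1$ (indeed for every $d\geq 0$). So the requirement $\mathcal{F}_d(G_i)\simeq *$ is met. Next I would note that for $i\neq j$ the two-part join $K_{n_i}^c * K_{n_j}^c$ is precisely the complete bipartite graph $K_{n_i,n_j}$. Applying the preceding corollary with $G_i=K_{n_i}^c$ then gives
$$\mathcal{F}_d(K_{n_1,\dots,n_k}) \simeq \bigvee_{\frac{(k-1)(k-2)}{2}}\mathbb{S}^1 \vee \bigvee_{i<j}\mathcal{F}_d\bigl(K_{n_i}^c * K_{n_j}^c\bigr) = \bigvee_{\frac{(k-1)(k-2)}{2}}\mathbb{S}^1 \vee \bigvee_{i<j}\mathcal{F}_d(K_{n_i,n_j}),$$
which is the assertion.

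There is essentially no obstacle here: the only point requiring care is that the preceding corollary demands nothing beyond contractibility of each $\mathcal{F}_d(G_i)$ (no connectivity hypothesis on $\mathcal{F}_0(G_i)$ enters), so once the full-simplex observation is in place the conclusion is immediate.
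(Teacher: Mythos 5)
Your argument is exactly the intended one: the paper presents this corollary as an immediate consequence of the preceding corollary, obtained by writing $K_{n_1,\dots,n_k}$ as the join of the edgeless graphs $K_{n_i}^c$, whose $d$-forest complexes are full simplices and hence contractible. Your verification of the hypothesis and the identification $K_{n_i}^c * K_{n_j}^c = K_{n_i,n_j}$ are correct, so the proposal matches the paper's proof.
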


\begin{theorem}\label{kozlovciclos}\citep{kozlovdire}
$$\mathcal{F}_0(C_n)\simeq\left\lbrace\begin{array}{cc}
    \mathbb{S}^{r-1}\vee\mathbb{S}^{r-1} & \mbox{ if }n=3r \\
    \mathbb{S}^{r-1} & \mbox{ if }n=3r+1\\
    \mathbb{S}^{r} &\mbox{ if }n=3r+2
\end{array}
\right.$$
\end{theorem}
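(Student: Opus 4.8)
The plan is to reduce everything to the independence complexes of paths via one well-chosen homotopy pushout. First I would record the standard pushout coming from deleting a vertex: for any vertex $v$ of a graph $G$ one has $\mathcal{F}_0(G)=st(v)\cup\mathcal{F}_0(G-v)$, where $st(v)=\{v\}*lk(v)$ is a cone (hence contractible), $\mathcal{F}_0(G-v)$ is the full subcomplex on $V(G)-\{v\}$, and $st(v)\cap\mathcal{F}_0(G-v)=lk(v)=\mathcal{F}_0(G-N[v])$ with $N[v]=N(v)\cup\{v\}$. These are subcomplexes with subcomplex intersection, so the associated punctured $2$-cube is cofibrant and, as recalled in the preliminaries, $\hocolim\simeq\colim$; thus $\mathcal{F}_0(G)$ is the mapping cone of the inclusion $\mathcal{F}_0(G-N[v])\hookrightarrow\mathcal{F}_0(G-v)$, that is, $\mathcal{F}_0(G)\simeq\hocolim\bigl(\mathcal{F}_0(G-v)\longleftarrow\mathcal{F}_0(G-N[v])\longrightarrow *\bigr)$.

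Second I would compute $\mathcal{F}_0(P_n)$. Applying this pushout at the second vertex $v_2$ of $P_n$ (for $n\ge 4$) gives $P_n-v_2=P_1\sqcup P_{n-2}$, so by the disjoint-union lemma $\mathcal{F}_0(P_n-v_2)=\mathcal{F}_0(P_1)*\mathcal{F}_0(P_{n-2})$ is a cone, hence contractible, while $P_n-N[v_2]=P_{n-3}$. Therefore $\mathcal{F}_0(P_n)\simeq\hocolim(*\leftarrow\mathcal{F}_0(P_{n-3})\rightarrow *)=\Sigma\mathcal{F}_0(P_{n-3})$. Using $\mathcal{F}_0(P_1)\simeq *$, $\mathcal{F}_0(P_2)\simeq\mathbb{S}^0$ and $\mathcal{F}_0(P_3)\simeq\mathbb{S}^0$ as base cases, induction yields $\mathcal{F}_0(P_n)\simeq *$ if $n\equiv 1\pmod 3$, $\mathcal{F}_0(P_{3k})\simeq\mathbb{S}^{k-1}$, and $\mathcal{F}_0(P_{3k+2})\simeq\mathbb{S}^{k}$.

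Third, for $C_n$ with $n\ge 5$ (the cases $n=3,4$ are immediate by direct inspection) I would apply the deletion pushout at an arbitrary vertex $v$: $C_n-v=P_{n-1}$ and $C_n-N[v]=P_{n-3}$, the latter sitting inside $P_{n-1}$ as the full subcomplex on the ``middle'' $n-3$ vertices, so $\mathcal{F}_0(C_n)$ is the mapping cone of $\iota\colon\mathcal{F}_0(P_{n-3})\hookrightarrow\mathcal{F}_0(P_{n-1})$. The key point is that $\iota$ factors as $\mathcal{F}_0(P_{n-3})\hookrightarrow\mathcal{F}_0(P_{n-2})\hookrightarrow\mathcal{F}_0(P_{n-1})$ (restoring one endpoint at a time), and that in each residue class modulo $3$ exactly one of $\mathcal{F}_0(P_{n-3})$, $\mathcal{F}_0(P_{n-2})$, $\mathcal{F}_0(P_{n-1})$ is contractible. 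If $n=3r+1$ then $\mathcal{F}_0(P_{n-3})\simeq *$, so the pushout collapses to $\mathcal{F}_0(C_n)\simeq\mathcal{F}_0(P_{n-1})=\mathcal{F}_0(P_{3r})\simeq\mathbb{S}^{r-1}$. If $n=3r+2$ then $\mathcal{F}_0(P_{n-1})\simeq *$, so $\mathcal{F}_0(C_n)\simeq\Sigma\mathcal{F}_0(P_{n-3})=\Sigma\mathcal{F}_0(P_{3(r-1)+2})\simeq\Sigma\mathbb{S}^{r-1}=\mathbb{S}^{r}$. If $n=3r$ then $\mathcal{F}_0(P_{n-2})\simeq *$, so $\iota$ is null-homotopic; since the map $\mathcal{F}_0(P_{n-3})\to *$ is also null-homotopic, Lemma \ref{homocolimpegado} gives $\mathcal{F}_0(C_n)\simeq\mathcal{F}_0(P_{n-1})\vee\Sigma\mathcal{F}_0(P_{n-3})\simeq\mathbb{S}^{r-1}\vee\Sigma\mathbb{S}^{r-2}=\mathbb{S}^{r-1}\vee\mathbb{S}^{r-1}$.

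The main obstacle is exactly the case $n\equiv 0\pmod 3$, where the homotopy class of the gluing map $\iota$ genuinely affects the answer; the whole argument turns on the factorization of $\iota$ through the contractible complex $\mathcal{F}_0(P_{n-2})$, after which Lemma \ref{homocolimpegado} applies verbatim. Some minor bookkeeping is also needed with low-dimensional conventions (so that $\Sigma\mathbb{S}^{-1}=\mathbb{S}^0$ and $\Sigma *\simeq *$) and with the hypothesis that the wedge summands in Lemma \ref{homocolimpegado} be connected, which is what forces the separate treatment of the very small cycles $C_3$ and $C_4$.
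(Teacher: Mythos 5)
Your argument is correct. Note, however, that the paper itself gives no proof of this statement: it is quoted from Kozlov \citep{kozlovdire} as a known result, so there is nothing internal to compare against step by step. What you have produced is a self-contained proof built entirely from the paper's own toolkit: the cofibrant star--link--deletion pushout $\mathcal{F}_0(G)\simeq\hocolim\bigl(\mathcal{F}_0(G-v)\longleftarrow\mathcal{F}_0(G-N[v])\longrightarrow\ast\bigr)$, the join formula for disjoint unions, and Lemma \ref{homocolimpegado} for the one case ($n\equiv0\bmod 3$) where the gluing map matters; this mirrors exactly how the paper handles $\mathcal{F}_1(P_n)$ and $\mathcal{F}_1(C_n)$ in Section 4. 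Your verifications check out: the recursion $\mathcal{F}_0(P_n)\simeq\Sigma\mathcal{F}_0(P_{n-3})$ obtained by deleting the second vertex, the identifications $C_n-v=P_{n-1}$ and $C_n-N[v]=P_{n-3}$, and the key observation that the inclusion $\mathcal{F}_0(P_{n-3})\hookrightarrow\mathcal{F}_0(P_{n-1})$ factors through the contractible $\mathcal{F}_0(P_{n-2})$ when $n=3r$, which makes it null-homotopic and yields $\mathbb{S}^{r-1}\vee\mathbb{S}^{r-1}$. Kozlov's original treatment derives essentially the same recursions (including $\mathcal{F}_0(C_n)\simeq\Sigma\mathcal{F}_0(C_{n-3})$) by collapsing/cofiber arguments, whereas you reduce cycles to paths in a single deletion step; the two routes are equally elementary, and yours has the advantage of reusing machinery already set up in this paper. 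Your caution about small cases is appropriate but mild: Lemma \ref{homocolimpegado} as stated imposes no connectivity hypothesis, and treating $C_3$, $C_4$ (and the degenerate suspension conventions) by inspection, as you do, closes any gap.
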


\begin{prop}
Let $W_{n+1}$ be the wheel on $n+1$ vertices, then 
$$\mathcal{F}_d(W_{n+1})\simeq\left\lbrace\begin{array}{cc}
\mathbb{S}^{3r-2}\vee\mathbb{S}^{r}\vee\mathbb{S}^{r}&\mbox{if }n=3r\\
\mathbb{S}^{3r-1}\vee\mathbb{S}^{r}&\mbox{if }n=3r+1\\
\mathbb{S}^{3r}\vee\mathbb{S}^{r+1}&\mbox{if }n=3r+2
\end{array}\right.$$
for $d>\lfloor\frac{n}{2}\rfloor-1$ and
$$\mathcal{F}_1(W_{n+1})\simeq\left\lbrace\begin{array}{cc}
    \displaystyle\bigvee_{3}\mathbb{S}^{2r-1}\vee\bigvee_{n-1}\mathbb{S}^1 & \mbox{ if } n=4r\\
    \displaystyle\mathbb{S}^{2r-1}\vee\bigvee_{n-1}\mathbb{S}^1 & \mbox{ if } n=4r+1\\
    \displaystyle\mathbb{S}^{2r}\vee\bigvee_{n-1}\mathbb{S}^1 & \mbox{ if } n=4r+2\\
    \displaystyle\mathbb{S}^{2r+1}\vee\bigvee_{n-1}\mathbb{S}^1 & \mbox{ if } n=4r+3\\
\end{array}\right.$$
\end{prop}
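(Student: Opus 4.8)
The two ranges of $d$ reduce to different results already established. For $d=1$, note that $W_{n+1}$ is the join $C_n*K_1$ obtained by coning the hub onto the rim cycle. Applying Lemma~\ref{lemjoingraf}(1) with $G=C_n$ of order $n$ and $H=K_1$ of order $1$, and using $\mathcal{F}_1(K_1)\simeq\ast$, we get
$$\mathcal{F}_1(W_{n+1})\simeq\mathcal{F}_1(C_n)\vee\bigvee_{n-1}\mathbb{S}^1 .$$
Substituting the four values of $\mathcal{F}_1(C_n)$ from Corollary~\ref{corciclos1} immediately yields the four cases in the statement, so the $d=1$ part needs nothing new.

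For $d>\lfloor n/2\rfloor-1$ the first step is to show that $\mathcal{F}_d(W_{n+1})=\mathcal{F}_\infty(W_{n+1})$ in this range. Any induced forest of $W_{n+1}$ that omits the hub $h$ is an induced subgraph of $C_n$, hence has maximum degree at most $2$; any induced forest containing $h$ meets the rim in an independent set $I$ of $C_n$ (otherwise $h$ closes a triangle) and induces the star $K_{1,|I|}$ with $|I|\le\lfloor n/2\rfloor$. So for $n\ge4$ every face of $\mathcal{F}_\infty(W_{n+1})$ already lies in $\mathcal{F}_{\lfloor n/2\rfloor}(W_{n+1})$, and $\mathcal{F}_d(W_{n+1})=\mathcal{F}_\infty(W_{n+1})$ whenever $d\ge\lfloor n/2\rfloor$. (For $n=3$ we have $W_4=K_4$, which is covered by the earlier formula for $\mathcal{F}_d(K_n)$; so assume $n\ge4$.)

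I would then compute $\mathcal{F}_\infty(W_{n+1})$ via the hub. Write $\mathcal{F}_\infty(W_{n+1})=st(h)\cup\mathcal{F}_\infty(C_n)$, a union of two subcomplexes: since $W_{n+1}-h=C_n$ and every proper vertex subset of $C_n$ induces a forest, $\mathcal{F}_\infty(C_n)=\partial\Delta^{n-1}\cong\mathbb{S}^{n-2}$; and by the description of the faces through $h$, the intersection $st(h)\cap\mathcal{F}_\infty(C_n)=lk(h)$ is exactly the independence complex $\mathcal{F}_0(C_n)$. As these are subcomplexes of a simplicial complex the associated punctured square is cofibrant, so
$$\mathcal{F}_\infty(W_{n+1})\simeq\hocolim\bigl(st(h)\longleftarrow\mathcal{F}_0(C_n)\longrightarrow\mathcal{F}_\infty(C_n)\bigr),$$
and replacing the contractible $st(h)$ by a point this becomes the homotopy pushout of $\ast\longleftarrow\mathcal{F}_0(C_n)\longrightarrow\mathbb{S}^{n-2}$.

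The one point requiring an argument is that the inclusion $\mathcal{F}_0(C_n)\hookrightarrow\mathcal{F}_\infty(C_n)\simeq\mathbb{S}^{n-2}$ is null-homotopic, and this is a dimension count: $\dim\mathcal{F}_0(C_n)=\lfloor n/2\rfloor-1\le n-3$ for every $n\ge3$, while $\mathbb{S}^{n-2}$ is $(n-3)$-connected, so any map from a complex of dimension at most $n-3$ into it is null-homotopic. With both legs of the pushout null-homotopic, Lemma~\ref{homocolimpegado} gives
$$\mathcal{F}_\infty(W_{n+1})\simeq\mathbb{S}^{n-2}\vee\Sigma\mathcal{F}_0(C_n).$$
Substituting the homotopy type of $\mathcal{F}_0(C_n)$ from Theorem~\ref{kozlovciclos} and using that suspension raises each sphere's dimension by one then produces $\mathbb{S}^{3r-2}\vee\mathbb{S}^{r}\vee\mathbb{S}^{r}$ when $n=3r$, $\mathbb{S}^{3r-1}\vee\mathbb{S}^{r}$ when $n=3r+1$, and $\mathbb{S}^{3r}\vee\mathbb{S}^{r+1}$ when $n=3r+2$, as claimed. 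I expect the null-homotopy claim to be the only step beyond assembling cited results, and the connectivity bound settles it.
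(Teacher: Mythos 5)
Your argument is correct and follows essentially the same route as the paper: both decompose $\mathcal{F}_d(W_{n+1})$ at the hub into the (contractible) star and $\mathcal{F}_d(C_n)\simeq\mathbb{S}^{n-2}$, identify the link of the hub with $\mathcal{F}_0(C_n)$ (the paper phrases this via Lemma \ref{lemcono} and $sk_{d-1}\mathcal{F}_0(C_n)=\mathcal{F}_0(C_n)$ for $d>\lfloor n/2\rfloor-1$, you phrase it as $\mathcal{F}_d=\mathcal{F}_\infty$ in that range), observe the inclusion into $\mathbb{S}^{n-2}$ is null-homotopic, and conclude $\mathbb{S}^{n-2}\vee\Sigma\mathcal{F}_0(C_n)$ via Theorem \ref{kozlovciclos}. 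Your $d=1$ case via Lemma \ref{lemjoingraf}(1) is the same computation the paper carries out with $sk_0\mathcal{F}_0(C_n)\simeq\bigvee_{n-1}\mathbb{S}^0$, so no substantive difference there either.
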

\begin{proof}
Since $\alpha(C_n)=\lfloor\frac{n}{2}\rfloor$, for $d>\lfloor\frac{n}{2}\rfloor-1$ we have that 
$\mathcal{F}_0(C_n)=\mathrm{sk}_{_{d-1}}\mathcal{F}_0(C_n)$. By Lemma \ref{lemcono},
$$\mathcal{F}_d(W_{n+1})\simeq \mathcal{F}_d(C_n)\cup C(\mathcal{F}_0(C_n)).$$
By Theorem \ref{kozlovciclos}, the inclusion of the intersection is null-homotopic, therefore
$$\mathcal{F}_d(W_{n+1})\simeq \mathcal{F}_\infty(C_n)\vee\Sigma\mathcal{F}_0(C_n)$$
For $d=1$, rhe result follows from Lemma \ref{lemcono} and Corollary \ref{corciclos1}.
\end{proof}

\subsubsection{Graph products}

\begin{prop}
$$\mathcal{F}_\infty\left(P_2\oblong P_k\right)\simeq\left\lbrace\begin{array}{cc}
    \mathbb{S}^{4r-1} & \mbox{if } k=3r \\
    * & \mbox{if } k=3r+1 \\
    \mathbb{S}^{4r+2} & \mbox{if } k=3r+2.
\end{array}
\right.$$
\end{prop}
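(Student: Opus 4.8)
The plan is to establish the recursion $\mathcal{F}_\infty(P_2\oblong P_k)\simeq\Sigma^4\mathcal{F}_\infty(P_2\oblong P_{k-3})$ for $k\geq 4$ and then read off the three residue classes from three small base cases. Throughout write $L_k=P_2\oblong P_k$, with vertices $t_1,\dots,t_k$ (top rail) and $b_1,\dots,b_k$ (bottom rail), rungs $t_ib_i$, and rails $t_it_{i+1},b_ib_{i+1}$. First I would record the combinatorial description of the complex: a set $\sigma\subseteq V(L_k)$ induces a forest if and only if $\sigma$ contains none of the $k-1$ squares $Q_j=\{t_j,t_{j+1},b_j,b_{j+1}\}$. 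Indeed a cycle of $L_k$ is a connected sum, in the cycle space, of the unit-square faces $Q_1,\dots,Q_{k-1}$, hence the rectangle spanned by a block of consecutive columns, whose vertex set contains some $Q_j$; conversely $Q_j$ induces a $4$-cycle. Thus $\mathcal{F}_\infty(L_k)$ has minimal non-faces exactly $Q_1,\dots,Q_{k-1}$, so every link, star and deletion appearing below is a forest complex of a smaller ladder, or a join of such.

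The core of the argument is a chain of homotopy-pushout reductions at the corner $b_k$, in the style of the path and cycle computations above. Since $t_k$ becomes a leaf in $L_k-b_k$, we have $\mathcal{F}_\infty(L_k-b_k)=\{t_k\}\ast\mathcal{F}_\infty(L_{k-1})\simeq\ast$; as $\mathcal{F}_\infty(L_k)=st(b_k)\cup\mathcal{F}_\infty(L_k-b_k)$ is a union of two contractible subcomplexes meeting in $lk(b_k)$, this gives $\mathcal{F}_\infty(L_k)\simeq\Sigma\,lk_{\mathcal{F}_\infty(L_k)}(b_k)$. Adding $b_k$ to a set can only create the cycle $Q_{k-1}$, so separating the sets that use $t_k$ from those that do not yields
\[
lk_{\mathcal{F}_\infty(L_k)}(b_k)=\mathcal{F}_\infty(L_{k-1})\cup\bigl(\{t_k\}\ast M_{k-1}\bigr),
\]
the two pieces meeting in $M_{k-1}$, where $M_j:=\{\sigma\in\mathcal{F}_\infty(L_j):\{t_j,b_j\}\not\subseteq\sigma\}$. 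The second piece is a cone, so $lk(b_k)\simeq\mathcal{F}_\infty(L_{k-1})/M_{k-1}$.

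Two further identifications close the loop. First, $M_j=\mathcal{F}_\infty(L_j-t_j)\cup\mathcal{F}_\infty(L_j-b_j)$ is a union of two cones on $\mathcal{F}_\infty(L_{j-1})$ glued along $\mathcal{F}_\infty(L_{j-1})$, so $M_j\cong\mathbb{S}^0\ast\mathcal{F}_\infty(L_{j-1})=\Sigma\mathcal{F}_\infty(L_{j-1})$. Second, $\mathcal{F}_\infty(L_{k-1})=M_{k-1}\cup st(\{t_{k-1},b_{k-1}\})$, where the star is the cone $\{t_{k-1},b_{k-1}\}\ast M_{k-2}$ and the two pieces meet in $\mathbb{S}^0\ast M_{k-2}=\Sigma M_{k-2}$. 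Since a subcomplex inclusion $A\hookrightarrow X$ with $X$ contractible has $X/A\simeq\Sigma A$ (the case $X\simeq Y\simeq\ast$ of Lemma \ref{homocolimpegado}), this gives $\mathcal{F}_\infty(L_{k-1})/M_{k-1}\simeq\Sigma^2 M_{k-2}\cong\Sigma^3\mathcal{F}_\infty(L_{k-3})$, hence $\mathcal{F}_\infty(L_k)\simeq\Sigma^4\mathcal{F}_\infty(L_{k-3})$ for all $k\geq 4$. For the base cases, $L_1=K_2$ has $\delta=1$, so $\mathcal{F}_\infty(L_1)\simeq\ast$; $L_2=C_4$ gives $\mathcal{F}_\infty(L_2)\cong\mathbb{S}^2$; and $L_3$ is a $6$-cycle with one chord, so $\mathcal{F}_\infty(L_3)\cong\mathbb{S}^3$ by Proposition \ref{propciclocuerda}. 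Iterating then yields $\mathcal{F}_\infty(L_{3r})\simeq\Sigma^{4(r-1)}\mathbb{S}^3=\mathbb{S}^{4r-1}$, $\mathcal{F}_\infty(L_{3r+1})\simeq\Sigma^{4r}\ast=\ast$, and $\mathcal{F}_\infty(L_{3r+2})\simeq\Sigma^{4r}\mathbb{S}^2=\mathbb{S}^{4r+2}$, as claimed.

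The main obstacle is bookkeeping rather than depth. One must check that each union above is genuinely a union of subcomplexes meeting in a subcomplex, so that the homotopy-colimit facts and the ``collapse a contractible subcomplex'' moves from the preliminaries apply; one must identify the auxiliary complexes $M_j$ correctly, both as suspensions of $\mathcal{F}_\infty(L_{j-1})$ and as the specific links and deletions in which they arise; and one must confirm that the recursion is already valid at $k=4$, so that $k=1,2,3$ are the only base cases needed. No individual step is hard, but the answer is sensitive to gaining exactly four suspensions each time a column is removed, which is where care is required.
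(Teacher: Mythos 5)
Your argument is correct, and it takes a genuinely different route from the paper. The paper's proof never touches links or deletions: it notes that the facets of the Alexander dual are the complements of the $k-1$ unit squares, identifies the nerve of that cover with the Alexander dual of $\mathcal{F}_0(P_{k-1})$-type data, imports the known homotopy type of independence complexes of paths, applies Theorem \ref{dualidadalexander} to locate the single nontrivial homology group, and then uses simple connectivity (girth $4$, Theorem \ref{coneccuello}) together with Theorem \ref{cwsimplcon} to conclude. You instead prove an explicit periodicity $\mathcal{F}_\infty(P_2\oblong P_k)\simeq\Sigma^4\mathcal{F}_\infty(P_2\oblong P_{k-3})$ by elementary star/link/deletion decompositions, the join identity $M_j\cong\mathbb{S}^0\ast\mathcal{F}_\infty(L_{j-1})$, and cone/quotient collapses, with base cases $k=1,2,3$ (the last via Proposition \ref{propciclocuerda}); I checked the key identifications ($lk(b_k)=\mathcal{F}_\infty(L_{k-1})\cup(\{t_k\}\ast M_{k-1})$ meeting in $M_{k-1}$, $st(\{t_{k-1},b_{k-1}\})=\{t_{k-1},b_{k-1}\}\ast M_{k-2}$ meeting $M_{k-1}$ in $\mathbb{S}^0\ast M_{k-2}$, and the minimal non-faces being exactly the squares $Q_j$), and they are all right, so the four suspensions per deleted column come out correctly and the three residues match the claimed answer. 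Two small remarks: your passage from $\mathcal{F}_\infty(L_{k-1})/M_{k-1}$ to $\Sigma^2M_{k-2}$ silently uses the excision homeomorphism $(A\cup B)/A\cong B/(A\cap B)$ before applying the ``contractible ambient complex'' collapse; this is standard for subcomplexes, and you could avoid quotients altogether by observing that $lk(b_k)=st(\{t_{k-1},b_{k-1}\})\cup(\{t_k\}\ast M_{k-1})$ is a union of two cones meeting in $\Sigma M_{k-2}$, which gives $lk(b_k)\simeq\Sigma^2M_{k-2}$ directly by Lemma \ref{homocolimpegado}. Comparing the two approaches: yours is self-contained at the level of homotopy pushouts, needs no Alexander duality, no prior knowledge of $\mathcal{F}_0(P_k)$, and no separate simple-connectivity or Whitehead-theorem step (so it also bypasses any wedge-recognition issue), and it yields the stronger structural statement of $\Sigma^4$-periodicity; the paper's proof is shorter given the tools already developed there and showcases the duality-plus-nerve technique that it reuses for cactus graphs.
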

\begin{proof}
By Theorem \ref{coneccuello}, $\mathcal{F}_\infty\left(P_2\oblong P_k\right)$ is simply connected. We will show 
that it has at most one non-trivial reduced homology group. The Alexander dual of 
$\mathcal{F}_\infty\left(P_2\oblong P_k\right)$ has as maximal simplicies the complements of $X_i=\{(i,1),(i+1,1),(i,2),(i+1,2)\}$ 
for $1\leq i\leq k-1$. Taking $U_i=X_i^c$ and $U$ the cover formed by these $U_i$, we have that
$$\mathcal{N}(U)\simeq\mathcal{F}_0^*(P_k).$$
It is standard that \citep{kozlovdire}:
$$\mathcal{F}_0(P_k)\simeq\left\lbrace\begin{array}{cc}
\mathbb{S}^{r-1} & \mbox{ if } k=3r \\
* & \mbox{ if } k=3r+1 \\
\mathbb{S}^{r} & \mbox{ if } k=3r+2.
\end{array}
\right.$$
Thus, by Theorem \ref{dualidadalexander}, $\mathcal{N}(U)$ has non-trivial reduced cohomology groups if $k=3r$ or $k=3r+2$, 
in which case the groups are in dimensions  
are $2(r-1)$ and $2r-1$ respectively. Therefore $\mathcal{F}_\infty\left(P_2\oblong P_k\right)$ is contractible if 
$k=3r+1$ and 
$$\tilde{H}_{q}\left(\mathcal{F}_\infty\left(P_2\oblong P_{3r}\right)\right)\cong\left\lbrace\begin{array}{cc}
\mathbb{Z} & \mbox{ if } q=4r-1 \\
0 &  \mbox{ if } q\neq4r-1,
\end{array}
\right.$$
$$\tilde{H}_{q}\left(\mathcal{F}_\infty\left(P_2\oblong P_{3r+2}\right)\right)\cong\left\lbrace\begin{array}{cc}
\mathbb{Z} & \mbox{ if } q=4r+2 \\
0 &  \mbox{ if } q\neq4r+2.
\end{array}
\right.$$
By Theorem \ref{cwsimplcon}, in these cases the complex is homotopy equivalent to a sphere of the desired dimension.
\end{proof}

It is known \cite{homotopygoyal} that 

$$\mathcal{F}_0(K_n\times K_m)\simeq\bigvee_{(n-1)(m-1)}\mathbb{S}^1.$$
Now we will show what happens for $d\geq1$.
\begin{prop}
$$\mathcal{F}_1(K_n\times K_m)\simeq\bigvee_{\frac{(nm-4)(n-1)(m-1)}{4}}\mathbb{S}^2$$
\end{prop}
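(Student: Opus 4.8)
The plan is to describe $\mathcal{F}_1(K_n\times K_m)$ combinatorially, exhibit it as the independence complex $\mathcal{F}_0(K_n\times K_m)$ with a family of tetrahedra glued along $4$-cycles, read off its homology from the long exact sequence of that pair, prove it is simply connected by a direct computation with the standard edge-path presentation of $\pi_1$, and conclude with Theorem~\ref{cwsimplcon}. Throughout I identify $V(K_n\times K_m)$ with the cells $(i,j)$ of an $n\times m$ grid; two cells are adjacent exactly when they differ in both coordinates, and I write $\mathcal{F}_d:=\mathcal{F}_d(K_n\times K_m)$.

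The first step is to determine the simplices. A set $\sigma$ lies in $\mathcal{F}_1$ iff $G[\sigma]$ has maximum degree at most $1$, i.e.\ every connected component of $G[\sigma]$ has at most two vertices. If $G[\sigma]$ contains an edge $\{(i,j),(i',j')\}$ (so $i\ne i'$ and $j\ne j'$), this edge is a whole component, so every other cell $(p,q)\in\sigma$ is adjacent to neither endpoint; the conditions ``$p=i$ or $q=j$'' and ``$p=i'$ or $q=j'$'' leave only $(i,j')$ and $(i',j)$, so $\sigma$ lies in the $2\times2$ subgrid $Q$ on rows $\{i,i'\}$, columns $\{j,j'\}$. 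Conversely $G$ restricted to the four cells of any $2\times2$ subgrid is a perfect matching, so every subset of such a subgrid is a simplex. Hence, writing $\Delta(Q)$ for the full $3$-simplex on the cells of a $2\times2$ subgrid $Q$,
$$\mathcal{F}_1=\mathcal{F}_0\cup\bigcup_Q\Delta(Q)$$
over the $\binom n2\binom m2$ subgrids, where $\Delta(Q)\cap\mathcal{F}_0$ is the $4$-cycle $C_Q$ formed by the four pairs of cells of $Q$ that share a coordinate, and $\Delta(Q)\cap\Delta(Q')\subseteq C_Q\cap C_{Q'}\subseteq\mathcal{F}_0$ for $Q\ne Q'$ (distinct $2\times2$ subgrids share at most two cells, necessarily sharing a coordinate, hence an edge of each $C_Q$). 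Note that Lemma~\ref{homocolimpegado} is not directly applicable, as the inclusions $C_Q\hookrightarrow\mathcal{F}_0$ need not be null-homotopic.

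For homology I would use the pair $(\mathcal{F}_1,\mathcal{F}_0)$. By the above, $\mathcal{F}_1/\mathcal{F}_0\cong\bigvee_Q\Delta(Q)/C_Q$, and since $\Delta(Q)$ is contractible, $\Delta(Q)/C_Q\simeq\Sigma C_Q\simeq\mathbb{S}^2$; thus $H_k(\mathcal{F}_1,\mathcal{F}_0)\cong\mathbb{Z}^{\binom n2\binom m2}$ for $k=2$ and $0$ otherwise. Using $\mathcal{F}_0\simeq\bigvee_{(n-1)(m-1)}\mathbb{S}^1$ (see \citep{homotopygoyal}), the long exact sequence of the pair gives $H_k(\mathcal{F}_1)=0$ for $k\ge3$ and an exact sequence $0\to H_2(\mathcal{F}_1)\to\mathbb{Z}^{\binom n2\binom m2}\xrightarrow{\ \partial\ }\mathbb{Z}^{(n-1)(m-1)}\to H_1(\mathcal{F}_1)\to0$. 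It therefore remains to show $H_1(\mathcal{F}_1)=0$; then $\partial$ is surjective, $H_2(\mathcal{F}_1)\cong\ker\partial$ is free of rank $\binom n2\binom m2-(n-1)(m-1)=(n-1)(m-1)\bigl(\tfrac{nm}{4}-1\bigr)=\tfrac{(nm-4)(n-1)(m-1)}{4}$, and $\mathcal{F}_1$ has the homology of the asserted wedge of $2$-spheres.

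The crux is to show $\pi_1(\mathcal{F}_1)=0$ (which also gives $H_1(\mathcal{F}_1)=0$ and makes $\mathcal{F}_1$ simply connected, since its $1$-skeleton is the complete graph on $[n]\times[m]$ and hence $\mathcal{F}_1$ is connected). I would use the presentation of $\pi_1$ arising from the spanning tree equal to the star at $o=(1,1)$: generators are the edges $\{a,b\}$ with $a,b\ne o$, edges through $o$ are trivial, and each $2$-simplex $\{a,b,c\}$ yields $(ab)(bc)(ca)=1$. The $2$-simplices are the triples spanning at most two rows and two columns, together with the independent triples. Using triangles of the first kind one checks, in order, that the following edge-generators are trivial: (i) those with both endpoints in row $1$, or both in column $1$, or one in row $1$ and one in column $1$, or one in row $1$ and one in the same column, or one in column $1$ and one in the same row (each via a triangle containing $o$ whose other two edges pass through $o$); (ii) all remaining edges with an endpoint in row $1$ or column $1$ (for a row-$1$ endpoint, via the triangle $\{(1,j),(1,j'),(i,j')\}$, whose other two edges are of type (i)); (iii) edges between two cells lying off row $1$ and column $1$ and sharing a row or column (via $\{(i,j),(i,j'),(1,j)\}$, reducing to (i) and (ii)); (iv) edges between two such cells differing in both coordinates (via $\{(i,j),(i',j'),(i,j')\}$, reducing to (iii)). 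Since (i)--(iv) exhaust all edge-generators, $\pi_1(\mathcal{F}_1)=0$, and Theorem~\ref{cwsimplcon} gives $\mathcal{F}_1(K_n\times K_m)\simeq\bigvee_{(nm-4)(n-1)(m-1)/4}\mathbb{S}^2$. The only genuinely delicate part is organizing this case analysis so that every edge-generator reduces, through some triangle relation, to generators already shown to be trivial.
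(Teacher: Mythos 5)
Your proof is correct, but it takes a genuinely different route from the paper. The paper argues by induction on $n$ (with an inner induction on $m$ for $n=2$), deleting the vertices $(n,i)$ one at a time, identifying each link as homotopy equivalent to $\mathcal{F}_1$ of a smaller complete bipartite graph (a wedge of circles), and assembling the answer from iterated suspensions and wedges; this keeps everything inside the $\mathcal{F}_1$-machinery already developed (Lemma~\ref{lemjoingraf}) and never needs the independence complex of $K_n\times K_m$. You instead give a global, non-inductive argument: the clean observation that every simplex containing an edge lies in a $2\times2$ subgrid, so $\mathcal{F}_1=\mathcal{F}_0\cup\bigcup_Q\Delta(Q)$ with $\Delta(Q)\cap\mathcal{F}_0=C_Q$ a $4$-cycle and pairwise intersections of the $\Delta(Q)$ inside $\mathcal{F}_0$; hence $\mathcal{F}_1/\mathcal{F}_0\simeq\bigvee_Q\mathbb{S}^2$, and the long exact sequence of the pair, together with $\mathcal{F}_0(K_n\times K_m)\simeq\bigvee_{(n-1)(m-1)}\mathbb{S}^1$ (quoted from the reference the paper itself cites just before the proposition), reduces everything to $H_1(\mathcal{F}_1)=0$. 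Your edge-path computation of $\pi_1$ with the star at $(1,1)$ as spanning tree is the same technique the paper uses elsewhere (the girth proposition and Lemma~\ref{cactussimpcon}), and your case chain (i)--(iv) is exhaustive and each cited triangle is indeed a simplex, so the argument closes via Theorem~\ref{cwsimplcon}; the rank count $\binom n2\binom m2-(n-1)(m-1)=\tfrac{(nm-4)(n-1)(m-1)}{4}$ is right. What your approach buys is a transparent structural description and an explicit reason for simple connectivity, at the cost of importing the known homotopy type of $\mathcal{F}_0(K_n\times K_m)$; the paper's induction is self-contained in that respect but bookkeeping-heavy. Two cosmetic points: in step (iii) you should also record the symmetric triangle $\{(i,j),(i',j),(i,1)\}$ for a shared column, and you may note that in the degenerate cases ($n\le 2$ or $m\le 2$ with $nm\le 4$) the formula gives an empty wedge, consistent with $\mathcal{F}_1$ being a full simplex there.
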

\begin{proof}
We take $V(K_r)=[r]-\{0\}$ for any $r$.
We proceed by induction on $n$. For $n=1$, the result is clear. 
For $n=2$ we will prove it by induction on $m$. For $m=1,2$ it is clear and for $m=3$, $K_2\times K_3\cong C_6$. 
We take $m$ and assume that the result is true for any $k\leq m-1$.
Taking $v_i=(1,i)$ and $u_i=(2,i)$, we have that
$\mathrm{lk}(v_m)=X\cup Y$, where 
$Y=\mathcal{F}_1(K_2\times K_m)-N[v_m]$ and $X$ is the complex with facets $\{u_i,v_i,u_m\}$ for $i\geq m-1$. Then $X\simeq*$, as it is a cone with apex $u_m$, and $X\cap Y\cong K_{i,m}\simeq*$. Therefore, 
$$\mathrm{lk}(v_n)\simeq Y\cong\mathcal{F}_1(K_{1,m-1})\simeq\bigvee_{m-2}\mathbb{S}^1.$$
Taking $H=K_2\times K_m-v_m$, the link of $u_m$ in $\mathcal{F}_1(H)$ has as facets the simplex $\{u_1,\dots,u_{m-1}\}$ and 
the edges $\{u_i,v_i\}$ for $i\geq m-1$, therefore it is contractible and 
$$\mathcal{F}_1(H)\simeq\mathcal{F}_1(H-u_m)\cong\mathcal{F}_1(K_2\times K_{m-1})\simeq\bigvee_{\frac{(m-2)(m-3)}{2}}\mathbb{S}^2,$$
from which the result follows. 

Now assume the result is true for $K_r\times K_m$ for all $r\leq n-1$. Take 
$v_i=(n,i)$, $G_0=K_n\times K_m$, $G_i=G_{i-1}-v_i$ for $i\geq1$,
$X_{j,k}^i=|\{(j,k),(j,i),(n,k)\}|$ for $k\geq i+1$ and $j\leq n-1$, 
$X_{j,k}^i=|\{(j,k),(j,i)\}|$ for $k\leq i-1$ and $j\leq n-1$,
$$X^i=\bigcup_{k\neq i,\;j\leq n-1}X_{j,k}^i$$
and $Y^i=\mathcal{F}_1(G_{i-1}-N[v_i])$. Then, taking $L_{i}$ the link of $v_{i}$ in 
$\mathcal{F}_1(G_{i-1})$, we have that
$$L_i=X^i\cup Y^i.$$
Now, in $X^i$, the vertices $(j,k)$ with $j\leq n-1$ and $k\neq i$ are only in one facet and can be erased, therefore 
$X^i$ is homotopy equivalent to the subcomplex with maximal facets $\{(j,i),(n,k)\}$ with $k\geq i+1$ and $j\leq n-1$, which is isomorphic 
to $K_{n-1,m-i}$. Because $X^i\cap Y_i$ is isomorphic to this subcomplex, we have that 
$$L_i\simeq Y^i\cong \mathcal{F}_1(K_{n-i,m-1})\simeq\bigvee_{(m-1)(n-i)-1}\mathbb{S}^1$$
for $i\leq n-1$. Now, $L_n\simeq Y^n\simeq*$, therefore
$$\mathcal{F}_1(G_{n-1})\simeq\mathcal{F}_1(G_n)\cong\mathcal{F}_1(K_{n-1}\times K_{m})\simeq\bigvee_{\frac{((n-1)m-4)(n-2)(m-1)}{4}}\mathbb{S}^2.$$
From this we have that 
$$\mathcal{F}_1(G_0)\simeq\mathcal{F}_1(K_{n-1}\times K_{m})\vee\Sigma Y^1\vee\Sigma Y^2\vee\cdots\vee\Sigma Y^{n-1}.$$
Now $\Sigma Y^1\vee\Sigma Y^2\vee\cdots\vee\Sigma Y^{n-1}$ is homotopy equivalent to the wedge of
$$\sum_{i=1}^{m-1}i(n-1)-1=\frac{(n-1)m(m-1)}{2}-(m-1)$$
copies of the $2$-sphere. Since
$$\frac{((n-1)m-4)(n-2)(m-1)}{4}=\sum_{i=1}^{n-2}\frac{im(m-1)}{2}-(m-1),$$
we have that $\mathcal{F}_1(K_n\times K_m)$ is homotopy equivalent to the wedge of 
$$\sum_{i=1}^{n-1}\frac{im(m-1)}{2}-(m-1)=\frac{(nm-4)(n-1)(m-1)}{4}$$
$2$-spheres.
\end{proof}

\begin{figure}
\centering
\subfigure[]{\begin{tikzpicture}[line cap=round,line join=round,>=triangle 45,x=1.0cm,y=1.0cm]
\clip(-1.5,-0.2) rectangle (2,3.2);
\draw (-1,3)-- (1,2);
\draw (-1,3)-- (1,0);
\begin{scriptsize}
\fill [color=black] (-1,3) circle (1.5pt);
\fill [color=black] (1,3) circle (1.5pt);
\fill [color=black] (1,2) circle (1.5pt);
\draw [color=black] (1.2,2) node {$1$};
\draw [color=black] (1,1.5) node {$\vdots$};
\draw [color=black] (1,1) node {$\vdots$};
\draw [color=black] (1,0.5) node {$\vdots$};
\fill [color=black] (1,0) circle (1.5pt);
\draw [color=black] (1.5,0) node {$d+1$};
\end{scriptsize}
\end{tikzpicture}\label{simd2}}
\subfigure[]{\begin{tikzpicture}[line cap=round,line join=round,>=triangle 45,x=1.0cm,y=1.0cm]
\clip(-1.5,-0.2) rectangle (2,3.2);
\draw (-1,3)-- (1,2);
\draw (-1,3)-- (1,0);
\begin{scriptsize}
\fill [color=black] (-1,3) circle (1.5pt);
\fill [color=black] (1,2) circle (1.5pt);
\draw [color=black] (1.2,2) node {$1$};
\draw [color=black] (1,1.5) node {$\vdots$};
\draw [color=black] (1,1) node {$\vdots$};
\draw [color=black] (1,0.5) node {$\vdots$};
\fill [color=black] (1,0) circle (1.5pt);
\draw [color=black] (1.5,0) node {$d+1$};
\end{scriptsize}
\end{tikzpicture}\label{simd1}}
\caption{}
\end{figure}

 \begin{lem}
 For $d\geq2$, $\mathcal{F}_{d+1}(K_2\times K_n)\simeq\mathcal{F}_d(K_2\times K_n)$
 \end{lem}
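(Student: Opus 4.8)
The plan is to understand $G=K_2\times K_n$ concretely and then pin down exactly which simplices are gained in passing from $\mathcal{F}_d(G)$ to $\mathcal{F}_{d+1}(G)$, after which a collapsing argument finishes the job. With $V(K_2)=\{1,2\}$ and $V(K_n)=[n]$, the graph $G$ has vertex set $\{1,2\}\times[n]$ and $(i,j)(i',j')\in E(G)$ iff $i\neq i'$ and $j\neq j'$ (the crown graph), so swapping the two copies of $[n]$ is an automorphism of $G$ and I may reason up to this symmetry. Write a vertex subset as $S=(\{1\}\times A)\cup(\{2\}\times B)$ with $A,B\subseteq[n]$; then $G[S]$ is bipartite, the vertex $(1,a)$ has neighbourhood $\{2\}\times(B\setminus\{a\})$, and $(2,b)$ has neighbourhood $\{1\}\times(A\setminus\{b\})$. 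Since a forest on at most $d+1$ vertices has maximum degree at most $d$, the $d$-skeleta of $\mathcal{F}_d(G)$ and $\mathcal{F}_{d+1}(G)$ coincide, so every simplex of $\mathcal{F}_{d+1}(G)\setminus\mathcal{F}_d(G)$ induces a forest of maximum degree exactly $d+1$; as $d\geq2$, it has a vertex of degree at least $3$.

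The structural step is the heart of the proof. Suppose $G[S]$ is a forest with $|A|,|B|\geq2$. For $a_1\neq a_2$ in $A$, the common neighbourhood of $(1,a_1)$ and $(1,a_2)$ is $\{2\}\times(B\setminus\{a_1,a_2\})$, so being a forest forces $|B\setminus\{a_1,a_2\}|\leq1$, hence $|B|\leq3$, and symmetrically $|A|\leq3$; moreover, if some $(1,a)$ had degree $\geq3$ then $|B\setminus\{a\}|\geq3$, giving $|B|=3$ and $a\notin B$, while picking $a'\in A\setminus\{a\}$ yields $1=|B\setminus\{a,a'\}|\geq|B|-2=1$ only with $\{a,a'\}\subseteq B$ — contradicting $a\notin B$. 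Hence a forest $G[S]$ with $|A|,|B|\geq2$ has maximum degree at most $2$. Consequently a simplex of $\mathcal{F}_{d+1}(G)\setminus\mathcal{F}_d(G)$ must, after the side-swap if needed, satisfy $|A|=1$, say $A=\{a\}$, and then $(1,a)$ is its only vertex of degree $\geq 2$ and $|B\setminus\{a\}|=d+1$. So the new simplices are precisely the sets $\sigma_{a,B}=\{(1,a)\}\cup(\{2\}\times B)$ with $|B\setminus\{a\}|=d+1$, together with their mirror images: the stars $K_{1,d+1}$ (the case $a\notin B$, $|B|=d+1$, Figure~\ref{simd1}), and the stars $K_{1,d+1}$ with one extra isolated vertex $(2,a)$ (the case $a\in B$, $|B|=d+2$, Figure~\ref{simd2}).

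Now I pair them: match each star $\sigma_{a,B}$ (with $a\notin B$, $|B|=d+1$) to $\sigma_{a,B}\cup\{(2,a)\}=\sigma_{a,B\cup\{a\}}$, and do the mirror on the other side. Each $\sigma_{a,B}$ is a free face of $\sigma_{a,B\cup\{a\}}$ in $\mathcal{F}_{d+1}(G)$: since $\sigma_{a,B}\notin\mathcal{F}_d(G)$, no simplex of $\mathcal{F}_d(G)$ contains it, so any simplex properly containing it is one of the new simplices, hence of dimension $d+2$; and testing which vertex $v$ may be added to $\sigma_{a,B}$ to stay in $\mathcal{F}_{d+1}(G)$ shows $v=(2,a)$ is the only option — adding a second side-$1$ vertex produces a cycle because $d\geq2$, and adding any other side-$2$ vertex raises the degree of $(1,a)$ to $d+2$. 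Distinct stars have distinct partners, and removing one pair by an elementary collapse leaves the free-face status of every other unremoved star intact; performing all of these collapses (on both sides) deletes exactly the simplices of $\mathcal{F}_{d+1}(G)\setminus\mathcal{F}_d(G)$. Therefore $\mathcal{F}_{d+1}(K_2\times K_n)\searrow\mathcal{F}_d(K_2\times K_n)$, and in particular the two complexes are homotopy equivalent; when $n\leq d+1$ there are no new simplices and the statement is trivial.

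The step I expect to be the main obstacle is the structural classification in the second paragraph: one must be certain the list of new simplices is exhaustive, i.e. that no induced forest of the crown graph has a subtler shape carrying a vertex of degree $d+1$. Once that is secured, identifying the free faces and carrying out the collapse is routine.
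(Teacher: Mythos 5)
Your proof is correct, but it takes a genuinely different route from the paper's. Both arguments share the same combinatorial core, namely the identification of the simplices of $\mathcal{F}_{d+1}(K_2\times K_n)\setminus\mathcal{F}_d(K_2\times K_n)$ as stars $K_{1,d+1}$ centered on one side, possibly together with the extra isolated vertex $(2,a)$ (exactly the configurations of Figures \ref{simd1} and \ref{simd2}); your structural lemma (an induced forest meeting both sides in at least two vertices has maximum degree at most $2$) packages this more uniformly than the paper's dimension-by-dimension case analysis for $q\geq d+3$, $q=d+2$, $q=d+1$, and note only that the degree-$\geq 3$ argument, written for side-$1$ vertices, covers side-$2$ vertices via the swap automorphism you invoke at the outset. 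Where you diverge is the topological mechanism: the paper shows $H_q(\mathcal{F}_{d+1}(K_2\times K_n),\mathcal{F}_d(K_2\times K_n))\cong 0$ for all $q$ (with the $q=d+1,d+2$ step only sketched), uses simple connectivity coming from $\mathcal{F}_1(K_2\times K_n)$ being a wedge of $2$-spheres, and concludes by Whitehead's theorem; you instead match each new $(d+1)$-simplex $\sigma_{a,B}$ with the new $(d+2)$-simplex $\sigma_{a,B}\cup\{(2,a)\}$, verify that this is a free-face pairing whose pairs are disjoint, and collapse $\mathcal{F}_{d+1}(K_2\times K_n)$ onto $\mathcal{F}_d(K_2\times K_n)$. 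Your route yields a stronger conclusion (a deformation retraction, indeed a simple homotopy equivalence) without any appeal to simple connectivity or Whitehead, and your perfect matching of the new simplices is in effect a rigorous version of the very step the paper leaves informal, the vanishing of the relative cycles in dimensions $d+1$ and $d+2$.
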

\begin{proof}
We know that $\mathcal{F}_d(K_2\times K_n)$ is simply connected for all $d\geq2$, because 
$\mathcal{F}_1(K_2\times K_n)$ is a wedge of $2$-spheres.
We will show that $H_q(\mathcal{F}_{d+1}(K_2\times K_n),\mathcal{F}_{d}(K_2\times K_n))\cong0$ for all $q$. 
We know that $H_q(\mathcal{F}_{d+1}(K_2\times K_n),\mathcal{F}_{d}(K_2\times K_n))\cong0$ for all $q\leq d$. 
For $q\geq d+3$, for any $q$-simplex $\sigma$ of $\mathcal{F}_{d+1}(K_2\times K_n)$, we can partition its 
vertices in two sets $V_1,V_2$ such that all the vertices in $V_i$ are of the form $(i,j)$ for some $j$. Next we show that
$|V_1|=0$ or $|V_2|=0$. If not, we can assume that
$$|V_1|\leq\left\lfloor\frac{d+3}{2}\right\rfloor\leq\left\lceil\frac{d+3}{2}\right\rceil\leq|V_2|$$
therefore $|V_2|\geq3$; there are several cases: 
\begin{itemize}
    \item If $|V_1|=1$, then $|V_2|\geq d+3$ and the vertex of $V_1$ has degree at least $d+2$, which can not 
    happen.
    \item If $|V_1|=2$, then $|V_2|\geq d+2$ and there will be at least two vertices of $V_2$ such their second 
    coordinates are different from those of the vertices of $V_1$; therefore there will be an induced $4$-cycle 
    in the vertices of $\sigma$, which can not happen. 
    \item If $|V_1|\geq3$, because $|V_2|\geq3$, there will be an induced $4$-cycle or an induced 
    $6$-cycle in the vertices of $\sigma$, which can not happen.
\end{itemize}
Therefore $|V_1|=0$ or $|V_2|=0$ and $\sigma$ is a simplex of $\mathcal{F}_{d}(K_2\times K_n)$. From this, 
we have that $H_q(\mathcal{F}_{d+1}(K_2\times K_n),\mathcal{F}_{d}(K_2\times K_n))\cong0$ for all $q\geq d+3$.

For $q=d+2$, the only $q$-simplices of ${F}_{d+1}(K_2\times K_n)$ which are not simplices of $\mathcal{F}_{d}(K_2\times K_n)$
are of the form $|V_1|=1$ and $|V_2|=d+2$ (or vice versa), where the only vertex of $V_1$ is adjacent to all but one vertex of 
$V_2$ (Figure \ref{simd2}). For $q=d+1$, the only $q$-simplices of ${F}_{d+1}(K_2\times K_n)$ which are not simplices 
of $\mathcal{F}_{d}(K_2\times K_n)$ are of the form $|V_1|=1$ and $|V_2|=d+1$ (or vice versa), where the only vertex of $V_1$ is adjacent 
to all the vertices of $V_2$ (Figure \ref{simd1}). From all this, we get that there are no relative $d+2$-cycles and 
that all of the relative $d+1$-cycles are images of some relative $d+2$-boundary. Therefore the remaining two 
relative homology groups are also trivial. 

From all this we have that the inclusion $\mathcal{F}_{d}(K_2\times K_n)\longhookrightarrow\mathcal{F}_{d+1}(K_2\times K_n)$
induces an isomorphism for all homology groups between simply connected complexes, therefore 
$\mathcal{F}_{d+1}(K_2\times K_n)\simeq\mathcal{F}_{d}(K_2\times K_n)$.
\end{proof}

\begin{prop}
For $d\geq2$,
$$\mathcal{F}_d(K_2\times K_n)\simeq\bigvee_{\binom{n}{3}}\mathbb{S}^4\vee\bigvee_{\binom{n-1}{3}}\mathbb{S}^3.$$
\end{prop}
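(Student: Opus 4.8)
The plan is to reduce everything to a single complex. By the preceding lemma, $\mathcal{F}_d(K_2\times K_n)\simeq\mathcal{F}_2(K_2\times K_n)$ for every $d\geq2$, and since $\mathcal{F}_e(K_2\times K_n)=\mathcal{F}_\infty(K_2\times K_n)$ once $e$ is larger than the largest degree occurring in an induced forest (here $K_{1,n-1}$ sits induced, $K_{1,n}$ does not), iterating the lemma gives $\mathcal{F}_2(K_2\times K_n)\simeq\mathcal{F}_\infty(K_2\times K_n)$. So it is enough to determine the homotopy type of $\mathcal{F}_\infty(K_2\times K_n)$, which carries the cleanest combinatorics (arbitrary induced forests). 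Recall from the proof of the preceding lemma that this complex is simply connected. Hence, by Theorem \ref{gradconse}, it suffices to show that $\tilde{H}_i(\mathcal{F}_\infty(K_2\times K_n))$ is trivial for $i\notin\{3,4\}$, is free of rank $\binom{n}{3}$ for $i=4$, and is free of rank $\binom{n-1}{3}$ for $i=3$.

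To compute this homology I would use Alexander duality (Theorem \ref{dualidadalexander}). Write $G=K_2\times K_n$ with its natural bipartition $A\sqcup B$, $A=\{(1,j)\}$, $B=\{(2,j)\}$, so that $(1,j)(2,k)\in E(G)$ iff $j\neq k$. The minimal non-faces of $\mathcal{F}_\infty(G)$ are exactly the vertex sets of induced cycles, and a short case analysis (using that $G$ is bipartite of this special form) shows these are precisely the induced $4$-cycles $\{(1,a),(1,b),(2,c),(2,d)\}$ with $\{a,b\}\cap\{c,d\}=\emptyset$ together with the induced $6$-cycles $\{(1,a),(1,b),(1,c),(2,a),(2,b),(2,c)\}$; one checks there are no induced cycles of length $\geq 8$. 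Therefore $\mathcal{F}_\infty^*(G)$ is the union of the full simplices spanned by the complements of these vertex sets, and since any intersection of full simplices is again a full simplex (or empty) the nerve lemma identifies $\mathcal{F}_\infty^*(G)$, up to homotopy, with the nerve $\mathcal{N}$ whose faces are the families of induced cycles whose vertex sets fail to cover $V(G)$. One then computes $\tilde{H}^*(\mathcal{N})$ and transports the answer back through Theorem \ref{dualidadalexander}. The case $n=3$ is a sanity check: $K_2\times K_3\cong C_6$ and $\mathcal{F}_\infty(C_6)\cong\mathbb{S}^4$, matching $\binom{3}{3}=1$ and $\binom{2}{3}=0$.

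A second route, closer in spirit to the $\mathcal{F}_1$ computation above, is induction on $n$: peel off the vertex $v=(1,n)$, so that $\mathcal{F}_\infty(G)\simeq\hocolim\big(st(v)\longleftarrow lk(v)\longrightarrow\mathcal{F}_\infty(G-v)\big)$, observe that $G-v$ is $K_2\times K_{n-1}$ together with one extra vertex adjacent to all of one side of the bipartition, decompose again via the link of that vertex, and iterate, checking that the relevant inclusions are null-homotopic so that Lemma \ref{homocolimpegado} applies, with base case $C_6$. Either way, the main obstacle is the combinatorial bookkeeping: separating the two contributions cleanly — the $\binom{n}{3}$ copies of $\mathbb{S}^4$, which morally come from the induced $6$-cycles, and the $\binom{n-1}{3}$ copies of $\mathbb{S}^3$ — and, in the inductive route, pinning down the homotopy type of each intermediate link. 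Once the homology has been identified, Theorem \ref{gradconse} (or, degree by degree after the vanishing is established, Theorem \ref{cwsimplcon}) completes the proof.
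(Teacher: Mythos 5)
Your reductions at the start are fine and essentially match the paper's: the preceding lemma gives $\mathcal{F}_d(K_2\times K_n)\simeq\mathcal{F}_2(K_2\times K_n)$ for $d\geq2$ (the paper simply works with $d=2$), simple connectivity is available from that lemma's proof, and Theorem \ref{gradconse} is the right closing step. Your identification of the minimal non-faces of $\mathcal{F}_\infty(K_2\times K_n)$ (induced $4$-cycles $\{(1,a),(1,b),(2,c),(2,d)\}$ with $\{a,b\}\cap\{c,d\}=\emptyset$, induced $6$-cycles on $\{(1,a),(1,b),(1,c),(2,a),(2,b),(2,c)\}$, and nothing longer) is also correct. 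The problem is that both of your routes stop exactly where the actual work begins. In the Alexander duality route, the sentence ``one then computes $\tilde{H}^*(\mathcal{N})$'' hides the entire content of the proposition: $\mathcal{N}$ is a nerve on $\binom{n}{2}\binom{n-2}{2}+\binom{n}{3}$ vertices (one per induced cycle), and via Theorem \ref{dualidadalexander} you would need to show its reduced cohomology is free, concentrated in the two high degrees $2n-7$ and $2n-6$, with ranks $\binom{n}{3}$ and $\binom{n-1}{3}$ respectively. No argument is offered for this, and it is not a routine bookkeeping step; nothing in the paper's toolkit (Lemma \ref{lemhomotopiagraf}, Lemma \ref{homocolimpegado}) applies directly to a cover with this many pieces and non-contractible multiple intersections. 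The same criticism applies to your second route: you acknowledge that ``pinning down the homotopy type of each intermediate link'' is the obstacle, but that is precisely what a proof must do, and the links here are not covered by the earlier path/cycle computations.

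For comparison, the paper avoids the dual altogether: it covers $\mathcal{F}_2(K_2\times K_n)$ by the $\binom{n}{3}$ subcomplexes $X_i\cong\mathbb{S}^4$ coming from the induced $6$-cycles together with the two full simplices on $\{1\}\times\underline{n}$ and $\{2\}\times\underline{n}$, and runs the Mayer--Vietoris spectral sequence of this cover. The key computation is that the nerve of the cover is the suspension of the nerve of the $2$-simplices of $sk_2\Delta^{n-1}$, hence homotopy equivalent to $\bigvee_{\binom{n-1}{3}}\mathbb{S}^3$, and that all higher intersections are contractible or empty so the sequence degenerates at $E^2$, giving $\mathbb{Z}^{\binom{n}{3}}$ in degree $4$ and $\mathbb{Z}^{\binom{n-1}{3}}$ in degree $3$. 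If you want to salvage your outline, you need to supply an analogous concrete computation (for instance, an explicit homotopy-equivalence or discrete Morse argument for your nerve $\mathcal{N}$); as written, the proposal is a plan rather than a proof.
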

\begin{proof}
We only need to show it for $d=2$.
The result is clear for $n=1,2,3$. Assume $n\geq4$. Taking 
$\displaystyle k=\binom{n}{3}$, let $X_1,\dots,X_k$ be the subcomplexes of $\mathcal{F}_2(K_2\times K_n)$ corresponding 
to all the induced $6$-cycles. Then $X_i\cong\mathbb{S}^4$. The other facets of $\mathcal{F}_2(K_2\times K_n)$, besides the ones 
in some $X_i$, are $\{1\}\times\underline{n}$ and $\{2\}\times\underline{n}$. Then 
$$\mathcal{F}_2(K_2\times K_n)=X_1\cup X_2\cup\cdots\cup X_k\cup Y_1\cup Y_2$$
where $Y_1=\mathcal{P}(\{1\}\times\underline{n})$ and 
$Y_2=\mathcal{P}(\{2\}\times\underline{n})$. Now we will calculate the homology of $\mathcal{F}_2(K_2\times K_n)$ 
using the Mayer-Vietoris spectral sequence. Taking $U=\{X_1,X_2,\dots,X_k,Y_1,Y_2\}$ and $\mathcal{U}=\mathcal{N}(U)$, 
the first page of the sequence is 
\begin{equation*}
    \xymatrix{
    \mathbb{Z}^k \ar@{<-}[r] & 0 \ar@{<-}[r] & 0 \ar@{<-}[r] & 0 \ar@{<-}[r] & 0\\
    0 \ar@{<-}[r] &  0 \ar@{<-}[r] & 0 \ar@{<-}[r] & 0 \ar@{<-}[r] & 0\\
    0 \ar@{<-}[r] &  0 \ar@{<-}[r] & 0 \ar@{<-}[r] & 0 \ar@{<-}[r] & 0\\
    0 \ar@{<-}[r] &  0 \ar@{<-}[r] & 0 \ar@{<-}[r] & 0 \ar@{<-}[r] & 0\\
    C_0(\mathcal{U}) \ar@{<-}[r] & C_1(\mathcal{U}) \ar@{<-}[r] & C_2(\mathcal{U}) \ar@{<-}[r] & C_3(\mathcal{U}) \ar@{<-}[r] & C_4(\mathcal{U})
    }
\end{equation*}
Because the nerve of $X_1,X_2,\dots,X_k$ is isomorphic to the nerve of $2$-simplices of $\mathrm{sk}_2\Delta^{n-1}$, 
and $\mathcal{U}$ is isomorphic to the suspension of this nerve, we have that the second page is
\begin{equation*}
    \xymatrix{
    \mathbb{Z}^k \ar@{<-}[rrd] & 0 \ar@{<-}[rrd] & 0 \ar@{<-}[rrd] & 0  & 0\\
    0 \ar@{<-}[rrd] &  0 \ar@{<-}[rrd] & 0 \ar@{<-}[rrd] & 0  & 0\\
    0 \ar@{<-}[rrd] &  0 \ar@{<-}[rrd] & 0 \ar@{<-}[rrd] & 0  & 0\\
    0 \ar@{<-}[rrd] &  0 \ar@{<-}[rrd] & 0 \ar@{<-}[rrd] & 0 & 0\\
    \mathbb{Z}  & 0 & 0 & \mathbb{Z}^{r} & 0
    }
\end{equation*}
where $r=\binom{n-1}{3}$. From this we have that $E_{p,q}^\infty=E_{p,q}^2$. Therefore 
$$\tilde{H}_q(\mathcal{F}_2(K_2\times K_n))\cong\left\lbrace
\begin{array}{cc}
    \mathbb{Z}^k & \mbox{ if } q=4 \\
    \mathbb{Z}^r & \mbox{ if } q=3 \\
    0 & \mbox{ if } q\neq4,3 \\
\end{array}\right.$$
Therefore, because  $\mathcal{F}_1(K_2\times K_n)$ is simply connected, $\mathcal{F}_2(K_2\times K_n)$ is a simply connected complex which 
satisfies the hypothesis of Theorem \ref{gradconse}, from which we see that is has the desired homotopy type.
\end{proof}

\begin{theorem}
For $d\geq2$,
$$\mathcal{F}_d(K_n\times K_m)\simeq\bigvee_a\mathbb{S}^4\vee\bigvee_{b+c}\mathbb{S}^3,$$
where $a=\binom{m}{2}\binom{n}{3}+\binom{n}{2}\binom{m}{3}$, 
$b=\binom{m}{2}\binom{n-1}{3}+\binom{n}{2}\binom{m-1}{3}$ and $c=\binom{n-1}{2}\binom{m-1}{2}$.
\end{theorem}
\begin{proof}
In $\mathcal{F}_d(K_n\times K_m)$ the facets have their vertices contained in two rows or two columns, otherwise they will have 
a cycle. Then, taking the subgraphs
$$H_{i,j}=K_n\times K_m[\{(k,l)\colon\;l=i\mbox{ or }l=j\}],$$
$$G_{i,j}=K_n\times K_m[\{(k,l)\colon\;k=i\mbox{ or }k=j\}],$$
and the complexes $X_{i,j}=\mathcal{F}_d(H_{i,j})$, $Y_{i,j}=\mathcal{F}_d(G_{i,j})$, we have that
$$\mathcal{F}_d(K_n\times K_m)=\bigcup_{e\in E(K_m)}X_e\cup\bigcup_{e\in E(K_n)}Y_e$$
From the last Proposition we know that  
$$X_e\simeq\bigvee_{\binom{n}{3}}\mathbb{S}^4\vee\bigvee_{\binom{n-1}{3}}\mathbb{S}^3$$
$$Y_e\simeq\bigvee_{\binom{m}{3}}\mathbb{S}^4\vee\bigvee_{\binom{m-1}{3}}\mathbb{S}^3$$
Taking the Mayer-Vietoris spectral sequence, the first 
page looks like
\begin{equation*}
    \xymatrix{
    \mathbb{Z}^a \ar@{<-}[r] & 0 \ar@{<-}[r] & 0 \ar@{<-}[r] & 0 \ar@{<-}[r] & 0\\
    \mathbb{Z}^b \ar@{<-}[r] &  0 \ar@{<-}[r] & 0 \ar@{<-}[r] & 0 \ar@{<-}[r] & 0\\
    0 \ar@{<-}[r] &  0 \ar@{<-}[r] & 0 \ar@{<-}[r] & 0 \ar@{<-}[r] & 0\\
    0 \ar@{<-}[r] &  0 \ar@{<-}[r] & 0 \ar@{<-}[r] & 0 \ar@{<-}[r] & 0\\
    C_0(\mathcal{U}) \ar@{<-}[r] & C_1(\mathcal{U}) \ar@{<-}[r] & C_1(\mathcal{U}) \ar@{<-}[r] & C_3(\mathcal{U}) \ar@{<-}[r] & C_4(\mathcal{U}) 
    }
\end{equation*}
Where $\mathcal{U}$ is the nerve of the cover, 
$a=\binom{m}{2}\binom{n}{3}+\binom{n}{2}\binom{m}{3}$ and $b=\binom{m}{2}\binom{n-1}{3}+\binom{n}{2}\binom{m-1}{3}$. 
Now, $\mathcal{U}$ is isomorphic to the join of the nerve of the $X'$s with the nerve of the $Y'$s, which are 
homotopy equivalent to $K_m$ and $K_n$ respectively, therefore $\mathcal{U}\simeq\bigvee_{c}\mathbb{S}^3$ with 
$c=\binom{n-1}{2}\binom{m-1}{2}$. From all this, we have that the second page of the sequence is
\begin{equation*}
    \xymatrix{
    \mathbb{Z}^a \ar@{<-}[rrd] & 0 \ar@{<-}[rrd] & 0 \ar@{<-}[rrd] & 0 & 0 \\
    \mathbb{Z}^b \ar@{<-}[rrd] &  0 \ar@{<-}[rrd] & 0 \ar@{<-}[rrd] & 0 & 0 \\
    0 \ar@{<-}[rrd] &  0 \ar@{<-}[rrd] & 0 \ar@{<-}[rrd] & 0 & 0 \\
    0 \ar@{<-}[rrd] &  0 \ar@{<-}[rrd] & 0 \ar@{<-}[rrd] & 0 & 0 \\
    \mathbb{Z}  & 0 & 0 & \mathbb{Z}^c & 0 
    }
\end{equation*}
Therefore $E_{p,q}^\infty=E_{p,q}^2$ and
$$\tilde{H}_q(\mathcal{F}_d(K_n\times K_m))\cong\left\lbrace
\begin{array}{cc}
    \mathbb{Z}^a & \mbox{ if } q=4 \\
    \mathbb{Z}^{b+c} & \mbox{ if } q=3 \\
    0 & \mbox{ if } q\neq4,3 \\
\end{array}\right.$$
As in the proof of the last theorem, we have a simply connected complex which satisfies the hypothesis of Theorem \ref{gradconse}.
\end{proof}

In \citep{indcomplcartprod} it was shown that
$$\mathcal{F}_0(K_2\times K_m \times K_n)\simeq\bigvee_{\frac{(n-1)(m-1)(nm-2)}{2}}\mathbb{S}^3.$$
For $d\geq1$, because $K_2\times K_2\cong K_2\sqcup K_2$ we have the following corollary
\begin{cor}
For $d\geq1$,
    $$\mathcal{F}_d(K_2\times K_2\times K_n)\simeq\left\lbrace\begin{array}{cc}
      \displaystyle\bigvee_{\frac{(n-2)^2(n-1)^2}{4}}\mathbb{S}^5  &  d=1\\
        \displaystyle\bigvee_{\binom{n}{3}^2}\mathbb{S}^9\vee\bigvee_{2\binom{n}{3}\binom{n-1}{3}}\mathbb{S}^8\vee\bigvee_{\binom{n-1}{3}^2}\mathbb{S}^7 & d\geq2
    \end{array}\right.$$
\end{cor}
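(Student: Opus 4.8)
The plan is to reduce this three-fold categorical product to a join of two complexes whose homotopy type has already been determined. First I would record the elementary observation that $K_2\times K_2\cong K_2\sqcup K_2$: writing $V(K_2)=\{1,2\}$, the edge condition for the categorical product forces the only edges of $K_2\times K_2$ to be $(1,1)(2,2)$ and $(1,2)(2,1)$, which form two disjoint copies of $K_2$. Since the categorical product distributes over disjoint unions, $K_2\times K_2\times K_n\cong(K_2\times K_n)\sqcup(K_2\times K_n)$, and the lemma on disjoint unions gives
$$\mathcal{F}_d(K_2\times K_2\times K_n)=\mathcal{F}_d(K_2\times K_n)*\mathcal{F}_d(K_2\times K_n).$$

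Next I would substitute the homotopy types computed earlier in this subsection. For $d=1$ the $n=2$ case of the formula for $\mathcal{F}_1(K_n\times K_m)$ gives $\mathcal{F}_1(K_2\times K_n)\simeq\bigvee_{(n-1)(n-2)/2}\mathbb{S}^2$, and for $d\geq2$ we have $\mathcal{F}_d(K_2\times K_n)\simeq\bigvee_{\binom{n}{3}}\mathbb{S}^4\vee\bigvee_{\binom{n-1}{3}}\mathbb{S}^3$. It then remains to join such a wedge of spheres with itself. I would use the standard facts $\mathbb{S}^a*\mathbb{S}^b\cong\mathbb{S}^{a+b+1}$ together with the homotopy distributivity of the join over wedge sums of CW complexes (which follows from $X*Y\simeq\Sigma(X\wedge Y)$ and the distributivity of $\wedge$ and $\Sigma$ over $\vee$), obtaining the identity $\left(\bigvee_i\mathbb{S}^{a_i}\right)*\left(\bigvee_j\mathbb{S}^{b_j}\right)\simeq\bigvee_{i,j}\mathbb{S}^{a_i+b_j+1}$.

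Applying this identity for $d=1$ yields a wedge of $\bigl((n-1)(n-2)/2\bigr)^2=(n-1)^2(n-2)^2/4$ copies of $\mathbb{S}^5$. For $d\geq2$, joining the $\binom{n}{3}$ copies of $\mathbb{S}^4$ and $\binom{n-1}{3}$ copies of $\mathbb{S}^3$ in one factor with those in the other produces $\binom{n}{3}^2$ copies of $\mathbb{S}^9$ from the $\mathbb{S}^4*\mathbb{S}^4$ pairs, $2\binom{n}{3}\binom{n-1}{3}$ copies of $\mathbb{S}^8$ from the mixed pairs, and $\binom{n-1}{3}^2$ copies of $\mathbb{S}^7$ from the $\mathbb{S}^3*\mathbb{S}^3$ pairs, which is exactly the asserted homotopy type.

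I do not anticipate a genuine obstacle: the only things to be careful about are checking that the categorical product distributes over disjoint union and keeping the bookkeeping straight in the join-of-wedges formula (in particular that the mixed $\mathbb{S}^4*\mathbb{S}^3$ terms occur twice). For small $n$ (e.g.\ $n\leq 2$) all the relevant binomial coefficients vanish and both sides are contractible, which handles the degenerate cases.
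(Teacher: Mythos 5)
Your proposal is correct and follows essentially the same route the paper intends: the identification $K_2\times K_2\cong K_2\sqcup K_2$, distribution of the categorical product over disjoint union, the lemma $\mathcal{F}_d(G_1\sqcup G_2)=\mathcal{F}_d(G_1)*\mathcal{F}_d(G_2)$, and substitution of the previously computed homotopy types of $\mathcal{F}_1(K_2\times K_n)$ and $\mathcal{F}_d(K_2\times K_n)$ together with the join-of-wedges-of-spheres identity. Your bookkeeping (including the factor $2$ on the mixed $\mathbb{S}^4*\mathbb{S}^3$ terms and the count $\bigl((n-1)(n-2)/2\bigr)^2$ for $d=1$) matches the stated formulas exactly.
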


\begin{que}
What is the homotopy type of $K_2\times K_m \times K_n$ for $d\geq1$?
\end{que}

\section{Connections to other results}
In this section we will give an upper bound for $\nabla_d$ using rational cohomology. We defined $\nabla_d(G)$ 
as the minimum number of vertices needed to erase from $G$ to obtain a forest of maximum degree at most $d$, and it 
is equal to the minimum number of vertices such that its complement is a simplex of $\mathcal{F}_d(G)$ --it can also can be seen as the 
minimum number of a transversal of the minimal non-simplices of $\mathcal{F}_d(G)$. Recall that if $G$ has $n$ vertices, then 
$n=\nabla_d(G)+\mathrm{dim}(\mathcal{F}_d(G))+1=\nabla_d(G)+t_d(G)$.

We start by giving some definitions. 
For a simplicial complex $K$, we define
$$\mathrm{hrk}(K,\mathbb{Q})=\sum_{i=-1}\mathrm{dim}\left(\tilde{H}^{i}(K,\mathbb{Q})\right),$$
where $\tilde{H}^{-1}(K)$ is $\mathbb{Q}$ if $K=\{\emptyset\}$, and it is $0$ otherwise; we take $K[\emptyset]=\{\emptyset\}$ if 
$K$ is non-empty.  

It is worth mentioning that there is a lower bound for the decycling number using the rational homology of the independence complex.
\begin{theorem}\citep{engstromupperbounds}
For any graph $G$
$$|\tilde{\chi}(\mathcal{F}_0(G))|\leq\sum_{i=0}\mathrm{dim}\left(\tilde{H}_i(\mathcal{F}_0(G),\mathbb{Q})\right)\leq2^{\nabla(G)}$$
\end{theorem}

The following theorem was proved in the context of the toral rank conjecture for the moment angle complex of a simplicial complex
\citep{caomobius,ustinovtoralrank}. We only need the following inequality and will not define the moment-angle complex nor state the 
toral rank conjecture (see \citep{panovtorictopology}).

\begin{theorem}\citep[Theorem 4.8.9]{panovtorictopology}\label{toralrank}
For any simplicial complex $K$ on $n$ vertices of dimension $k-1$
$$2^{n-k}\leq\sum_{S\subseteq V(K)}\mathrm{hrk}(K[S],\mathbb{Q})$$
\end{theorem}

Since the dimension of  $\mathcal{F}_d(G)$ is $t_d(G)-1$ and $\nabla_d(G)=n-t_d(G)$, Theorem \ref{toralrank} implies the following result.

\begin{prop}\label{propdecy}
For any graph $G$
$$\nabla_d(G)\leq\mathrm{log}_2\left(\sum_{S\subseteq V(G)}\mathrm{hrk}\left(\mathcal{F}_d(G)[S],\mathbb{Q}\right)\right)$$
\end{prop}

As a consequence, we get a lower bound for the independence number.
\begin{cor}
For any graph of order $n$
$$\alpha(G)\geq n-\mathrm{log}_2\left(\sum_{S\subseteq V(G)}\mathrm{hrk}\left(\mathcal{F}_0(G)[S],\mathbb{Q}\right)\right)$$    
\end{cor}
For $d=\infty$, we get the following.
\begin{cor}
For any graph $G$ of order $n$
$$\nabla(G)\leq\mathrm{log}_2\left(1+\sum_{r=g(G)}^n\sum_{S\in\binom{V(G)}{r}}\mathrm{hrk}\left(\mathcal{F}_\infty(G)[S],\mathbb{Q}\right)\right)$$
\end{cor}
Notice that if $G$ is a forest, the last bound is $0$, so in this case the bound is tight. We now provide some examples to
Proposition \ref{propdecy}. In the first three examples the bound also will be tight.
\begin{exa}
For $M_q$, the graph form by $q$-disjoint edges, we have that $\mathcal{F}_0(M_q)\cong\mathbb{S}^{q-1}$ and for any vertex set $S$ we have
that $\mathcal{F}_0(M_q[S])\cong\mathbb{S}^{\frac{|S|}{2}-1}$ if $M_q[S]\cong M_{|S|}$ and $\mathcal{F}_0(M_q[S])\simeq*$ otherwise. Then
$\tau(M_q)\leq q$. It is easy to see that in fact $\tau(M_q)=q$.  
\end{exa}

\begin{exa}
Take $G_r$ as the disjoint union of $r$ cycles of any length and $H_r$ as the graph obtain from $G_r$ by adding a new vertex $v$ 
which will be adjacent to one vertex of each cycle. Then $\mathcal{F}_\infty(G_r)$ has the homotopy type of a sphere and 
$\mathcal{F}_\infty(H_r)$ is contractible, but for both graphs the upper bound is $r$ which is the decycling number of both graphs.
\end{exa}

\begin{exa}
For $C_n$ and $d\geq2$ we have that 
$$\sum_{S\subseteq V(C_n)}\mathrm{hrk}(\mathcal{F}_d(C_n)[S],\mathbb{Q})=2=2^{\nabla(C_n)}.$$
\end{exa}

\begin{exa}
For $K_{n,m}$ and $n,m\geq2$ we get
$$\mathrm{log}_2\left(1+\sum_{i=2}^n\sum_{j=2}^m(i-1)(j-1)\binom{n}{i}\binom{m}{j}\right)\geq\nabla(K_{n,m})=\min\{n-1,m-1\}$$
If $n=m=2$ the bound is tight, but it is not tight if $m\geq3$ and $n=2$ for example.
\end{exa}

Recall that a graph $G$ is called \textit{ternary} if all its induced cycles have length not divisible by $3$. The \textit{Fibonacci number} 
of a graph $G$ is defined as the number of independent sets in $G$, that we denote by $NI(G)$. Thus 
$$NI(G)=\sum_{i=-1}^{\alpha(G)-1}f_i\left(\mathcal{F}_0(G)\right)$$
We now give an upper bound for the Fibonacci numbers of ternary graphs. This bound is given in terms of combinatorial parameters only, while 
the proof makes use of algebraic topological arguments.

\begin{prop}\label{fibotern}
If $G$ is a ternary graph, then
$$NI(G)\leq2^{n}-2^{\tau(G)}+1$$
\end{prop}
\begin{proof}
Let $G$ be a ternary graph of order $n$. 
It is known that a graph is ternary if and only if, for any induced subgraph, its independence complex is contractible or has the homotopy 
type of a sphere \citep{kimternary}. Thus 
$$2^{\tau(G)}\leq\sum_{S\subseteq V(K)}\mathrm{hrk}(K[S],\mathbb{Q})\leq2^n+1-NI(G)$$
The result now follows easily. 
\end{proof}

Notice that a forest is a ternary graph, then we obtain the following corollary.
\begin{cor}
Let $G$ be a forest of order $n$, then:
\begin{enumerate}
    \item\citep{linfibo} $NI(G)\leq2^{n}$.
    \item If $G$ is a tree of order $n\geq2$ and $\mathrm{diam}(G)=r$, then $NI(G)\leq2^{n}-2^{\left\lfloor\frac{r}{2}\right\rfloor}+1$.
\end{enumerate}
\end{cor}
\begin{proof}
The first part is obvious from the bound in Proposition \ref{fibotern}. The second part follows from the fact that if $G$
is a tree of diameter $r$, then $\tau(G)\geq\left\lfloor\frac{r}{2}\right\rfloor$.
\end{proof}
Notice that the first bound is tight as the complement of the complete graph shows, thus the bound of 
Proposition \ref{fibotern} is tight. 

\begin{que}
Are there other families of ternary graphs for which the bound of Proposition \ref{fibotern} is tight?
\end{que}

As a final remark, note that the bound of Proposition \ref{propdecy} can be generalized to transversal numbers of hypergraphs.
If $H$ is an hypergraph such that it does not have an hyperedge contain in another hyperedge, then the set of hyperedges can be seen 
as the set of minimal non-simplicies of a simplicial complex. The dimension of this complex will be $\alpha(H)-1$ and $n-\alpha(H)$ is the 
transversal number of the hypergraph, where $n$ is its order. Thus we obtain the corresponding bound for the transversal number.
 
\bibliographystyle{acm}
\bibliography{complejobosques}




\end{document}